\documentclass[10pt]{article}
\usepackage{amsmath, amsthm, amssymb, amsfonts, mathrsfs, mathtools, upgreek}
\usepackage{graphicx}
\usepackage{makeidx}
\usepackage{tikz-network}
\usepackage[left=2cm,right=2cm,top=2cm,bottom=2cm]{geometry}
\usepackage{caption}
\usepackage{subcaption}
\usepackage[section]{placeins}
\usepackage{enumitem}
\usepackage{tikz}
\usepackage{stmaryrd}
\usepackage{hyperref}
\usepackage{upquote}


\usepackage{hyperref}
\hypersetup{colorlinks=true, citecolor={blue}, linkcolor=blue, filecolor=magenta, urlcolor=cyan}



\newtheorem{theorem}{Theorem}[section]

\newtheorem{lema}[theorem]{Lemma}
\newtheorem{corollary}{Corollary}[theorem]
\newtheorem{defi}[theorem]{Definition}

\numberwithin{equation}{section}
\newcommand{\ip}[2]{\left\langle #1, #2 \right\rangle}
\newcommand{\divides}{\mid}
\newcommand{\aut}{\operatorname{Aut}}

\newcommand{\cay}{\operatorname{Cay}}

\newcommand{\jb}{\mathbf{j}}

\def \Zl {{\mathbb Z}}
\def \Nl {{\mathbb N}}
\def \Rl {{\mathbb R}}
\def \Zl {{\mathbb Z}}
\def \Ql {{\mathbb Q}}
\def \Cl {{\mathbb C}}

\def \ld {{\lambda}}

\def \eu {{\mathbf{e}_u}}
\def \ev {{\mathbf{e}_v}}
\def \evt  {{\mathbf{e}^t_v}}
\def \eut  {{\mathbf{e}^t_u}}
\def \Gn {\mathcal{G}_{\Zl_n}}

\textheight=22cm \textwidth=16cm \oddsidemargin=0.2in
\evensidemargin=0.2in \topmargin=-0.25in

\title{Pretty good state transfer in Grover walks on abelian Cayley graphs}

\author{ Koushik Bhakta and Bikash Bhattacharjya\\
	Department of Mathematics\\
	Indian Institute of Technology Guwahati, India\\
	b.koushik@iitg.ac.in, b.bikash@iitg.ac.in }

\date{}
\begin{document}
	\maketitle
	
	\vspace{-0.3in}
	
	\begin{center}{\textbf{Abstract}}\end{center}
	\noindent 
	In this paper, we study pretty good state transfer (PGST) in Grover walks on graphs. We consider transfer of quantum states that are localized at the vertices of a graph and we use Chebyshev polynomials to analyze PGST between such states. In general, we find a necessary and sufficient condition for the occurrence of PGST on graphs. We then focus our analysis on abelian Cayley graphs  and derive a necessary and sufficient condition for the occurrence of PGST on such graphs. Consequently, we obtain a complete characterization of PGST on unitary Cayley graphs. Our results yield infinite families of graphs that exhibit PGST but fail to exhibit perfect state transfer.
	
	\vspace*{0.3cm}
	\noindent 
	\textbf{Keywords.} Pretty good state transfer, Grover walk, Chebyshev polynomial, Cayley graph \\
	\textbf{Mathematics Subject Classifications:} 05C50, 81Q99
	
	\section{Introduction}
	Quantum walks \cite{random} are the quantum analogues of classical random walks. It emerges as a powerful tool in quantum cryptography~\cite{crypto}, quantum information processing~\cite{information}, and the development of quantum algorithms~\cite{algorithm}. This paper focuses on discrete-time quantum walks on graphs. A discrete-time quantum walk is governed by a unitary operator that evolves a quantum state in discrete steps (see \cite{godsildct, zhan4}). The Grover walk~\cite{spec}, a discrete-time quantum walk based on the Grover diffusion operator, has attracted considerable attention due to its intriguing properties, including localization~\cite{localization} and periodicity~\cite{regular}. One of the most fascinating features of quantum walks is their potential for state transfer, notably perfect state transfer (PST) and pretty good state transfer (PGST). Perfect state transfer occurs when a quantum state moves perfectly from one state to another at a fixed time. In contrast, PGST relaxes this strict condition, requiring only the transfer fidelity to be made arbitrarily close to perfect. The existence of PST is strict and rare in many graph families. However, PGST is more flexible and easier to achieve. This makes PGST a better option for real-world quantum communication. The concept of PGST in quantum walks was first proposed by Godsil \cite{state}. The existence of PGST in continuous-time quantum walk has been well studied (see \cite{pal_pgst2, pal_pgst1} and references therein).

	The theory of PGST in discrete-time quantum walks was initiated by Chan and Zhan~\cite{pgst1}, who developed a general framework and provided infinite families of graphs exhibiting PGST. Subsequently, Zhan~\cite{pgst2} constructed a discrete-time quantum walk model under which every hypercube exhibits PGST. In this paper, we investigate PGST in Grover walks on graphs, with a focus on Cayley graphs over abelian groups. Perfect state transfer in Grover walks has been investigated in several works, including~\cite{bhakta4, kubota_pst1,kubota_pst3,kubota_pst2}, where structural and spectral properties of graphs were used to characterize PST. The periodicity of Grover walks has been examined in~\cite{ito, mixedpaths, odd-grover, distance}. More recently, Guo and Schmeits~\cite{guo_peak} introduced the notion of peak state transfer in discrete-time quantum walks. In~\cite{zhan5}, Zhan studied uniform mixing in discrete-time quantum walks. Additional studies on state transfer in discrete-time quantum walks can be found in~\cite{guo_orient,dqw3,zhan1}, which explore various graph classes and walk models.

	This paper makes several significant contributions to the study of PGST in Grover walks.  By employing Chebyshev polynomials (Lemma~\ref{dpgst}) and analyzing the spectral properties of the discriminant matrix (Theorems~\ref{ch_pgst2} and~\ref{pgst_graphs}), we establish necessary and sufficient condition for the occurrence of  PGST on graphs. We also investigate the connection between graph automorphisms and PGST (Lemmas \ref{aut_pgst1} and \ref{aut_pgst2}). Further, we derive a necessary and sufficient condition for the occurrence of PGST on Cayley graphs over finite abelian groups (Theorem~\ref{pgst_cayley}). This result uses the symmetry and eigenvalues of these graphs, demonstrating the role of group characters in state transfer. We also provide an infinite family of Cayley graphs that exhibit PGST (Lemma~\ref{inf_cayley}). Finally, we provide a complete characterization of PGST on unitary Cayley graphs (Theorem~\ref{pgst_unitary}), showing that PGST occurs if and only if the order of the graph $n$ is of the form $2m$ or $4m$, where $m$ is an odd square-free integer. This result contrasts with PST, which is rare and occurs only for $n\in\{2,4,6,12\}$.

	The remaining part of the paper is structured as follows. Section 2 provides the necessary background of Grover walks and introduces the formal definitions of PST and PGST. Section 3 establishes the general characterization of PGST on graphs, including spectral and automorphism-based conditions. Section 4 investigates PGST on abelian Cayley graphs. Finally, Section 5 focuses on unitary Cayley graphs, where we completely characterize PGST on such graphs.
	
	\section{Backgrounds}
	Let $G$ be a finite simple graph with vertex set $V(G)$ and edge set $E(G)$. We denote the elements of $E(G)$ by the unordered pairs $uv$ of distinct elements of $V(G)$. The ordered pairs $(u,v)$ and $(v,u)$ are known as the \emph{arcs} of $G$. The set of  symmetric arcs of $G$ is defined as $\mathcal{A}(G)=\{(u, v), (v, u):uv\in E(G) \}$. For an arc $a:=(u,v)$, the origin and terminus of $a$ are defined by $o(a)=u$ and $t(a)=v$, respectively. The inverse arc of $a$, denoted $a^{-1}$, is the arc $(v,u)$.  
	
	We define the following fundamental matrices associated with Grover walks on a graph $G$. We refer the reader to \cite{kubota_pst2} for more details about these matrices. The \emph{shift matrix} $R:=R(G)\in \mathbb{C}^{\mathcal{A}(G) \times \mathcal{A}(G)}$ of $G$ is defined by $R_{ab}=\delta_{a,b^{-1}}$,  where $\delta_{a,b}$ is the Kronecker delta function. The shift matrix satisfies $R^2=I$, where $I$ is the identity matrix. The \emph{boundary matrix} $N:=N(G)\in \mathbb{C}^{V(G)\times \mathcal{A}(G)}$ of $G$ is defined by $N_{ua}=\frac{1}{\sqrt{\deg u}}\delta _{u, t(a)}$, where $\deg u$ is the degree of the vertex $u$. It follows that $NN^*=I$. The \emph{coin matrix} $C:=C(G)\in\mathbb{C}^{\mathcal{A}(G) \times \mathcal{A}(G)}$ of $G$ is defined by $C=2N^*N-I$. Finally, the \emph{time evolution matrix} $U:=U(G)\in \mathbb{C}^{\mathcal{A}(G)\times \mathcal{A}(G)}$ of $G$ is defined by $$U=RC.$$ 
	A discrete-time quantum walk on a graph $G$ is governed by a unitary matrix acting on complex-valued functions over the set of  symmetric arcs of $G$. The discrete-time quantum walk defined by the unitary matrix $U$ is known as the \emph{Grover walk}.  The Grover walk is also recognized as a special case of the bipartite walk (see \cite{chen}).

	The spectral analysis of the time evolution matrix $U$ of a graph $G$ can be simplified by examining the spectrum of a reduced matrix, called the \emph{discriminant matrix} $P:=P(G)\in \mathbb{C}^{V(G)\times V(G)}$, defined as $P=NSN^*.$ The \emph{adjacency matrix} of a graph $ G $ is $A := A(G) \in \mathbb{C}^{V(G) \times V(G)}$, where the $uv$-th entry of $A$ is equal to 1 if $uv\in E(G)$, and 0 otherwise. In the case of regular graphs, the discriminant matrix is correlated to the adjacency matrix. 
	\begin{lema}[{\cite[Theorem~3.1]{mixed1}}]
		Let $G$ be an $r$-regular graph. Then $P=\frac{1}{r}A$.
	\end{lema}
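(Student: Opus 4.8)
The plan is to compute both sides of the identity $P = NSN^*$ directly from the definitions of the boundary matrix $N$ and the shift matrix $S = R$, and verify that the result equals $\frac{1}{r}A$ entrywise. First I would fix two vertices $u, v \in V(G)$ and write out the $uv$-th entry of $NRN^*$ as a double sum over arcs: $(NRN^*)_{uv} = \sum_{a, b \in \mathcal{A}(G)} N_{ua} R_{ab} (N^*)_{bv} = \sum_{a,b} N_{ua} R_{ab} \overline{N_{vb}}$. Using $N_{ua} = \frac{1}{\sqrt{\deg u}}\delta_{u, t(a)}$ and the fact that these entries are real, this becomes $\sum_{a,b} \frac{1}{\sqrt{\deg u}}\delta_{u, t(a)} \, \delta_{a, b^{-1}} \, \frac{1}{\sqrt{\deg v}}\delta_{v, t(b)}$.

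The key step is to collapse the double sum using the two Kronecker deltas. The factor $R_{ab} = \delta_{a, b^{-1}}$ forces $b = a^{-1}$, which removes the sum over $b$ and leaves a single sum over arcs $a$. For each surviving term we then need $u = t(a)$ and $v = t(b) = t(a^{-1}) = o(a)$; that is, the arc $a$ must run from $v$ to $u$, i.e. $a = (v, u)$. Since $G$ is simple, there is exactly one such symmetric arc when $uv \in E(G)$ and none otherwise. Hence the sum reduces to a single term equal to $\frac{1}{\sqrt{\deg u}}\cdot\frac{1}{\sqrt{\deg v}}$ precisely when $uv \in E(G)$, and is $0$ otherwise. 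Writing this out, $(NRN^*)_{uv} = \frac{1}{\sqrt{\deg u \, \deg v}} A_{uv}$.

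Finally I would invoke $r$-regularity: $\deg u = \deg v = r$ for all vertices, so the prefactor $\frac{1}{\sqrt{\deg u \,\deg v}}$ equals $\frac{1}{r}$ uniformly across all pairs $(u,v)$. Thus $(NRN^*)_{uv} = \frac{1}{r} A_{uv}$ for every $u, v$, which gives $P = NSN^* = \frac{1}{r} A$ as claimed.

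I do not anticipate a genuine obstacle here, as the argument is a routine entrywise bookkeeping computation; the only point requiring care is correctly tracking the origin/terminus swap induced by the shift matrix, namely that $R$ sends the terminus condition at $b = a^{-1}$ into an origin condition at $a$, so that the two deltas jointly pin down the unique arc $(v,u)$. It is worth stating explicitly that simplicity of $G$ guarantees uniqueness of this arc, so that no multiplicity factors appear.
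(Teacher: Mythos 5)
Your computation is correct and complete. There is, however, nothing in the paper to compare it against: the paper does not prove this lemma, but simply imports it from \cite[Theorem~3.1]{mixed1}, so your entrywise verification supplies a proof that the paper omits. Your argument is the standard one, and it is carried out properly: the factor $R_{ab}=\delta_{a,b^{-1}}$ collapses the double sum over arcs, the identity $t(a^{-1})=o(a)$ converts the terminus condition on $b$ into an origin condition on $a$, and simplicity of $G$ guarantees that the surviving arc $(v,u)$ is unique, so no multiplicity appears. It is worth observing that before you invoke regularity you have already proved the stronger, fully general statement $(NRN^*)_{uv}=A_{uv}/\sqrt{\deg u\,\deg v}$ for an arbitrary graph, i.e.\ that $P=D^{-1/2}AD^{-1/2}$ is the normalized adjacency matrix (with $D$ the degree matrix); the lemma is just its specialization to $\deg u\equiv r$. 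One cosmetic point you handled correctly: the paper defines the discriminant as $P=NSN^*$ without ever defining $S$, and your reading of $S$ as the shift matrix $R$ is the intended one.
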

	Kubota and Segawa \cite{kubota_pst1} established a connection between the matrices $N$, $P$ and $U$ using Chebyshev polynomials. The \emph{Chebyshev polynomial of the first kind}, denoted $T_m(x)$, is the polynomial defined by the  recurrence relation:
	$T_0(x)=1,~ T_1(x)=x~ \text{and} ~T_m(x)=2xT_{m-1}(x)-T_{m-2}(x) ~\text{for}~ m \geq 2.$ It is well known that $T_m(\cos\theta)=\cos(m\theta)$, and so $|T_m(x)|\leq 1$ for $|x|\leq 1$.
	\begin{lema}[{\cite[Lemma~3.1]{kubota_pst1}}] \label{ch11}
		Let $T_m(x)$ be the Chebyshev polynomial of the first kind.	Let $G$ be a graph with the time evolution matrix $U$ and discriminant $P$. Then $NU^m N^* = T_m (P)$ for any positive integer $m$. 
	\end{lema}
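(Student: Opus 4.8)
The plan is to prove the identity by induction on $m$, showing that the matrices $M_m := NU^mN^*$ satisfy exactly the three-term recurrence that defines the Chebyshev polynomials. Since $T_0(x)=1$, $T_1(x)=x$, and $T_m(x)=2xT_{m-1}(x)-T_{m-2}(x)$, the matrices $T_m(P)$ inherit the same recurrence with $x$ replaced by $P$. It therefore suffices to verify the base cases $M_0=I=T_0(P)$ and $M_1=P=T_1(P)$, together with the operator recurrence $M_m = 2P\,M_{m-1}-M_{m-2}$ for $m\ge 2$. The whole argument is then purely algebraic, driven by a handful of relations among $N$, $R$, $C$, and $U=RC$.

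First I would record the elementary identities that make everything collapse. From $NN^*=I$ and $C=2N^*N-I$ one gets $NC = 2(NN^*)N-N = N$ and $CN^*=2N^*(NN^*)-N^*=N^*$; the same relation gives $2N^*N=C+I$, and from $U=RC$ together with $R^2=I$ we obtain $RU=R^2C=C$. Using $CN^*=N^*$, the base-case computation reads $M_1 = N(RC)N^* = NR(CN^*)=NRN^*=P$, which simultaneously confirms $M_1=T_1(P)$ and pins down the discriminant as $P=NRN^*$ (so the shift matrix in $P=NSN^*$ is $S=R$); the case $m=0$ is immediate since $M_0=NN^*=I=T_0(P)$.

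The core step is the recurrence. Expanding $2P\,M_{m-1} = 2(NRN^*)(NU^{m-1}N^*) = NR(2N^*N)U^{m-1}N^* = NR(C+I)U^{m-1}N^*$ and splitting the two terms, the $C$-term gives $N(RC)U^{m-1}N^* = NU^mN^* = M_m$, while the $I$-term gives $NRU^{m-1}N^*$. It then remains to identify $NRU^{m-1}N^*$ with $M_{m-2}$: using $RU=C$ and $NC=N$ one peels off a factor, so that $NRU^{m-1}N^* = N(RU)U^{m-2}N^* = NCU^{m-2}N^* = NU^{m-2}N^* = M_{m-2}$. Combining these yields $2P\,M_{m-1}=M_m+M_{m-2}$, the desired recurrence, and the induction closes. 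The only place demanding genuine insight --- and hence the main obstacle --- is recognizing that substituting $2N^*N=C+I$ is what exposes the factor $RC=U$ inside $P\,M_{m-1}$, and dually that the leftover term $NRU^{m-1}N^*$ telescopes back to $M_{m-2}$ via $RU=C$ and the absorbing identity $NC=N$; once these relations are in hand, every remaining step is routine.
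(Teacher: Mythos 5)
Your proof is correct. The paper itself offers no proof of this lemma---it is quoted from Kubota--Segawa \cite[Lemma~3.1]{kubota_pst1}---and your argument (induction on $m$ via the three-term Chebyshev recurrence, closed by the absorbing identities $NC=N$, $CN^*=N^*$, $RU=C$, and $2N^*N=C+I$) is essentially the standard proof given in that reference; you also correctly resolved the notational point that the matrix $S$ appearing in the paper's definition $P=NSN^*$ is the shift matrix, here denoted $R$.
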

	Throughout the paper, we use the standard Euclidean inner product, with the norm defined by $\|x\|=\sqrt{\ip{x}{x}}$.
	\begin{lema}[{\cite[Lemma~3.2]{kubota_pst1}}]\label{sh12}
		Let $G$ be a graph with discriminant $P$. Then for $u\in V(G)$ and any positive integer $m$,  $\|T_m(P)\eu\|\leq1.$
	\end{lema}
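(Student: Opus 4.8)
The plan is to reduce the claim to the single identity supplied by Lemma~\ref{ch11}, namely $NU^mN^* = T_m(P)$, and then read off the norm bound from the metric properties of the three factors $N$, $U$, and $N^*$. Applying the identity to the standard basis vector $\eu$ gives $T_m(P)\eu = NU^mN^*\eu$, so it suffices to show that $\|NU^mN^*\eu\|\le 1$. I would obtain this by tracking the norm through each factor from right to left: $N^*$ sends $\eu$ to a unit vector, $U^m$ preserves the Euclidean norm, and $N$ does not increase it.

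The ingredients are all immediate consequences of the two relations stated in Section~2. First, since $NN^*=I$, the matrix $\Pi:=N^*N$ satisfies $\Pi^2 = N^*(NN^*)N = N^*N = \Pi$ and $\Pi^*=\Pi$, so $\Pi$ is an orthogonal projection; consequently $\|Nx\|^2 = \ip{x}{\Pi x}\le\|x\|^2$ for every $x$, i.e.\ $N$ is a contraction. Second, the evolution matrix $U=RC$ is unitary: the shift matrix $R$ is a real symmetric permutation matrix with $R^2=I$, hence $R^*R=I$, and the coin matrix $C=2N^*N-I=2\Pi-I$ is a self-adjoint reflection with $C^*C=(2\Pi-I)^2=4\Pi^2-4\Pi+I=I$; therefore $U^*U=C^*R^*RC=I$, and $U^m$ is unitary for every positive integer $m$. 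Finally, $NN^*=I$ yields $\|N^*\eu\|^2 = \ip{\eu}{NN^*\eu} = \ip{\eu}{\eu}=1$.

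Combining these observations, I would conclude with the chain
\[
\|T_m(P)\eu\| = \|NU^mN^*\eu\| \le \|U^mN^*\eu\| = \|N^*\eu\| = 1,
\]
where the inequality uses that $N$ is a contraction and the middle equality uses unitarity of $U^m$. There is no serious obstacle here; the only point requiring a moment's care is the operator-norm bound $\|N\|\le 1$, which is precisely the statement that $N^*N$ is an orthogonal projection and is forced by the normalization $NN^*=I$. The unitarity of $U$, although intuitively clear from its role as a quantum-walk operator, I would verify explicitly as above so that the norm-preservation step is fully justified.
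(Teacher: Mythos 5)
Your proof is correct. Note that the paper itself offers no argument for this lemma at all---it is imported verbatim from Kubota and Segawa \cite{kubota_pst1}---so there is no internal proof to compare against; your derivation is the standard one and matches the argument in that reference. All three ingredients are correctly verified from the relations stated in Section~2: $NN^*=I$ makes $N^*N$ an orthogonal projection (so $N$ is a contraction and $N^*\eu$ is a unit vector), and $R^2=I$ with $R$ symmetric together with $C^2=(2N^*N-I)^2=I$ give unitarity of $U=RC$, after which the chain $\|T_m(P)\eu\|=\|NU^mN^*\eu\|\le\|U^mN^*\eu\|=\|N^*\eu\|=1$ follows from Lemma~\ref{ch11}.
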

	A vector $\Phi\in \Cl^\mathcal{A}$ is called a \emph{state} if $\|\Phi\|=1$. We say that perfect state transfer (PST) occurs from a state $\Phi$ to another state $\Psi$ at time $\tau\in \Nl$ if there exists a unimodular complex number $\gamma$ such that $U^\tau\Phi=\gamma \Psi.$ The following characterization of PST is an immediate consequence of the Cauchy–Schwarz inequality.
	\begin{lema}[{\cite[Section~4]{pgst1}}]\label{zhan_pst}
		A graph exhibits PST from a state $\Phi$ to another state $\Psi$ at time $\tau$ if and only if  $\left|\ip{U^\tau \Phi}{\Uppsi}\right|=1.$ 
	\end{lema}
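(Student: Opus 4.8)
The plan is to prove both implications directly, using two facts already available in the setup: the time evolution matrix $U$ is unitary, so that $U^\tau$ preserves the Euclidean norm, and the equality case of the Cauchy--Schwarz inequality. For the forward direction I would assume PST occurs, so by definition there is a unimodular scalar $\gamma$ with $U^\tau\Phi=\gamma\Psi$. Substituting this into the inner product and using that $\Psi$ is a state, $\|\Psi\|=1$, gives $\ip{U^\tau\Phi}{\Psi}=\ip{\gamma\Psi}{\Psi}$, which equals $\gamma$ or $\compconj{\gamma}$ depending on which slot the inner product is conjugate-linear in; in either case $\left|\ip{U^\tau\Phi}{\Psi}\right|=|\gamma|=1$. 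This half needs no inequality at all.

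For the reverse direction I would start from the hypothesis $\left|\ip{U^\tau\Phi}{\Psi}\right|=1$. Since $U$ is unitary and $\Phi$ is a state, $\|U^\tau\Phi\|=\|\Phi\|=1$, and likewise $\|\Psi\|=1$. The Cauchy--Schwarz inequality then reads $\left|\ip{U^\tau\Phi}{\Psi}\right|\leq\|U^\tau\Phi\|\,\|\Psi\|=1$, so the hypothesis forces this bound to be attained with equality. The equality case of Cauchy--Schwarz yields that $U^\tau\Phi$ and $\Psi$ are linearly dependent, say $U^\tau\Phi=\gamma\Psi$ for some scalar $\gamma$. Taking norms gives $1=\|U^\tau\Phi\|=|\gamma|\,\|\Psi\|=|\gamma|$, so $\gamma$ is unimodular and PST occurs by definition.

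I do not expect any genuine obstacle here, which matches the paper's description of the result as an immediate consequence of Cauchy--Schwarz. The only point that deserves a line of care is verifying that the equality case delivers \emph{proportionality with a unimodular constant} rather than merely saturating the bound on the inner product; this is settled by the short norm computation above, which pins down $|\gamma|=1$ from $\|U^\tau\Phi\|=\|\Psi\|=1$.
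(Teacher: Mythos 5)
Your proof is correct and follows exactly the route the paper indicates: the paper offers no written proof of this lemma, citing it from Chan and Zhan and remarking only that it is an immediate consequence of the Cauchy--Schwarz inequality, which is precisely the argument you spell out (unitarity of $U^\tau$ to get $\|U^\tau\Phi\|=1$, the equality case of Cauchy--Schwarz to get proportionality, and the norm computation to pin down $|\gamma|=1$). Nothing further is needed.
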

	We say pretty good state transfer (PGST) occurs from a state $\Phi$ to another state $\Psi$ if there exists a unimodular complex number $\gamma$ such that, for any $\epsilon>0$, there exists a positive integer $\tau$ with 
	\[\|U^\tau\Phi-\gamma\Psi\|<\epsilon.\] 
	Chan and Zhan \cite{pgst1} provided an equivalent characterization of PGST between two distinct states.
	\begin{lema}[{\cite[Lemma~4.1]{pgst1}}]\label{zhan_pgst}
		Let $G$ be a graph and $U$ be its time evolution matrix. Then PGST occurs from a state $\Phi$ to another state $\Psi$  if and only if for any $\epsilon>0$, there exists a positive integer $\tau$ such that  $\left|\ip{U^\tau \Phi}{\Uppsi}\right|>1-\epsilon$. 
	\end{lema}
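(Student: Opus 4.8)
The plan is to prove both implications by comparing the Euclidean distance $\|U^\tau\Phi-\gamma\Psi\|$ with the modulus of the inner product $\ip{U^\tau\Phi}{\Psi}$, using that $U$ is unitary and that $\Phi,\Psi$ are unit vectors. The computation that drives everything is the identity
\[
\|U^\tau\Phi-\gamma\Psi\|^2 = 2 - 2\real\!\left(\overline{\gamma}\,\ip{U^\tau\Phi}{\Psi}\right),
\]
valid for every unimodular $\gamma$ and every $\tau\in\Nl$; it follows by expanding the norm and inserting $\|U^\tau\Phi\|=\|\Psi\|=|\gamma|=1$.

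For the forward direction I would assume PGST holds with witnessing phase $\gamma$. Given $\epsilon>0$, choose $\tau$ with $\|U^\tau\Phi-\gamma\Psi\|<\sqrt{2\epsilon}$; the displayed identity then yields $\real\!\left(\overline{\gamma}\,\ip{U^\tau\Phi}{\Psi}\right)>1-\epsilon$. Since $\left|\ip{U^\tau\Phi}{\Psi}\right|=\left|\overline{\gamma}\,\ip{U^\tau\Phi}{\Psi}\right|\ge\real\!\left(\overline{\gamma}\,\ip{U^\tau\Phi}{\Psi}\right)$, we conclude $\left|\ip{U^\tau\Phi}{\Psi}\right|>1-\epsilon$, as required.

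The converse is the substantive direction, and the phase is the main obstacle: the hypothesis supplies, for each $\epsilon$, a time $\tau$ making $\left|\ip{U^\tau\Phi}{\Psi}\right|$ close to $1$, but the argument of this inner product may oscillate with $\tau$, whereas the definition of PGST demands a single fixed unimodular $\gamma$ working for all $\epsilon$ simultaneously. To overcome this I would take a sequence $\tau_k$ with $\left|\ip{U^{\tau_k}\Phi}{\Psi}\right|\to 1$, write $\ip{U^{\tau_k}\Phi}{\Psi}=r_k\gamma_k$ with $r_k\to 1$ and each $\gamma_k$ unimodular, and invoke compactness of the unit circle to extract a convergent subsequence $\gamma_{k_j}\to\gamma$ with $\gamma$ unimodular. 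Substituting into the displayed identity gives $\real\!\left(\overline{\gamma}\,\ip{U^{\tau_{k_j}}\Phi}{\Psi}\right)=r_{k_j}\,\real\!\left(\overline{\gamma}\,\gamma_{k_j}\right)\to 1$, whence $\|U^{\tau_{k_j}}\Phi-\gamma\Psi\|\to 0$. This exhibits the required fixed phase $\gamma$ together with, for each $\epsilon>0$, a time along the subsequence achieving distance below $\epsilon$, which is exactly PGST and completes the proof.
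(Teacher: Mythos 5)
Your proof is correct, but there is nothing in the paper to compare it against: the paper states this lemma as an imported result, citing Chan and Zhan~\cite{pgst1}, and gives no proof of it, so your argument supplies a proof that the paper omits. The forward direction is routine, and you dispatch it with the identity $\|U^\tau\Phi-\gamma\Psi\|^2=2-2\real\left(\overline{\gamma}\,\ip{U^\tau\Phi}{\Psi}\right)$ together with $|z|\geq\real(z)$. The substance is the converse, where the hypothesis controls only the modulus of $\ip{U^\tau\Phi}{\Psi}$, whose phase may drift with $\tau$, while PGST demands a single unimodular $\gamma$ that works for every $\epsilon$ simultaneously; your compactness argument---writing $\ip{U^{\tau_k}\Phi}{\Psi}=r_k\gamma_k$ with $r_k\to1$, extracting a convergent subsequence $\gamma_{k_j}\to\gamma$ on the unit circle, and feeding this back into the identity to conclude $\|U^{\tau_{k_j}}\Phi-\gamma\Psi\|\to0$---is exactly the right mechanism and is the standard way this equivalence is established. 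Two trivial remarks: the phases $\gamma_k$ are only well defined once $r_k>0$, which holds for all large $k$ since $r_k\to1$ (worth saying explicitly); and the paper's inner product is linear in its second argument (see the computation in \eqref{deguv}), under which convention your identity would read $2-2\real\left(\gamma\,\ip{U^\tau\Phi}{\Psi}\right)$---this merely replaces your limiting phase by its conjugate and affects nothing in the argument.
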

	In this paper, we focus on transferring quantum states that are localized at the vertices of a graph $G$.  A state $\Phi\in \Cl^{\mathcal{A}(G)}$ is said to be \emph{vertex-type} if there exists  $u\in V(G)$ such that $\Phi=N^* \eu$, where $\eu$ is the unit vector corresponding to $u$, defined by $(\eu)_x=\delta_{u,x}$. A vertex-type state describes a quantum state with non-zero values only on the arcs directed towards a specific vertex. This corresponds to the physical scenario where a quantum walker is localized at a single vertex in a discrete quantum walk model. For a visual explanation and further motivation of vertex-type states, we refer the reader to \cite{kubota_pst1}. Formally, the quantum state $\Phi_u$ associated with vertex $u$ is defined by $\Phi_u=N^* \eu$.
	\begin{defi}
		{\em  A graph exhibits \emph{PGST} from a vertex $u$ to another vertex $v$  if there exists a unimodular complex number $\gamma$ such that for any $\epsilon>0$, there exists a positive integer $\tau$ with 
			$\|U^\tau\Phi_u-\gamma\Phi_v\|<\epsilon.$}
	\end{defi}
	Similarly,  a graph exhibits \emph{PST} from a vertex $u$ to another vertex $v$ at time $\tau$ if there exists a unimodular complex number $\gamma$ such that $U^\tau\Phi_u = \gamma \Phi_v$.	We say that a graph exhibits PGST (respectively, PST) if there exist vertices $u$ and $v$ in the graph such that PGST (respectively, PST) occurs from $u$ to $v$.
	\section{Characterization of PGST on graphs}
	Since the discriminant matrix $P$ of a graph is symmetric, it has a spectral decomposition. Let $\sigma(P)$ denote the set of distinct eigenvalues of $P$. For $\mu\in\sigma(P)$, let $E_\mu$ denote the orthogonal projection of $P$ onto the eigenspace corresponding to $\mu$. Then the spectral decomposition of $P$ is 
	\[P=\sum_{\mu\in\sigma(P)}\mu E_\mu.\]
	The orthogonal projections $E_\mu$ satisfy the following relations: $E_\mu^2=E_\mu,~ E_\mu E_\eta=0~\text{for $\mu\neq\eta$},~ E_\mu^t=E_\mu$ and $\sum_{\mu\in\sigma(P)}E_\mu=I$. Using the preceding properties, if $f(x)$ is a polynomial, we have
	\begin{equation}\label{sd}
		f(P)=\sum_{\mu\in\sigma(P)} f(\mu) E_\mu.
	\end{equation}
	In the following result, we establish a connection between the Chebyshev polynomial and the occurrence of PGST on graphs.  
	\begin{lema}\label{dpgst}
		Let $u$ and $v$ be two distinct vertices of a graph $G$. Then the following are equivalent.
		\begin{enumerate}[label=(\roman*)]
			\item PGST occurs from $u$ to $v$ on $G$.
			\item There exists a sequence  of positive integers $\{\tau_k\}$ such that $\lim_{k\to \infty} T_{\tau_k}(P)\eu=\ev.$
			\item For any $\epsilon>0$, there exists a positive integer $\tau$ such that $|\evt T_\tau(P)\eu-1|<\epsilon.$
		\end{enumerate}		
	\end{lema}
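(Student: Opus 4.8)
The plan is to funnel all three statements through the single real scalar $r_\tau := \evt T_\tau(P)\eu$ and reduce them to convergence properties of the sequence $(r_\tau)_\tau$. First I would establish the bridge identity: a direct computation using $\Phi_u = N^*\eu$, $\Phi_v = N^*\ev$ and Lemma~\ref{ch11}, together with the fact that $P$ (hence $T_\tau(P)$) is real and symmetric, yields $\ip{U^\tau\Phi_u}{\Phi_v} = \evt T_\tau(P)\eu =: r_\tau \in \Rl$. By Cauchy--Schwarz and Lemma~\ref{sh12} we have $|r_\tau| = |\ip{T_\tau(P)\eu}{\ev}| \le \|T_\tau(P)\eu\|\,\|\ev\| \le 1$. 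Applying Lemma~\ref{zhan_pgst} to the states $\Phi_u,\Phi_v$ then recasts (i) as: for every $\epsilon>0$ there is a positive integer $\tau$ with $|r_\tau| > 1-\epsilon$, i.e.\ $\limsup_\tau |r_\tau| = 1$.

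The easy equivalences come from a single squared-norm expansion. Using Lemma~\ref{sh12}, $\|T_\tau(P)\eu - \ev\|^2 = \|T_\tau(P)\eu\|^2 - 2r_\tau + 1 \le 2 - 2r_\tau$, so any sequence with $r_{\tau_k}\to 1$ forces $T_{\tau_k}(P)\eu \to \ev$; this gives (iii)$\Rightarrow$(ii), while (ii)$\Rightarrow$(iii) is immediate from continuity of the inner product (since $\evt T_{\tau_k}(P)\eu \to \evt\ev = 1$). Similarly, unitarity of $U$ gives $\|U^\tau\Phi_u - \Phi_v\|^2 = 2 - 2r_\tau$, so $r_{\tau_k}\to 1$ yields PGST with phase $\gamma = 1$, proving (iii)$\Rightarrow$(i).

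The one genuinely nontrivial step is (i)$\Rightarrow$(iii). From the reformulation above, (i) provides a sequence with $|r_{\tau_k}|\to 1$; passing to a subsequence, either $r_{\tau_k}\to 1$ (which is (iii)) or $r_{\tau_k}\to -1$. The hard part is excluding this antipodal case, which corresponds to the destructive transfer phase $\gamma = -1$ and is exactly what separates (iii) from the weaker condition $\limsup|r_\tau|=1$. To rule it out, I would note that $r_{\tau_k}\to -1$ forces $T_{\tau_k}(P)\eu \to -\ev$ by the same squared-norm identity, and then apply the spectral idempotent $E_1$ of the eigenvalue $\mu = 1$ of $P$. Since $P$ is the symmetrically normalized adjacency matrix $D^{-1/2}AD^{-1/2}$, it is entrywise nonnegative with largest eigenvalue $1$, simple on the connected component containing $u$ and $v$ with a strictly positive (Perron) eigenvector $w$; thus $E_1\eu = w_u\, w$ and $E_1\ev = w_v\, w$ with $w_u,w_v>0$. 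Because $E_1 T_{\tau_k}(P) = T_{\tau_k}(1)E_1 = E_1$ is constant in $\tau_k$, applying $E_1$ to $T_{\tau_k}(P)\eu \to -\ev$ gives $E_1\eu = -E_1\ev$, i.e.\ $w_u\,w = -w_v\,w$, which is impossible for positive $w_u,w_v$. Hence $r_{\tau_k}\to 1$ and (iii) holds. I expect this Perron--Frobenius exclusion of the $\gamma=-1$ phase to be the only delicate point; everything else is Cauchy--Schwarz bookkeeping built on the bridge identity.
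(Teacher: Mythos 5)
Your proposal is correct, and its skeleton parallels the paper's: both convert statements about $U$ into statements about $T_\tau(P)$ via Lemma~\ref{ch11}, both reduce the phase to a dichotomy $\gamma\in\{-1,1\}$, and both exclude $\gamma=-1$ by applying the projection $E_1$ onto the $1$-eigenspace and exploiting positivity. The mechanisms differ, though, in ways worth noting. For the exclusion step you invoke Perron--Frobenius theory: $P=D^{-1/2}AD^{-1/2}$ is nonnegative, its eigenvalue $1$ is simple on the relevant component, and $E_1\eu$, $E_1\ev$ are positive multiples of the Perron vector, so $E_1\eu=-E_1\ev$ is impossible. The paper gets the same conclusion more elementarily: it uses only that $D^{\frac{1}{2}}\jb$ is a $1$-eigenvector (hence fixed by $E_1$) and pairs $E_1\eu=\gamma E_1\ev$ against it to obtain $\sqrt{\deg u}=\gamma\sqrt{\deg v}$, forcing $\gamma=1$; this needs no irreducibility, no simplicity, and no connectivity hypothesis, and it yields the degree-equality corollary for free. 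Your argument tacitly assumes $u$ and $v$ lie in a common component; this is a patchable, not fatal, gap (if they do not, then $\evt T_\tau(P)\eu=0$ for all $\tau$ and (i) fails outright, or note that $E_1\eu$ and $E_1\ev$ are then nonzero with disjoint supports, again incompatible with $E_1\eu=-E_1\ev$), but it should be said explicitly. In the reverse direction your route is actually cleaner than the paper's: for (iii)$\Rightarrow$(i) the paper detours through Chan--Zhan's Lemma~\ref{zhan_pgst}, whereas your unitarity identity $\|U^\tau\Phi_u-\Phi_v\|^2=2-2\,\evt T_\tau(P)\eu$ produces PGST directly with the explicit phase $\gamma=1$, making that implication self-contained; the remaining (ii)$\Leftrightarrow$(iii) bookkeeping is the same in both proofs.
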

	\begin{proof}
		Let PGST occur from $u$ to $v$ on $G$. Then there exists a unimodular complex number $\gamma$ and a sequence of positive integers $\{\tau_k\}$ such that $$\lim_{k\to \infty}U^{\tau_k}N^*\eu=\gamma N^*\ev.$$
		This implies that
		$$\lim_{k\to \infty}NU^{\tau_k}N^*\eu=\gamma N N^*\ev.$$
		Therefore by Lemma \ref{ch11}, we conclude that
		\begin{equation}\label{pgst_gamma=1}
			\lim_{k\to \infty} T_{\tau_k}(P)\eu=\gamma\ev.
		\end{equation}
		By \eqref{pgst_gamma=1}, $\gamma\in\{-1,1\}$. We now prove that $\gamma$ must be $1$. By \eqref{sd}, we have $T_m(P)=\sum_{\mu\in\sigma(P)} T_m(\mu) E_\mu$ for any positive integer $m$.  Note that $1$ is an eigenvalue of $P$ with corresponding eigenvector $D^\frac{1}{2}\mathbf{j}$, where $D$ is the diagonal matrix with $\eut D\eu=\deg u$ and $\jb$ is the all-ones vector (see \cite[Lemma~3.1]{bhakta4} for details). Then from \eqref{pgst_gamma=1} and the spectral decomposition of $P$, it follows that $\lim_{k \to \infty}T_{\tau_k}(1)E_1\eu=\gamma E_1\ev$.  Since $T_m(1)=1$ for each $m$, we have $E_1\eu=\gamma E_1\ev$.  Noting that $E_1D^\frac{1}{2}\jb=D^\frac{1}{2}\jb$, we have
		\begin{equation}\label{deguv}
			\sqrt{\deg u}=\ip{D^\frac{1}{2}\jb}{\eu}=\ip{D^\frac{1}{2}\jb}{E_1\eu}=\ip{D^\frac{1}{2}\jb}{\gamma E_1\ev}=\ip{D^\frac{1}{2}\jb}{\gamma\ev}=\gamma\sqrt{\deg v}.
		\end{equation}
		This yields $\gamma=1$. Thus (i) implies (ii).

		Now suppose statement (ii) holds.  Then for any $\epsilon>0$, there exists a positive integer $\tau$ such that $\|T_\tau(P)\eu-\ev\|<\epsilon.$ Therefore
		\[
		|\evt T_\tau(P)\eu-1|=\left|\evt \left(T_\tau(P)\eu-\ev\right)\right|\leq \|T_\tau(P)\eu-\ev\| <\epsilon.		
		\]
		
		Finally assume that statement (iii) holds. Noting that $\left\|\evt  T_\tau(p)\eu|-1\right|\leq|\evt T_\tau(p)\eu-1|,$ we find from (iii) that  for any $\epsilon>0$, there exists a positive integer $\tau$ such that 
		$|\evt T_\tau(p)\eu|>1-\epsilon.$ This implies that $\left|\ip{T_\tau(P)\eu}{\ev}\right|>1-\epsilon.$ By Lemma~\ref{ch11}, we have  $\left|\ip{U^\tau N^*\eu}{N^*\ev}\right|>1-\epsilon.$ Then by Lemma~\ref{zhan_pgst}, PGST occurs from $u$ to $v$.
	\end{proof}
	Note that $P$ is a symmetric matrix. Therefore $T_m(P)$ is also a symmetric matrix for each $m$. Thus by Lemma \ref{dpgst}, if PGST occurs from vertex $u$ to vertex $v$, then  PGST occurs from $v$ to $u$ as well. Thus instead of saying PGST occurs from $u$ to $v$, one can say that PGST occurs between $u$ and $v$. The next result follows from \eqref{deguv}.
	\begin{corollary}
		Let $u$ and $v$ be two vertices of a graph $G$. If $G$ exhibits PGST between $u$ and $v$, then $\deg u=\deg v$.
	\end{corollary}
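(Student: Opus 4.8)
The plan is to read off the conclusion directly from equation~\eqref{deguv}, which was already established in the course of proving Lemma~\ref{dpgst}. Recall that the hypothesis of the corollary is precisely that $G$ exhibits PGST between $u$ and $v$; under exactly this hypothesis, the proof of Lemma~\ref{dpgst} produced a unimodular constant $\gamma$ and derived the chain
\[
\sqrt{\deg u}=\ip{D^\frac{1}{2}\jb}{\eu}=\ip{D^\frac{1}{2}\jb}{E_1\eu}=\ip{D^\frac{1}{2}\jb}{\gamma E_1\ev}=\ip{D^\frac{1}{2}\jb}{\gamma\ev}=\gamma\sqrt{\deg v}.
\]
So the starting point is the scalar identity $\sqrt{\deg u}=\gamma\sqrt{\deg v}$, and the only task that remains is to extract an equality of degrees from it.

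First I would observe that $\deg u$ and $\deg v$ are positive integers, so $\sqrt{\deg u}$ and $\sqrt{\deg v}$ are strictly positive real numbers. Taking absolute values on both sides of $\sqrt{\deg u}=\gamma\sqrt{\deg v}$ and using $|\gamma|=1$ then yields $\sqrt{\deg u}=|\gamma|\sqrt{\deg v}=\sqrt{\deg v}$, whence $\deg u=\deg v$. (Equivalently, since the left-hand side $\sqrt{\deg u}$ is a positive real and $\sqrt{\deg v}>0$, the ratio $\gamma=\sqrt{\deg u}/\sqrt{\deg v}$ is forced to be a positive real; combined with $|\gamma|=1$ this gives $\gamma=1$, which already appears in the proof of Lemma~\ref{dpgst}, and again $\deg u=\deg v$.)

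I do not anticipate any genuine obstacle here: the substantive work—identifying that $1\in\sigma(P)$ with eigenvector $D^{1/2}\jb$, passing from the operator limit to the projection $E_1$, and pinning down $\gamma$—was all carried out in establishing \eqref{deguv}. The corollary is therefore an immediate consequence, and the proof amounts to a single line of scalar reasoning once \eqref{deguv} is invoked.
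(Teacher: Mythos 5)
Your proposal is correct and matches the paper exactly: the paper's proof consists of the single remark that the corollary ``follows from \eqref{deguv}'', i.e.\ from the identity $\sqrt{\deg u}=\gamma\sqrt{\deg v}$ established (together with $\gamma=1$) in the proof of Lemma~\ref{dpgst}, which is precisely the reasoning you spell out.
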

	The Grover walk on a graph is said to be \emph{periodic} if there exists a positive integer $\tau$ such that $U^\tau=I$. We say that a graph is periodic whenever the Grover walk on the graph is periodic. The following result establishes that, under periodicity, PST and PGST are equivalent.
	\begin{lema}\label{pst_pgst}
		Let $G$ be a periodic graph. Then PST occurs between the vertices $u$ and $v$ of $G$ if and only if PGST occurs between $u$ and $v$.
	\end{lema}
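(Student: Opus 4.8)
The plan is to prove the two implications separately, observing that only one direction actually uses periodicity. For the forward direction, that PST implies PGST, I would note that this holds for \emph{any} graph with no periodicity hypothesis at all: if $U^\tau\Phi_u=\gamma\Phi_v$ for some unimodular $\gamma$, then $\|U^\tau\Phi_u-\gamma\Phi_v\|=0<\epsilon$ for every $\epsilon>0$, so the defining condition for PGST holds using the same $\tau$ and $\gamma$ for all $\epsilon$. Nothing beyond the definitions is needed here.

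The substance lies in the converse, that PGST implies PST when $G$ is periodic. First I would use periodicity to fix a positive integer $T$ with $U^T=I$. Combining this with Lemma~\ref{ch11}, which gives $T_m(P)=NU^mN^*$, yields $T_{m+T}(P)=NU^{m+T}N^*=NU^mN^*=T_m(P)$, so the sequence $\{T_m(P)\}_{m\geq1}$ is periodic in $m$. Consequently the orbit $\{T_m(P)\eu:m\in\Zm\}$ is a \emph{finite} subset of $\Cl^{V(G)}$. I would then feed in the PGST hypothesis through Lemma~\ref{dpgst}: PGST from $u$ to $v$ supplies a sequence of positive integers $\{\tau_k\}$ with $\lim_{k\to\infty}T_{\tau_k}(P)\eu=\ev$.

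The key observation is that a convergent sequence whose terms all lie in a finite set must eventually equal its limit. Concretely, let $\delta>0$ be the minimum distance between distinct elements of the finite orbit together with $\ev$; convergence forces the terms $T_{\tau_k}(P)\eu$ to lie within $\delta$ of $\ev$ for all large $k$, and since any two distinct orbit elements are at distance at least $\delta$, such terms must actually equal $\ev$. In particular $\ev$ belongs to the orbit, so there exists a positive integer $\tau$ with $T_\tau(P)\eu=\ev$. Finally I would translate this exact identity back to the arc space: using Lemma~\ref{ch11} again, $\ip{U^\tau\Phi_u}{\Phi_v}=\ip{U^\tau N^*\eu}{N^*\ev}=\ip{NU^\tau N^*\eu}{\ev}=\ip{T_\tau(P)\eu}{\ev}=\ip{\ev}{\ev}=1$, so $\left|\ip{U^\tau\Phi_u}{\Phi_v}\right|=1$, and Lemma~\ref{zhan_pst} certifies PST from $u$ to $v$ at time $\tau$.

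The one place that genuinely requires care — the main obstacle — is this finiteness step, which is exactly the reduction from an approximate (limiting) statement to an exact one. It is precisely periodicity, filtered through Lemma~\ref{ch11}, that collapses the infinite approximating sequence onto finitely many possible vectors, forcing the limit $\ev$ to be attained rather than merely approached; without periodicity the orbit could be infinite and the limit need not be realized by any single $\tau$.
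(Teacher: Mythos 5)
Your proposal is correct and takes essentially the same route as the paper: periodicity combined with Lemma~\ref{ch11} makes the Chebyshev quantities periodic in $m$ and hence finite-valued, which forces the approximate PGST statement to be attained exactly at some finite time, after which Lemma~\ref{zhan_pst} certifies PST. The only difference is cosmetic — you work at the vector level and obtain $T_\tau(P)\eu=\ev$ exactly via the eventually-constant-sequence argument, whereas the paper works with the scalar $f(s)=|\evt T_s(P)\eu|$ and shows its supremum $1$ is attained on the finite range.
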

	\begin{proof}
		If PST occurs between the vertices $u$ and $v$, then clearly PGST also occurs between $u$ and $v$.
		
		Conversely, assume that PGST occurs between $u$ and $v$. Since $G$ is periodic, there exists a positive integer $\tau_0$ such that $U^{\tau_0}=I$. Now, define a function $f:\Nl\to \Rl$ by $f(s)=|\evt T_s(P)\eu|$.  For each $s\in \Nl$,
		\begin{align*}
			f(\tau_0+s)&=|\evt T_{\tau_0+s}(P)\eu|\\
			&=|\evt NU^{\tau_0+s}N^*\eu|\tag*{(by Lemma \ref{ch11})}\\
			&=|\evt NU^sN^*\eu|\\
			&=|\evt T_s(P)\eu|\\
			&=f(s).
		\end{align*}
		Thus $f$ is a periodic function, and hence $f(\Nl)$ is a finite subset in $\Rl$. Since PGST occurs between $u$ and $v$, for each $\epsilon>0$, there exists a positive integer $\tau$ such that 
		$\left| |\evt T_{\tau}(P)\eu|-1\right|<\epsilon.$
		By Lemma \ref{sh12}, for any positive integer $m$, we have
		$|\evt T_{m}(P)\eu|\leq 1.$
		Therefore
		$$\sup_{m\in \Nl} |\evt T_{m}(P)\eu|=1.$$
		Since $f(\Nl)$ is finite, there exists a positive integer  $m$ such that
		$|\evt T_{m}(P)\eu|=1.$ This implies  that $\left|\ip{U^{m}N^*\eu}{N^*\ev}\right|=1$. Therefore by Lemma~\ref{zhan_pst}, PST occurs between $u$ and $v$.
	\end{proof}
	It is known that the cycle $C_n$ is periodic for each positive integer $n$ with $n\geq3$ (see \cite{mixedpaths}). In~\cite{bhakta1}, we proved that the cycle $C_n$ exhibits PST if and only if $n$ is even. Combining these results with Lemma~\ref{pst_pgst}, we obtain the following.
	\begin{lema}\label{pgst_cycle}
		The cycle on $n$ vertices exhibits PGST if and only if $n$ is even.
	\end{lema}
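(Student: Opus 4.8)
The plan is to obtain the statement as a direct consequence of Lemma~\ref{pst_pgst}, together with the two external facts quoted in the paragraph preceding the lemma: that $C_n$ is periodic for every $n\geq 3$ (from \cite{mixedpaths}), and that $C_n$ exhibits PST if and only if $n$ is even (from \cite{bhakta1}). The only real work is to thread the per-vertex-pair equivalence of Lemma~\ref{pst_pgst} through the graph-level quantifier in the definitions of ``exhibits PST'' and ``exhibits PGST.''

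First I would fix $n\geq 3$ and record that the Grover walk on $C_n$ is periodic, so that the hypothesis of Lemma~\ref{pst_pgst} is satisfied. Lemma~\ref{pst_pgst} then asserts, for any two vertices $u,v$ of $C_n$, that PST occurs between $u$ and $v$ precisely when PGST occurs between $u$ and $v$. Next I would lift this to the level of the whole graph: by definition $C_n$ exhibits PGST iff there exist vertices $u,v$ with PGST between them, and by the pointwise equivalence just stated this holds iff there exist vertices $u,v$ with PST between them, i.e.\ iff $C_n$ exhibits PST. Thus $C_n$ exhibits PGST if and only if $C_n$ exhibits PST.

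Finally I would substitute the PST characterization from \cite{bhakta1}, namely that $C_n$ exhibits PST exactly when $n$ is even, to conclude that $C_n$ exhibits PGST if and only if $n$ is even, which is the claim. I do not expect any genuine obstacle here, since all three ingredients are already in hand; the only point requiring a little care is the logical bookkeeping, making sure the quantifier ``there exist vertices $u,v$'' is handled consistently on both sides of the equivalence so that the per-pair statement of Lemma~\ref{pst_pgst} correctly upgrades to the graph-level statement.
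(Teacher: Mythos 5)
Your proposal is correct and is exactly the paper's argument: the paper gives no separate proof, simply noting that $C_n$ is periodic for $n\geq 3$ (from \cite{mixedpaths}), that $C_n$ exhibits PST if and only if $n$ is even (from \cite{bhakta1}), and that these combine with Lemma~\ref{pst_pgst} to yield the result. Your extra care with the graph-level existential quantifier is a harmless elaboration of the same route.
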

	Let $\aut(G)$ be the set of all automorphisms of $G$. An $\eta\in\aut(G)$ corresponds to a permutation matrix  $M_\eta\in\Cl^{V(G)\times V(G)}$ 
	given by $(M_\eta)_{uv}=\delta_{u,\eta(v)}$. For $u\in V(G)$, let $\aut(G)_u=\{\eta\in\aut(G):\eta(u)=u\}$.
	\begin{lema}[{\cite[Lemma~3.3]{bhakta3}}]\label{per1}
		Let $G$ be a graph with discriminant matrix  $P$ and $\eta\in\aut(G)$. Then $M_\eta P=PM_\eta$. 
	\end{lema}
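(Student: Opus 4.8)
The plan is to lift the vertex automorphism $\eta$ to the arc space and to show that the resulting arc permutation intertwines each of the three ingredients that build the discriminant, namely $P=NSN^*$, where $S$ denotes the shift matrix (so $S_{ab}=\delta_{a,b^{-1}}$). Concretely, I would define the arc permutation matrix $\widehat{M}_\eta\in\Cl^{\mathcal{A}(G)\times\mathcal{A}(G)}$ by $(\widehat{M}_\eta)_{ab}=\delta_{a,\eta(b)}$, where $\eta$ acts on an arc $(x,y)$ by $\eta(x,y)=(\eta(x),\eta(y))$. Since $\eta$ is an automorphism it maps edges to edges, hence preserves the symmetric arc set $\mathcal{A}(G)$ and arc reversal, so $\widehat{M}_\eta$ is a genuine permutation matrix on $\mathcal{A}(G)$.

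The heart of the argument is the three intertwining identities
\[
N\widehat{M}_\eta=M_\eta N,\qquad \widehat{M}_\eta S=S\widehat{M}_\eta,\qquad \widehat{M}_\eta N^*=N^*M_\eta .
\]
The first follows by a direct entrywise computation: using the equivariance $t(\eta(a))=\eta(t(a))$ together with the fact that $\eta$ preserves degrees (so the normalizing factors $\tfrac{1}{\sqrt{\deg u}}$ match up), both $(N\widehat{M}_\eta)_{ub}$ and $(M_\eta N)_{ub}$ reduce to $\tfrac{1}{\sqrt{\deg u}}\,\delta_{\eta^{-1}(u),\,t(b)}$. The second identity holds because $\eta$ commutes with arc reversal, $\eta(a^{-1})=\eta(a)^{-1}$, from which both $(\widehat{M}_\eta S)_{ab}$ and $(S\widehat{M}_\eta)_{ab}$ equal $\delta_{a,\eta(b)^{-1}}$. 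The third identity is the conjugate transpose of the first, using that permutation matrices are real orthogonal, so $\widehat{M}_\eta^*=\widehat{M}_\eta^{-1}$ and $M_\eta^*=M_\eta^{-1}$.

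With these in hand the conclusion is a one-line chain:
\[
M_\eta P=M_\eta N S N^*=N\widehat{M}_\eta S N^*=N S\widehat{M}_\eta N^*=N S N^* M_\eta=P M_\eta .
\]
I expect the only genuine obstacle to be the first intertwining identity $N\widehat{M}_\eta=M_\eta N$, since it is the step where the $\tfrac{1}{\sqrt{\deg u}}$ normalizations and the origin/terminus conventions must be tracked carefully; this is precisely where the automorphism-invariance of the degree is invoked. Once that relation is settled, commutation with $S$ and the transpose relation are essentially formal, and the final computation is immediate.
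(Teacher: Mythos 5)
Your proof is correct. Note that this paper never proves the lemma itself---it is quoted from \cite[Lemma~3.3]{bhakta3}---so there is no internal argument to compare against; your route of lifting $\eta$ to an arc permutation $\widehat{M}_\eta$ and checking the three intertwining relations $N\widehat{M}_\eta=M_\eta N$, $\widehat{M}_\eta S=S\widehat{M}_\eta$, $\widehat{M}_\eta N^*=N^*M_\eta$ is the standard and natural one, and every step (including the degree-invariance point, which is indeed where the automorphism hypothesis enters) checks out. The only step stated slightly loosely is the third identity: the conjugate transpose of the first identity literally reads $\widehat{M}_\eta^{*}N^{*}=N^{*}M_\eta^{*}$, and one must then multiply on the left by $\widehat{M}_\eta$ and on the right by $M_\eta$ (or, equivalently, apply the first identity to the automorphism $\eta^{-1}$) to reach $\widehat{M}_\eta N^{*}=N^{*}M_\eta$; this works precisely because the permutation matrices are orthogonal, as you note, so the gap is cosmetic rather than mathematical.
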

	This commuting relation between $M_\eta$ and $P$ has significant implications for PGST. In particular, it implies that automorphisms preserve the occurrence of PGST between vertices, as shown in the following result.
	\begin{lema}\label{aut_pgst1}
		Let $\eta$ be an automorphism of a graph $G$. Then PGST occurs between the vertices $u$ and $v$ of $G$ if and only if PGST occurs between $\eta(u)$ and $\eta(v)$.
	\end{lema}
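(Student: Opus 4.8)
The plan is to reduce everything to the Chebyshev characterization of PGST in Lemma~\ref{dpgst}(ii), and to exploit the fact that the permutation matrix $M_\eta$ commutes not merely with $P$ but with every polynomial in $P$.

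First I would record two elementary facts about $M_\eta$. Since $(M_\eta)_{xw}=\delta_{x,\eta(w)}$, the matrix $M_\eta$ acts on standard basis vectors by $M_\eta\mathbf{e}_w=\mathbf{e}_{\eta(w)}$; in particular $M_\eta\eu=\mathbf{e}_{\eta(u)}$ and $M_\eta\ev=\mathbf{e}_{\eta(v)}$. Moreover $M_\eta$ is a permutation matrix, hence orthogonal, and in particular a bounded (continuous) linear operator, so it may be moved through limits.

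The crux is the following algebraic observation. By Lemma~\ref{per1} we have $M_\eta P=PM_\eta$. Because each Chebyshev polynomial $T_m$ is a genuine polynomial, $T_m(P)$ is a polynomial in $P$, and any matrix commuting with $P$ commutes with every polynomial in $P$. Hence $M_\eta T_m(P)=T_m(P)M_\eta$ for every $m$. This is really the only step that uses the automorphism, and I expect it to be the main (though modest) point of the argument; everything else is formal.

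With this in hand the equivalence is immediate. Suppose PGST occurs from $u$ to $v$. By Lemma~\ref{dpgst}(ii) there is a sequence $\{\tau_k\}$ of positive integers with $\lim_{k\to\infty}T_{\tau_k}(P)\eu=\ev$. Applying the continuous map $M_\eta$ and using the commuting relation, $T_{\tau_k}(P)\mathbf{e}_{\eta(u)}=T_{\tau_k}(P)M_\eta\eu=M_\eta T_{\tau_k}(P)\eu\to M_\eta\ev=\mathbf{e}_{\eta(v)}$, so $\lim_{k\to\infty}T_{\tau_k}(P)\mathbf{e}_{\eta(u)}=\mathbf{e}_{\eta(v)}$. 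By Lemma~\ref{dpgst}(ii) again, PGST occurs from $\eta(u)$ to $\eta(v)$. For the converse I would simply run the same implication with the automorphism $\eta^{-1}\in\aut(G)$ applied to the vertices $\eta(u)$ and $\eta(v)$, which returns $u$ and $v$.
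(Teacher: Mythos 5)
Your proposal is correct and follows essentially the same route as the paper's proof: both reduce to the sequential characterization in Lemma~\ref{dpgst}(ii), use Lemma~\ref{per1} to commute $M_\eta$ past $T_{\tau_k}(P)$, and push $M_\eta$ through the limit to transport the convergence from $(u,v)$ to $(\eta(u),\eta(v))$. Your handling of the converse via $\eta^{-1}$ is just a slightly more explicit phrasing of the paper's appeal to the invertibility of automorphisms.
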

	\begin{proof}
		By Lemma~\ref{dpgst}, if PGST occur between $u$ and $v$, then there exists a sequence  of positive integers $\{\tau_k\}$  such that 
		$$\lim_{k\to \infty} T_{\tau_k}(P)\eu=\ev.$$
		This implies that
		$$\lim_{k\to \infty} M_\eta T_{\tau_k}(P)\eu= M_\eta\ev.$$
		By Lemma \ref{per1}, $M_\eta m(P)=T_m(P)M_\eta$ for any positive integer $m$. Thus
		$$\lim_{k\to \infty} T_{\tau_k}(P)M_\eta\eu= M_\eta\ev.$$
		Since $M_\eta\eu=\mathbf{e}_{\eta(u)}$ and $M_\eta\ev=\mathbf{e}_{\eta(v)}$, we find that     $\lim_{k\to \infty} T_{\tau_k}(P)\mathbf{e}_{\eta(u)}=\mathbf{e}_{\eta(v)}.$ Hence  PGST occurs between $\eta(u)$ and $\eta(v)$. Converse follows from the invertibility of automorphisms of $G$.
	\end{proof}
	The invariance of PGST under automorphisms has further consequences. In particular, if PGST occurs between a pair of vertices $u$ and $v$, then an automorphism fixing  $u$ must also fix $v$.
	\begin{lema}\label{aut_pgst2}
		Let $u$ and $v$ be two vertices of a graph $G$. If PGST occurs between $u$ and $v$ on $G$, then $\aut(G)_u=\aut(G)_v$.
	\end{lema}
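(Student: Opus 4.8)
The plan is to prove the two inclusions $\aut(G)_u\subseteq\aut(G)_v$ and $\aut(G)_v\subseteq\aut(G)_u$ separately. Since PGST between $u$ and $v$ is symmetric (as noted in the paragraph following Lemma~\ref{dpgst}), it suffices to establish the first inclusion and invoke this symmetry for the second. So I fix an automorphism $\eta\in\aut(G)_u$ and aim to show $\eta(v)=v$.

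By Lemma~\ref{dpgst}, the occurrence of PGST between $u$ and $v$ supplies a sequence of positive integers $\{\tau_k\}$ with $\lim_{k\to\infty}T_{\tau_k}(P)\eu=\ev$. The key point is to feed this \emph{single} sequence through the automorphism, rather than appealing to any separate monogamy property of PGST. Since $\eta$ fixes $u$, its permutation matrix satisfies $M_\eta\eu=\mathbf{e}_{\eta(u)}=\eu$, and by Lemma~\ref{per1} it commutes with $P$, hence with every $T_m(P)$. Applying the fixed matrix $M_\eta$ to the displayed limit and using continuity of multiplication by $M_\eta$ gives $\lim_{k\to\infty}M_\eta T_{\tau_k}(P)\eu=M_\eta\ev=\mathbf{e}_{\eta(v)}$; meanwhile, commutativity together with $M_\eta\eu=\eu$ rewrites the left-hand side as $\lim_{k\to\infty}T_{\tau_k}(P)M_\eta\eu=\lim_{k\to\infty}T_{\tau_k}(P)\eu=\ev$.

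Comparing these two evaluations of the same limit, and using uniqueness of limits in the finite-dimensional space $\Cl^{V(G)}$, I obtain $\ev=\mathbf{e}_{\eta(v)}$, that is, $\eta(v)=v$, so $\eta\in\aut(G)_v$. This proves $\aut(G)_u\subseteq\aut(G)_v$, and the reverse inclusion follows by exchanging the roles of $u$ and $v$, giving the claimed equality.

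I expect the real content of the argument—and the step most easily overlooked—to be the insistence that both limits be run along the \emph{same} sequence $\{\tau_k\}$. This is exactly what lets the commutativity of $M_\eta$ with $T_{\tau_k}(P)$ and the identity $M_\eta\eu=\eu$ collapse $M_\eta T_{\tau_k}(P)\eu$ back to $T_{\tau_k}(P)\eu$, thereby sidestepping the need to prove separately that $u$ can be a PGST partner of at most one vertex. Everything else (continuity, uniqueness of limits, and the symmetry of PGST) is routine and already available from the earlier results.
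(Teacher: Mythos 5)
Your proposal is correct and follows essentially the same route as the paper's own proof: both extract the sequence $\{\tau_k\}$ from Lemma~\ref{dpgst}, apply the fixed permutation matrix $M_\eta$ using the commutativity from Lemma~\ref{per1} together with $M_\eta\eu=\eu$, and conclude $\eta(v)=v$ from uniqueness of limits. The only cosmetic difference is that the paper phrases the reverse inclusion as ``similarly'' rather than explicitly invoking the symmetry of PGST, but the underlying argument is identical.
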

	\begin{proof}
		Assume that   PGST occurs between $u$ and $v$. Then there exists a sequence  of positive integers $\{\tau_k\}$ such that 
		\begin{equation}\label{auto_pgst}
			\lim_{k\to \infty} T_{\tau_k}(P)\eu=\ev.
		\end{equation}
		Let $\eta\in \aut(G)_u$. Then the corresponding permutation matrix $M_\eta$ satisfies $M_\eta\eu=\eu$.  Thus  from \eqref{auto_pgst}, we have
		$$\lim_{k\to \infty} T_{\tau_k}(P)M_\eta\eu=M_\eta\ev.$$
		Since $M_\eta\eu=\eu$, it follows that
		$$\lim_{k\to \infty} T_{\tau_k}(P)\eu=M_\eta\ev.$$
		Now the sequence $\{T_{\tau_k}(P)\eu\}$ cannot have two different limits. Therefore $M_\eta\ev=\ev$, and hence $\eta(v)=v$. Thus $\eta\in\aut(G)_v$. Similarly, if  $\eta\in\aut(G)_v$ then $\eta\in\aut(G)_u$. Thus we have the desired result.
	\end{proof}
	We now explore a spectral condition that must be satisfied by any pair of vertices exhibiting PGST. The next lemma also appears in \cite{pgst1}. However, our proof technique is completely different. 
	\begin{lema}\label{pgst_strongly}
		Let $P$ be the discriminant matrix of a graph $G$ with spectral decomposition $P=\sum_{\mu\in\sigma(P)} \mu E_\mu$.  If PGST occurs between the vertices $u$ and $v$,  then $E_\mu\eu=\pm E_\mu\ev$ for each $\mu\in\sigma(P)$.
	\end{lema}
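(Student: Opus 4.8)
The plan is to start from the spectral reformulation of PGST given by Lemma~\ref{dpgst}(ii): if PGST occurs between $u$ and $v$, there is a sequence of positive integers $\{\tau_k\}$ with $\lim_{k\to\infty} T_{\tau_k}(P)\eu = \ev$. Using the spectral decomposition \eqref{sd}, I would write $T_{\tau_k}(P)\eu = \sum_{\mu\in\sigma(P)} T_{\tau_k}(\mu)E_\mu\eu$ and then apply the (continuous) projection $E_\mu$ to both sides of the limit. Since the $E_\mu$ are orthogonal idempotents, $E_\mu T_{\tau_k}(P)\eu = T_{\tau_k}(\mu)E_\mu\eu$, so for each $\mu\in\sigma(P)$ the limit decouples into
\[
\lim_{k\to\infty} T_{\tau_k}(\mu)\,E_\mu\eu = E_\mu\ev.
\]

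Next I would promote this to an identity $E_\mu\ev = c_\mu E_\mu\eu$ for a real scalar $c_\mu$. If $E_\mu\eu = \mathbf{0}$, the displayed limit forces $E_\mu\ev = \mathbf{0}$ as well, and the desired relation holds trivially. If $E_\mu\eu \neq \mathbf{0}$, taking the inner product of the limit against the fixed nonzero vector $E_\mu\eu$ shows that the real scalar sequence $\{T_{\tau_k}(\mu)\}_k$ itself converges, to $c_\mu = \ip{E_\mu\ev}{E_\mu\eu}/\|E_\mu\eu\|^2$, whence $E_\mu\ev = c_\mu E_\mu\eu$. The crucial extra input is the uniform bound $|c_\mu|\le 1$: because $\sigma(P)\subseteq[-1,1]$ (which follows from Lemma~\ref{sh12}, since $|\mu|>1$ would force $\|T_m(P)\eu\|\ge|T_m(\mu)|\,\|E_\mu\eu\|\to\infty$ for some $u$ with $E_\mu\eu\neq\mathbf{0}$), we have $|T_{\tau_k}(\mu)|\le 1$ for all $k$, and hence $|c_\mu|\le 1$.

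Finally I would close the argument with a Parseval computation. As the $E_\mu$ are mutually orthogonal projections summing to $I$, the vectors $\{E_\mu\eu\}$ are pairwise orthogonal, so $\|\eu\|^2 = \sum_\mu \|E_\mu\eu\|^2 = 1$ and likewise $\|\ev\|^2 = \sum_\mu \|E_\mu\ev\|^2 = 1$. Substituting $E_\mu\ev = c_\mu E_\mu\eu$ gives $\sum_\mu c_\mu^2\|E_\mu\eu\|^2 = 1 = \sum_\mu \|E_\mu\eu\|^2$. Comparing the two sums term by term and using $c_\mu^2\le 1$, every summand with $E_\mu\eu\neq\mathbf{0}$ must satisfy $c_\mu^2 = 1$, i.e. $c_\mu=\pm 1$. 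Therefore $E_\mu\eu = \pm E_\mu\ev$ for every $\mu\in\sigma(P)$, which is the claim.

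I expect the main obstacle to be the middle step, namely establishing that the scalar $c_\mu$ is well defined and bounded in modulus by $1$: the convergence of $\{T_{\tau_k}(\mu)\}_k$ is not automatic from the vector limit alone and must be extracted by pairing against $E_\mu\eu$, while the bound $|c_\mu|\le 1$ relies essentially on the spectral constraint $\sigma(P)\subseteq[-1,1]$. Once these are in hand, the tightness argument that upgrades $|c_\mu|\le 1$ to $c_\mu=\pm 1$ via the two equal norm sums is routine.
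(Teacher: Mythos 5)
Your proof is correct, but it takes a genuinely different route from the paper's. The paper argues by contradiction through an inequality chain: using the spectral decomposition and $|T_m(\mu)|\le 1$, it bounds $\left|\ip{T_m(P)\eu}{\ev}\right|\le\sum_{\mu\in\sigma(P)}\left|\ip{E_\mu\eu}{E_\mu\ev}\right|\le 1$, the last step being Cauchy--Schwarz applied twice; if $E_\mu\eu\neq\pm E_\mu\ev$ for some $\mu$, the equality conditions of Cauchy--Schwarz make the middle sum strictly less than $1$, and since this bound is uniform in $m$, the PGST criterion of Lemma~\ref{dpgst}(iii) is violated. You instead work directly with the vector limit of Lemma~\ref{dpgst}(ii): projecting $T_{\tau_k}(P)\eu\to\ev$ onto each eigenspace decouples the convergence into $T_{\tau_k}(\mu)E_\mu\eu\to E_\mu\ev$, from which you extract scalar limits $c_\mu$ with $|c_\mu|\le 1$, and you then force $c_\mu=\pm 1$ by comparing the two Parseval sums $\sum_\mu c_\mu^2\|E_\mu\eu\|^2=1=\sum_\mu\|E_\mu\eu\|^2$ term by term (your preliminary observation that $E_\mu\eu=\mathbf{0}$ forces $E_\mu\ev=\mathbf{0}$ is what makes this comparison legitimate). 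Both arguments rest on the same two ingredients, namely $\sigma(P)\subseteq[-1,1]$ and $\|\eu\|=\|\ev\|=1$, but yours is direct rather than by contradiction and yields strictly more: the convergence $T_{\tau_k}(\mu)\to\pm 1$ on the eigenvalue support is precisely the forward implication of Theorem~\ref{ch_pgst2}, so your argument effectively unifies the lemma with that theorem. A further small merit is that you justify $\sigma(P)\subseteq[-1,1]$ (via Lemma~\ref{sh12} and the unboundedness of $T_m$ off $[-1,1]$), where the paper simply asserts $|\mu|\le 1$.
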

	\begin{proof}
		Let $m$ be a positive integer and $\mu\in\sigma(P)$.	Note that $|\mu|\leq 1$, and therefore $|T_m(\mu)|\leq1$. We have
		\begin{align}
			\left|\ip{T_m(P)\eu}{\ev}\right|&=\left|\sum_{\mu\in\sigma(P)} T_m(\mu)\ip{E_\mu\eu}{E_\mu\ev}\right| \nonumber\\
			&\leq\sum_{\mu\in\sigma(P)} \left|T_m(\mu)\ip{E_\mu\eu}{E_\mu\ev}\right|\nonumber\\
			&\leq\sum_{\mu\in\sigma(P)} \left|\ip{E_\mu\eu}{E_\mu\ev}\right|\nonumber\\
			&\leq \sum_{\mu\in\sigma(P)} \|E_\mu\eu\|\|E_\mu\ev\|\label{cs1}\\
			&\leq \sqrt{\sum_{\mu\in\sigma(P)} \|E_\mu\eu\|^2} \sqrt{\sum_{\mu\in\sigma(P)} \|E_\mu\ev\|^2}\label{cs2}\\
			&=1\nonumber.
		\end{align}
		For each $\mu\in \sigma(P)$, equality holds in \eqref{cs1} if and only if $E_\mu\eu=\alpha E_\mu\ev$ for  some $\alpha\in\Rl$, and equality holds in \eqref{cs2} if and only if $\|E_\mu\eu\|=\|E_\mu\ev\|$. 
		
		Now, assume that PGST occurs between two vertices $u$ and $v$ of $G$. Suppose, if possible,  $E_\mu\eu\neq E_\mu\ev$ for some $\mu\in\sigma(P)$. Then either $E_\mu\eu\neq\alpha E_\mu\ev$ for all $\alpha\in\Rl$ or $\|E_\mu\eu\|\neq\|E_\mu\ev\|$. Therefore at least one of \eqref{cs1} and \eqref{cs2} must be a strict inequality. Hence $\sum_{\mu\in\sigma(P)} \left|\ip{E_\mu\eu}{E_\mu\ev}\right|<1$. This implies that there exist $\epsilon>0$, we have $\sum_{\mu\in\sigma(P)} \left|\ip{E_\mu\eu}{E_\mu\ev}\right|<1-\epsilon$. Therefore $\left|\ip{T_m(P)\eu}{\ev}\right|<1-\epsilon$ for each $m$. This contradicts the occurrence of PGST. Thus we have the result.
	\end{proof}
	Let the spectral decomposition of $P$ be $P=\sum_{\mu\in\sigma(P)}\mu E_\mu$. The \emph{eigenvalue support} of the vertex $ u $ is defined as $\Theta_u := \Theta_u(P) = \{ \mu\in\sigma(P): E_\mu \mathbf{e}_u \neq 0 \}.$
	The \emph{symmetric} and \emph{antisymmetric} eigenvalue supports between vertices $u$ and $v$ are defined, respectively, by 
	\[
	\Theta_{uv}^+ := \Theta_{uv}^+(P) = \{ \mu \in \Theta_u : E_\mu \mathbf{e}_u = E_\mu \mathbf{e}_v \}
	\]
	and
	\[
	\Theta_{uv}^- := \Theta_{uv}^-(P) = \{ \mu \in \Theta_u : E_\mu \mathbf{e}_u = -E_\mu \mathbf{e}_v \}.
	\]
	Note that  $E_\mu\eu=\pm E_\mu\ev$ for each $\mu\in\sigma(P)$ if and only if $\Theta_u=\Theta_v=\Theta_{uv}^+\cup \Theta_{uv}^-$.
	\begin{theorem}\label{ch_pgst2}
		Let $u$ and $v$ be two vertices of a graph $G$. Then PGST occurs between $u$ and $v$ if and only if the following two conditions hold.
		\begin{enumerate}[label=(\roman*)]
			\item $E_\mu\eu=\pm E_\mu\ev$ for each $\mu\in\sigma(P)$.
			\item For any $\epsilon>0$ there exists a positive integer $\tau$ such that $|T_\tau(\mu)-1|<\epsilon$ if $\mu\in\Theta_{uv}^+$ and $|T_\tau(\mu)+1|<\epsilon$ if $\mu\in\Theta_{uv}^-$.
			%
		\end{enumerate} 
	\end{theorem}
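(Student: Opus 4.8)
The plan is to prove the two implications separately, using Lemma~\ref{dpgst}(iii) as the bridge between PGST and the Chebyshev evaluations $\evt T_\tau(P)\eu$, together with the spectral decomposition \eqref{sd}. The key identity throughout is
\[
\evt T_\tau(P)\eu=\sum_{\mu\in\sigma(P)}T_\tau(\mu)\ip{E_\mu\eu}{E_\mu\ev},
\]
which follows from \eqref{sd} and the idempotency/orthogonality of the projections $E_\mu$.

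For the forward direction, suppose PGST occurs between $u$ and $v$. Condition (i) is exactly the content of Lemma~\ref{pgst_strongly}, so it comes for free. Given (i), the remark preceding the theorem tells us $\sigma(P)\supseteq\Theta_u=\Theta_v=\Theta_{uv}^+\cup\Theta_{uv}^-$, and for $\mu\notin\Theta_u$ the inner product $\ip{E_\mu\eu}{E_\mu\ev}$ vanishes. Writing $c_\mu:=\ip{E_\mu\eu}{E_\mu\ev}=\pm\|E_\mu\eu\|^2$, with $c_\mu>0$ on $\Theta_{uv}^+$ and $c_\mu<0$ on $\Theta_{uv}^-$, the identity above becomes a real sum. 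Since PGST holds, Lemma~\ref{dpgst}(iii) gives, for arbitrary $\epsilon$, a $\tau$ with $\bigl|\sum_\mu T_\tau(\mu)c_\mu-1\bigr|<\epsilon$; because $\sum_\mu c_\mu=\ip{\eu}{\ev}$-style normalization yields $\sum_\mu|c_\mu|=1$ (from the equality analysis in Lemma~\ref{pgst_strongly}) and each $|T_\tau(\mu)|\le1$, the sum $\sum_\mu T_\tau(\mu)c_\mu$ can only approach $1$ if each term $T_\tau(\mu)c_\mu$ approaches $|c_\mu|$, forcing $T_\tau(\mu)\to1$ on $\Theta_{uv}^+$ and $T_\tau(\mu)\to-1$ on $\Theta_{uv}^-$ simultaneously. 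Making this simultaneity quantitative is the one place requiring care: I would argue that $1-\sum_\mu T_\tau(\mu)c_\mu=\sum_\mu(\operatorname{sgn}(c_\mu)-T_\tau(\mu))|c_\mu|$ is a sum of nonnegative terms (since $\operatorname{sgn}(c_\mu)-T_\tau(\mu)\ge0$ when $|T_\tau(\mu)|\le1$), so its smallness forces each term below $\epsilon$, which delivers (ii).

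For the reverse direction, assume (i) and (ii). By (i) and the remark, $\ip{E_\mu\eu}{E_\mu\ev}=0$ off $\Theta_{uv}^+\cup\Theta_{uv}^-$, so using the same real sum and $\sum_\mu|c_\mu|=1$ I estimate
\[
\bigl|\evt T_\tau(P)\eu-1\bigr|=\Bigl|\sum_{\mu\in\Theta_{uv}^+}(T_\tau(\mu)-1)|c_\mu|+\sum_{\mu\in\Theta_{uv}^-}(T_\tau(\mu)+1)|c_\mu|\Bigr|\le\max_\mu\bigl|T_\tau(\mu)\mp1\bigr|.
\]
Condition (ii) supplies a single $\tau$ making every such $|T_\tau(\mu)\mp1|<\epsilon$ across the finite set $\sigma(P)$, so the right-hand side is $<\epsilon$, and Lemma~\ref{dpgst}(iii) then yields PGST between $u$ and $v$.

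The main obstacle is purely bookkeeping rather than conceptual: establishing that $\sum_\mu|c_\mu|=1$ and that the sign of $c_\mu$ matches membership in $\Theta_{uv}^\pm$, so that the "$1$" in the Chebyshev condition and the normalization of the eigenvector projections line up correctly. This hinges on the equality conditions extracted in the proof of Lemma~\ref{pgst_strongly}, namely $\|E_\mu\eu\|=\|E_\mu\ev\|$ together with $\sum_\mu\|E_\mu\eu\|^2=\|\eu\|^2=1$. Once that normalization is in hand, both directions reduce to the elementary fact that a convex-type combination of numbers in $[-1,1]$ with weights summing to $1$ attains a value near $1$ if and only if each active weight's term is near its extreme, which is what couples the symmetric and antisymmetric conditions into a single $\tau$.
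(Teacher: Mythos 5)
Your proposal is correct in substance and follows the same blueprint as the paper's proof: both directions obtain condition (i) from Lemma~\ref{pgst_strongly}, both expand $T_\tau(P)$ spectrally, and both use the fact that the weights attached to the eigenvalues in $\Theta_u$ are bounded below over a finite set to force each Chebyshev value to its extreme. The difference is bookkeeping. The paper works with the vector quantity
\[
\|T_\tau(P)\eu-\ev\|^2=\sum_{\mu\in\Theta_u}\left|T_\tau(\mu)-\alpha_\mu\right|^2\|E_\mu\ev\|^2,\qquad \alpha_\mu\in\{-1,1\},
\]
and invokes the vector form of Lemma~\ref{dpgst}(ii), so nonnegativity of every term is automatic (they are squares); you work with the scalar $\evt T_\tau(P)\eu$ and invoke Lemma~\ref{dpgst}(iii), which obliges you to argue separately that $1-\evt T_\tau(P)\eu$ is a sum of nonnegative terms. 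Both routes are equally short; the sum-of-squares identity just makes the step ``smallness of the sum forces smallness of each term'' frictionless.

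That said, the one step you flagged as requiring care is exactly where your formulas go wrong, and as written they are false. With $c_\mu=\ip{E_\mu\eu}{E_\mu\ev}$ and $\sum_\mu|c_\mu|=1$, the correct decomposition is
\[
1-\evt T_\tau(P)\eu=\sum_{\mu\in\Theta_u}\bigl(1-\operatorname{sgn}(c_\mu)\,T_\tau(\mu)\bigr)|c_\mu|,
\]
whose terms are nonnegative because $|T_\tau(\mu)|\le1$. Your version, $\sum_\mu\bigl(\operatorname{sgn}(c_\mu)-T_\tau(\mu)\bigr)|c_\mu|$, is not equal to $1-\evt T_\tau(P)\eu$: expanding it gives $\sum_\mu c_\mu-\sum_\mu T_\tau(\mu)|c_\mu|$, and $\sum_\mu c_\mu\neq 1$ in general. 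Moreover, your justification ``$\operatorname{sgn}(c_\mu)-T_\tau(\mu)\ge 0$'' fails on $\Theta_{uv}^-$, where $\operatorname{sgn}(c_\mu)=-1$; without termwise nonnegativity the forcing argument collapses, so this is not cosmetic. Similarly, in the reverse direction the sum over $\Theta_{uv}^-$ should enter with a minus sign, though there the triangle inequality absorbs the slip. These are mechanical sign errors rather than missing ideas: the corrected identity supports your argument exactly as intended, including the division by $\min_{\mu\in\Theta_u}|c_\mu|>0$ needed to pass from smallness of each weighted term to condition (ii).
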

	\begin{proof}

		Assume that condition~(i) holds. For each  $\mu \in \Theta_u$, let $\alpha_\mu\in\{-1, 1\}$ be such that
		\[
		E_\mu \eu = \alpha_\mu E_\mu \ev.
		\]
		Then for any positive integer $m$, we compute
		\begin{align*}
			T_m(P)\eu - \ev 
			&= \sum_{\mu \in \Theta_u} T_m(\mu) E_\mu \eu - \sum_{\mu \in \Theta_u} E_\mu \ev \\
			&= \sum_{\mu \in \Theta_u} \left(T_m(\mu) E_\mu \eu - E_\mu \ev\right) \\
			&= \sum_{\mu \in \Theta_u} \left(T_m(\mu)\alpha_\mu - 1\right) E_\mu \ev \\
			&= \sum_{\mu \in \Theta_u} \alpha_\mu\left(T_m(\mu) - \alpha_\mu\right) E_\mu \ev.
		\end{align*}
		Taking norm, we find 
		\begin{equation}\label{tmmm}
			\|T_m(P)\eu - \ev\|^2 = \left\| \sum_{\mu \in \Theta_u} \alpha_\mu\left(T_m(\mu) - \alpha_\mu\right) E_\mu \ev \right\|^2 = \sum_{\mu \in \Theta_u} \left|T_m(\mu) - \alpha_\mu\right|^2 \|E_\mu \ev\|^2.
		\end{equation}
		
		Now suppose that PGST occurs between $u$ and $v$. Then by Lemma~\ref{pgst_strongly}, condition (i) holds, and hence \eqref{tmmm} holds as well. Since PGST occurs between $u$ and $v$, for any $\epsilon>0$ there exists a positive integer $\tau$ such that $\|T_\tau(P)-\ev\|<\beta\epsilon$, where $\beta=\min\{\|E_\mu\ev\|: \mu\in\Theta_u \}$. Then form \eqref{tmmm}, we find that  $\left|T_\tau(\mu) - \alpha_\mu\right|<\epsilon$. Thus condition (ii) holds.

		Conversely, assume that conditions (i) and (ii) hold. From condition (ii) and \eqref{tmmm}, we find that for any $\epsilon>0$ there exists a positive integer $\tau$ such that $\|T_\tau(P)\eu-\ev\|<\epsilon$. Hence PGST occurs between $u$ and $v$.
	\end{proof}
	The following variant of the Kronecker approximation theorem is  useful for giving a simple necessary and sufficient condition for the existence of PGST on graphs.
	\begin{theorem}[{\cite[Theorem~B]{kronecker}}]\label{kronecker}
		Let $\alpha_1, \ldots, \alpha_n$ and $\beta_1, \ldots, \beta_n$ be any real numbers. Then the following statements are equivalent.
		\begin{enumerate}
			\item[(i)] For every $\epsilon>0$, there exists $\{q, p_1, \ldots, p_n\} \subseteq \mathbb{Z}$ such that
			$$
			|q\alpha_r - \beta_r - p_r| < \epsilon\quad \text{for each } r \in\{ 1,  \ldots, n\}.
			$$
			
			\item[(ii)] For any set $\{\ell_1, \ldots, \ell_n\}$ of integers, if
			$$
			\ell_1 \alpha_1 + \cdots + \ell_n \alpha_n \in \mathbb{Z}
			$$
			then
			$$
			\ell_1 \beta_1 + \cdots + \ell_n \beta_n \in \mathbb{Z}.
			$$
		\end{enumerate}
	\end{theorem}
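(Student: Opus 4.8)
The plan is to recast both statements as assertions about the $n$-torus $\mathbb{T}^n=\mathbb{R}^n/\mathbb{Z}^n$ and then invoke annihilator duality for closed subgroups. Write $\bar\alpha$ for the image of $(\alpha_1,\dots,\alpha_n)$ in $\mathbb{T}^n$ and $\bar\beta$ for the image of $(\beta_1,\dots,\beta_n)$. Condition~(i) is exactly the assertion that $\bar\beta$ lies in the closure $H:=\overline{\langle\bar\alpha\rangle}$ of the cyclic subgroup generated by $\bar\alpha$, since the inequalities $|q\alpha_r-\beta_r-p_r|<\epsilon$ say that the point $q\bar\alpha$ is within $\epsilon$ of $\bar\beta$ coordinatewise modulo $1$. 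The characters of $\mathbb{T}^n$ are precisely $\chi_\ell(x_1,\dots,x_n)=\exp\!\big(2\pi i\sum_r\ell_r x_r\big)$ for $\ell=(\ell_1,\dots,\ell_n)\in\mathbb{Z}^n$, and $\chi_\ell(\bar\alpha)=1$ iff $\sum_r\ell_r\alpha_r\in\mathbb{Z}$; a continuous character that is trivial on the generator is trivial on $H$, so the relations $\sum_r\ell_r\alpha_r\in\mathbb{Z}$ appearing in~(ii) are exactly those defining the annihilator $H^{\perp}=\{\ell\in\mathbb{Z}^n:\chi_\ell|_{H}\equiv 1\}$.

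For the implication (i)$\Rightarrow$(ii) I would argue directly. Suppose $\sum_r\ell_r\alpha_r=N\in\mathbb{Z}$ and fix $\epsilon>0$; choosing $q,p_1,\dots,p_n$ as in~(i) and forming the integer combination with coefficients $\ell_r$ gives
\begin{equation*}
\Big|\,qN-\sum_r \ell_r p_r-\sum_r\ell_r\beta_r\,\Big|=\Big|\sum_r\ell_r\big(q\alpha_r-p_r-\beta_r\big)\Big|\le \epsilon\sum_r|\ell_r|.
\end{equation*}
Since $qN-\sum_r\ell_r p_r\in\mathbb{Z}$, the fixed real number $\sum_r\ell_r\beta_r$ lies within $\epsilon\sum_r|\ell_r|$ of an integer for every $\epsilon>0$; letting $\epsilon\to 0$ forces $\sum_r\ell_r\beta_r\in\mathbb{Z}$, which is~(ii).

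The substantive direction is (ii)$\Rightarrow$(i), and this is where I expect the main difficulty. In the torus language, (ii) says precisely that $\chi_\ell(\bar\beta)=1$ for every $\ell\in H^{\perp}$, that is, $\bar\beta\in(H^{\perp})^{\perp}$; the goal is to upgrade this to $\bar\beta\in H$, i.e.\ to establish $(H^{\perp})^{\perp}=H$. This is the bipolar (annihilator) duality for closed subgroups of the compact abelian group $\mathbb{T}^n$. Concretely, the contrapositive route I would take is: if $\bar\beta\notin H$, then the image of $\bar\beta$ in the compact Hausdorff quotient $\mathbb{T}^n/H$ is nonzero, so some character of $\mathbb{T}^n/H$ is nontrivial on it; pulling this character back along the quotient map produces a $\chi_\ell$ that is trivial on $H$ (hence $\ell\in H^{\perp}$) yet satisfies $\chi_\ell(\bar\beta)\neq 1$, contradicting~(ii). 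The crux is therefore the separation-of-points property of characters on a compact abelian group, which is the one genuinely nonelementary input.

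Finally, I would note that this duality step can be made self-contained in at least two ways, avoiding an appeal to the full Pontryagin machinery. One route classifies the closed subgroups of $\mathbb{T}^n$ directly: each is a finite abelian group times a subtorus, cut out by finitely many integral relations, and these relations are exactly the $\ell$ with $\sum_r\ell_r\alpha_r\in\mathbb{Z}$; describing $H$ explicitly this way yields $(H^{\perp})^{\perp}=H$ by an integer-linear-algebra computation. The other route is Fourier-analytic: one shows via a Fej\'er-kernel averaging argument that the normalized exponential sums $\frac{1}{2Q+1}\sum_{|q|\le Q}\chi_\ell(q\bar\alpha)$ vanish in the limit for every $\ell\notin H^{\perp}$, so that under hypothesis~(ii) the averaged measures concentrate near $\bar\beta$, producing the required approximants. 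In every approach the easy half is (i)$\Rightarrow$(ii), and the real work lies in extracting, from the failure of approximation, an integral relation among the $\alpha_r$ that is violated by the $\beta_r$.
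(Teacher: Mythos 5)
The paper contains no proof of this statement for you to be compared against: it is imported verbatim as Theorem~B of the cited Gonek--Montgomery article and used as a black box. Your proposal must therefore be judged on its own merits, and on those merits it is correct, giving an actual proof where the paper gives only a citation.

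Your translation to the torus is accurate: condition (i) says exactly that $\bar\beta$ lies in the closed subgroup $H=\overline{\langle\bar\alpha\rangle}\subseteq\mathbb{T}^n$ (since $q$ ranges over all of $\mathbb{Z}$, the set $\{q\bar\alpha : q\in\mathbb{Z}\}$ is the cyclic subgroup generated by $\bar\alpha$), and the integer relations in (ii) are exactly the annihilator $H^{\perp}$, because a character trivial at $\bar\alpha$ is trivial on the closed subgroup it generates. Your (i)$\Rightarrow$(ii) argument is complete and elementary: the distance from the fixed number $\sum_r\ell_r\beta_r$ to $\mathbb{Z}$ is bounded by $\epsilon\sum_r|\ell_r|$ for every $\epsilon>0$, hence is zero. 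The direction (ii)$\Rightarrow$(i) is correctly reduced to the bipolar identity $(H^{\perp})^{\perp}=H$, and your contrapositive through the compact Hausdorff quotient $\mathbb{T}^n/H$ is the standard way to obtain it; the single nonelementary input, namely that continuous characters of a compact abelian group separate points (so that a character of $\mathbb{T}^n/H$ nontrivial on the image of $\bar\beta$ exists and pulls back to some $\chi_\ell$ with $\ell\in H^{\perp}$ but $\chi_\ell(\bar\beta)\neq 1$), is a genuinely deep but entirely textbook fact (Peter--Weyl, or the classification of closed subgroups of $\mathbb{T}^n$ as a subtorus times a finite group), and you flag this dependency explicitly rather than hide it. Both of your sketched routes to self-containment are viable; the averaging route in particular needs only Stone--Weierstrass plus the observation that weak-$*$ limit points of the measures $\frac{1}{2Q+1}\sum_{|q|\le Q}\delta_{q\bar\alpha-\bar\beta}$ have the Fourier coefficients $\mathbf{1}_{\ell\in H^{\perp}}$ under hypothesis (ii), forcing positive mass near $0$. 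In short: what your approach buys is a conceptual, essentially self-contained proof of a result the paper simply outsources, at the cost of one standard harmonic-analysis black box; a referee would accept it as written, though for full rigor one of your two completion sketches (most cleanly the subgroup-classification one) should be carried out or replaced by a precise reference to the bipolar theorem for locally compact abelian groups.
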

	We now provide a number-theoretic characterization of the occurrence of PGST between two vertices of a graph.
	\begin{theorem}\label{pgst_graphs}
		Let $u$ and $v$ be two vertices of a graph $G$. Then PGST occurs between $u$ and $v$ on $G$ if and only if all of the following conditions hold.
		\begin{enumerate}[label=(\roman*)]
			\item $E_\mu\eu=\pm E_\mu\ev$ for each $\mu\in\sigma(P)$.
			\item For any set $\{l_\mu:\mu\in \Theta_u\}$ of integers, if
			$$\sum_{\mu\in \Theta_u}l_\mu\arccos\mu\equiv 0\pmod{2\pi}$$
			then
			$$\sum_{\mu\in\Theta_{uv}^-}l_\mu\equiv 0\pmod{2}.$$
		\end{enumerate} 
	\end{theorem}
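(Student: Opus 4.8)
The plan is to reduce everything to Theorem~\ref{ch_pgst2} and then convert its analytic approximation condition into the stated number-theoretic one using the Kronecker approximation theorem (Theorem~\ref{kronecker}). Since condition~(i) is literally the same in Theorem~\ref{ch_pgst2} and in the present theorem, it suffices to show that, assuming~(i) holds, condition~(ii) of Theorem~\ref{ch_pgst2} is equivalent to condition~(ii) here. First I would parametrize the spectrum: because $P$ is symmetric with $|\mu|\leq 1$ for every $\mu\in\sigma(P)$, I set $\theta_\mu=\arccos\mu\in[0,\pi]$, so that $T_\tau(\mu)=\cos(\tau\theta_\mu)$. Using continuity of cosine together with the fact that $\cos$ attains the value $1$ exactly on $2\pi\Zl$ and the value $-1$ exactly on $\pi+2\pi\Zl$, the approximation requirement of Theorem~\ref{ch_pgst2}(ii) — namely that for each $\epsilon$ there is a positive integer $\tau$ driving $T_\tau(\mu)$ within $\epsilon$ of $1$ for $\mu\in\Theta_{uv}^+$ and within $\epsilon$ of $-1$ for $\mu\in\Theta_{uv}^-$ — is equivalent to the statement that $\tau\theta_\mu$ can be made arbitrarily close, modulo $2\pi$, to $0$ when $\mu\in\Theta_{uv}^+$ and to $\pi$ when $\mu\in\Theta_{uv}^-$.

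Next I would introduce, indexed over $\mu\in\Theta_u$, the real numbers $\alpha_\mu=\tfrac{\arccos\mu}{2\pi}$ together with $\beta_\mu=0$ for $\mu\in\Theta_{uv}^+$ and $\beta_\mu=\tfrac12$ for $\mu\in\Theta_{uv}^-$. Under this choice the condition ``$\tau\theta_\mu$ close to $2\pi\beta_\mu$ modulo $2\pi$'' becomes exactly ``$\tau\alpha_\mu-\beta_\mu$ close to an integer,'' which is precisely statement~(i) of Theorem~\ref{kronecker} with $q=\tau$. Applying Theorem~\ref{kronecker}, this is equivalent to its statement~(ii). A direct computation then identifies the latter with the present theorem's condition~(ii): since $\sum_\mu l_\mu\alpha_\mu=\tfrac{1}{2\pi}\sum_{\mu\in\Theta_u}l_\mu\arccos\mu$, the hypothesis $\sum_\mu l_\mu\alpha_\mu\in\Zl$ is the same as $\sum_{\mu\in\Theta_u}l_\mu\arccos\mu\equiv 0\pmod{2\pi}$, and since $\sum_\mu l_\mu\beta_\mu=\tfrac12\sum_{\mu\in\Theta_{uv}^-}l_\mu$, the conclusion $\sum_\mu l_\mu\beta_\mu\in\Zl$ is the same as $\sum_{\mu\in\Theta_{uv}^-}l_\mu\equiv 0\pmod 2$.

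The main obstacle I anticipate is a positivity mismatch: Theorem~\ref{ch_pgst2}(ii) demands a \emph{positive} integer $\tau$, whereas Theorem~\ref{kronecker} only furnishes an integer $q$, possibly zero or negative. In the forward direction (PGST implies the conditions) this is harmless, since a positive $\tau$ is in particular an integer, so Theorem~\ref{ch_pgst2}(ii) immediately yields statement~(i) of Theorem~\ref{kronecker}, and Theorem~\ref{kronecker} then delivers condition~(ii). The backward direction is where the obstacle must be handled carefully. I would first note that, because $u\neq v$ and condition~(i) holds, the set $\Theta_{uv}^-$ is nonempty: otherwise $\eu=\sum_{\mu}E_\mu\eu=\sum_{\mu}E_\mu\ev=\ev$, contradicting $u\neq v$. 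Consequently, for any $\epsilon<\tfrac12$ the integer $q$ produced by Theorem~\ref{kronecker} cannot be $0$, since for some $\mu\in\Theta_{uv}^-$ the value $\beta_\mu=\tfrac12$ can never lie within $\tfrac12$ of an integer.

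Finally, given such a nonzero $q$, I would set $\tau=|q|>0$ and invoke the evenness of cosine, equivalently $T_{|q|}(\mu)=\cos(|q|\theta_\mu)=\cos(q\theta_\mu)=T_q(\mu)$, so that the approximation is preserved with a genuine positive integer in place of $q$. This recovers condition~(ii) of Theorem~\ref{ch_pgst2}, and combining it with condition~(i) and applying Theorem~\ref{ch_pgst2} produces PGST between $u$ and $v$. Assembling both directions completes the equivalence.
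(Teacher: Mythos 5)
Your proposal is correct and follows essentially the same route as the paper: reduce to Theorem~\ref{ch_pgst2} under condition~(i), rewrite $T_\tau(\mu)=\cos(\tau\arccos\mu)$ to turn the approximation requirement into the form of Theorem~\ref{kronecker} with $\alpha_\mu=\frac{\arccos\mu}{2\pi}$ and $\beta_\mu\in\{0,\tfrac12\}$, and then translate Kronecker's condition into the stated congruence. Your explicit treatment of the integer-versus-positive-integer issue (using that $\Theta_{uv}^-\neq\emptyset$ forces $q\neq 0$ for $\epsilon<\tfrac12$, then replacing $q$ by $|q|$ via evenness of cosine) is a point the paper's proof silently glosses over, and it is handled correctly.
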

	\begin{proof}
		It suffices to show that under condition (i),  PGST occurs from $u$ to $v$ if and only if condition (ii) holds. Assume that condition (i) holds. For each  $\mu\in \Theta_u$, let $\alpha_\mu\in\{0,1\}$ such that $$E_\mu\eu=(-1)^{\alpha_\mu}E_\mu\ev.$$
		By Theorem~\ref{ch_pgst2}, PGST occurs from $u$ to $v$ if and only if for any $\epsilon>0$, there exists $\tau\in\Nl$ such that 
		$$|T_\tau(\mu)-\cos(\alpha_\mu\pi)|<\epsilon~\text{for each $\mu\in \Theta_u$}.$$
		Note that $T_\tau(\mu)=\cos(\tau\arccos\mu)$. Thus, PGST occurs from $u$ to $v$ if and only if  for any $\epsilon>0$, there exists $\{\tau\}\cup\{k_\mu:\mu\in\Theta_u\}\subseteq\Zl$ such that 
		\begin{equation}\label{cosss}
			\left|\tau\frac{\arccos \mu}{2\pi}-\frac{\alpha_\mu}{2}-k_\mu\right|<\epsilon\quad\text{for each }\mu\in\Theta_u.
		\end{equation}
		By Theorem~\ref{kronecker}, the inequalities \eqref{cosss} hold if and only if for any set of integers $\{l_\mu:\mu\in\Theta_u\}$ such that 
		$$\sum_{\mu\in \Theta_u}l_\mu\arccos\mu\equiv 0\pmod{2\pi},$$
		we have
		\[\sum_{\mu\in\Theta_u}l_\mu\alpha_\mu=\sum_{\mu\in\Theta_{uv}^-}l_\mu\equiv 0\pmod 2.\]
		Hence the proof is complete.
	\end{proof}
	We now apply the preceding result to characterize  complete graphs exhibiting PGST.
	\begin{lema}
		A complete graph $K_n$ exhibits PGST if and only if $n=2$.
	\end{lema}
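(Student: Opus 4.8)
The plan is to compute the discriminant matrix of $K_n$ explicitly, read off its spectral idempotents, and then apply the necessary support condition~(i) of Theorem~\ref{pgst_graphs} (equivalently Lemma~\ref{pgst_strongly}). Since $K_n$ is $(n-1)$-regular with adjacency matrix $A=J-I$, where $J$ is the all-ones matrix, the relation $P=\frac{1}{r}A$ valid for $r$-regular graphs gives $P=\frac{1}{n-1}(J-I)$. The eigenvalues of $J$ are $n$ (simple, with eigenvector $\jb$) and $0$, so $\sigma(P)=\{1,\theta\}$ with $\theta=-\frac{1}{n-1}$. The corresponding orthogonal projections are $E_1=\frac{1}{n}J$ (onto the span of $\jb$) and $E_\theta=I-\frac{1}{n}J$ (onto its orthogonal complement).

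Next I would test condition~(i) for a pair of distinct vertices $u,v$. For the simple eigenvalue $1$ one always has $E_1\eu=\frac{1}{n}\jb=E_1\ev$, so $1\in\Theta_{uv}^+$ irrespective of $n$. The decisive eigenvalue is $\theta$, for which $E_\theta\eu=\eu-\frac{1}{n}\jb$ and $E_\theta\ev=\ev-\frac{1}{n}\jb$. Suppose $n\ge 3$ and choose a vertex $w\notin\{u,v\}$. The $w$-coordinate of both $E_\theta\eu$ and $E_\theta\ev$ equals $-\frac{1}{n}\neq 0$; hence $E_\theta\eu=E_\theta\ev$ fails (the two vectors already differ at coordinate $u$), while $E_\theta\eu=-E_\theta\ev$ fails because it would force $-\frac{1}{n}=\frac{1}{n}$ at coordinate $w$. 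Thus $E_\theta\eu\neq\pm E_\theta\ev$, condition~(i) of Theorem~\ref{pgst_graphs} is violated for every pair $u,v$, and $K_n$ admits no PGST whenever $n\ge 3$.

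It remains to settle $n=2$, which I would do directly. For $K_2$ the two symmetric arcs are $(u,v)$ and $(v,u)$, and a short computation gives $N^*N=I$, so $C=2N^*N-I=I$ and $U=RC=R$; consequently $U N^*\eu=N^*\ev$, i.e.\ PST occurs from $u$ to $v$ by Lemma~\ref{zhan_pst}, which in particular yields PGST. (Alternatively, one may note $\sigma(P)=\{1,-1\}$ with $\Theta_{uv}^+=\{1\}$ and $\Theta_{uv}^-=\{-1\}$, and verify condition~(ii) of Theorem~\ref{pgst_graphs}: since $\arccos 1=0$ and $\arccos(-1)=\pi$, any integer relation $l_{-1}\pi\equiv 0\pmod{2\pi}$ already forces $l_{-1}$ even.) Combining the two cases gives PGST on $K_n$ exactly when $n=2$.

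The computation is essentially a direct spectral one, so I do not anticipate a genuine obstacle; the only points requiring care are correctly identifying the non-principal idempotent as $E_\theta=I-\frac{1}{n}J$ and confirming that the $n=2$ case really delivers PGST rather than merely passing the support test~(i), which is why I would supply the explicit $U=R$ computation there.
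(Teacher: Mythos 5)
Your proposal is correct and takes essentially the same approach as the paper: the identical spectral decomposition of $P(K_n)$ with idempotents $\frac{1}{n}J$ and $I-\frac{1}{n}J$, the same use of the support condition $E_\mu\eu=\pm E_\mu\ev$ to rule out $n\geq 3$ (you merely spell out the coordinate check the paper leaves implicit), and for $n=2$ your parenthetical alternative is exactly the paper's verification of condition~(ii) of Theorem~\ref{pgst_graphs}. Your primary $n=2$ argument via the direct computation $U=R$ (giving PST, hence PGST) is a harmless and correct addition.
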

	\begin{proof}
		The spectral decomposition of the discriminant $P$ of  $K_n$ is 
		$$P=1\cdot E_1+ \left(\frac{1}{1-n}\right)E_\frac{1}{1-n},$$
		where $E_1=\frac{1}{n}J_n$ and $E_\frac{1}{1-n}=I_n-\frac{1}{n}J_n$. If $n\geq 3$, then $E_\frac{1}{1-n}\eu\neq \pm E_\frac{1}{1-n}\ev$ for $u\neq v$. Thus if $K_n$ exhibits PGST, then $n=2$.
		
		Note that $\sigma\left(P(K_2)\right)=\{-1,1\}$ and $\Theta_{uv}^-=\{-1\}$. Let  $\ell_1$ and $\ell_{-1}$ be two integers. Then one can easily  check that if  $\ell_1 \arccos (1)$ + $\ell_{-1} \arccos(-1)\equiv 0\pmod{2\pi}$ then  $\ell_{-1}\equiv 0\pmod2$. Thus by Theorem~\ref{pgst_graphs}, $K_2$ exhibits PGST.
	\end{proof}
	\section{PGST on abelian Cayley graphs}
	Let $(\Gamma, +)$ be a finite abelian group with identity element $\mathbf{0}$. Let $S \subset \Gamma \setminus \{\mathbf{0}\}$  with $S = S^{-1}$ and $S$ generates $\Gamma$. Then the Cayley graph $G:=\cay(\Gamma,S)$ is defined by $V(G)=\Gamma$ and $E(G)=\{uv: u,v\in\Gamma, u-v\in S\}$. 
	
	Let $\hat{\Gamma}$ denote the character group of $\Gamma$. It is well known that the eigenvalues and eigenvectors of  $\cay(\Gamma, S)$ can be determined using the characters of $\Gamma$. We now describe these characters explicitly. By the fundamental theorem of abelian groups, $\Gamma$ decomposes into a direct sum of cyclic groups:
	\begin{equation*}
		\Gamma=\mathbb{Z}_{n_1}\oplus \cdots\oplus \mathbb{Z}_{n_k}\ \ (n_r\geq 2).
	\end{equation*}
	For each $a=(a_1,\cdots,a_k)\in \Gamma$, define the map $\chi_a:\Gamma\rightarrow \mathbb{C}$ by
	\begin{equation}\label{character}
		\chi_a(b)=\prod_{r=1}^k\omega_{n_r}^{a_rb_r} \quad \text{for } b=(b_1,\cdots,b_k)\in \Gamma,
	\end{equation}
	where $\omega_{n_r}=\exp(2\pi i/n_r)$. Each such map $\chi_a$ is a character of $\Gamma$. Moreover, the map $\Gamma \to \hat{\Gamma}$ given by $a \mapsto \chi_a$ is a group isomorphism. Hence, the character group is given by $\hat{\Gamma} = \{\chi_a \mid a \in \Gamma\}$, and the characters satisfy the symmetry $\chi_a(b) = \chi_b(a)$ for all $a, b \in \Gamma$.
	
	For each $a\in\Gamma$, the adjacency matrix $A$ of $\cay(\Gamma,S)$ has eigenvalue 
	$\ld_a=\sum_{s\in S}\chi_a(s)$ with corresponding eigenvector  $\psi_a=\frac{1}{\sqrt{|\Gamma|}}[\chi_a(x)]^t_{x\in\Gamma}$. We refer to \cite{rep} for more details about the eigenvalues and eigenvectors of Cayley graphs.

	Let $n=|\Gamma|$ and $s=|S|$.  Then for each $a\in\Gamma$, the discriminant matrix $P$ of  $\cay(\Gamma,S)$ has eigenvalue 
	\begin{equation}\label{evalue_cayley}
		\mu_a=\frac{1}{s}\sum_{s\in S}\chi_a(s)
	\end{equation}
	with the corresponding unit eigenvector
	\begin{equation}\label{evector_cayley}
		\psi_a=\frac{1}{\sqrt{n}}[\chi_a(x)]^t_{x\in\Gamma}.
	\end{equation}
	Thus  the spectral decomposition of $P$ is 
	$P=\sum_{a\in\Gamma}\mu_a\psi_a\psi_a^*.$
	Moreover, if $f(x)$ is any polynomial, then we have
	\begin{equation}\label{sd_cayley}
		f(P)=\sum_{a\in\Gamma}f(\mu_a)\psi_a\psi_a^*.
	\end{equation}
	\begin{lema}\label{pgst_permutation}
		The Cayley graph $\cay(\Gamma,S)$ exhibits PGST  if and only if there exists a sequence of positive integers $\{\tau_k\}$ such that $$\lim_{k \to \infty} T_{\tau_k}(P) = B,$$
		where  $B$ is a permutation matrix of order $2$ with no fixed points.
	\end{lema}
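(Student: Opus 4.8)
The plan is to exploit the vertex-transitivity of Cayley graphs, realised concretely through translation automorphisms, together with the Chebyshev characterization of PGST in Lemma~\ref{dpgst}. For each $g\in\Gamma$ the translation $\eta_g\colon x\mapsto x+g$ is an automorphism of $\cay(\Gamma,S)$; write $M_g$ for its permutation matrix, so that $M_g\mathbf{e}_x=\mathbf{e}_{x+g}$. By Lemma~\ref{per1} we have $M_gP=PM_g$, and hence $M_gT_m(P)=T_m(P)M_g$ for every positive integer $m$. This single commuting relation is the engine of the whole argument.

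For the forward direction, suppose PGST occurs, say between vertices $u$ and $v$, and set $d:=v-u\neq\mathbf{0}$. By Lemma~\ref{dpgst} there is a sequence $\{\tau_k\}$ with $T_{\tau_k}(P)\eu\to\ev$. Applying $M_g$ and using the commuting relation gives
\[
T_{\tau_k}(P)\mathbf{e}_{u+g}=M_gT_{\tau_k}(P)\eu\longrightarrow M_g\ev=\mathbf{e}_{v+g}\qquad(g\in\Gamma).
\]
Since translations act transitively, every vertex has the form $u+g$, so the entrywise limit of $T_{\tau_k}(P)$ exists along this sequence and equals the permutation matrix $B:=M_d$, sending $\mathbf{e}_x\mapsto\mathbf{e}_{x+d}$. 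I then read off the two required properties of $B$ purely from symmetry: each $T_{\tau_k}(P)$ is symmetric because $P$ is, so the limit $B$ is symmetric; but a symmetric permutation matrix satisfies $B=B^{t}=B^{-1}$, whence $B^2=I$, i.e.\ $M_{2d}=I$ and $2d=\mathbf{0}$. Because $d\neq\mathbf{0}$, the map $x\mapsto x+d$ has no fixed points, so $B$ is a fixed-point-free involution, as claimed.

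The converse is immediate from Lemma~\ref{dpgst}. If $T_{\tau_k}(P)\to B$ for some order-$2$ permutation matrix $B$ with no fixed points, pick any vertex $u$ and let $v$ be the vertex with $B\eu=\ev$; fixed-point-freeness forces $v\neq u$. Reading off the $u$-th column gives $T_{\tau_k}(P)\eu\to\ev$, and Lemma~\ref{dpgst} then yields PGST from $u$ to $v$, so $\cay(\Gamma,S)$ exhibits PGST.

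The only genuinely delicate point is the forward direction, specifically the step upgrading PGST between one pair to convergence of the \emph{entire} matrix $T_{\tau_k}(P)$ along a \emph{common} sequence $\{\tau_k\}$: this is exactly where vertex-transitivity is needed, and the commuting relation $M_gT_{\tau_k}(P)=T_{\tau_k}(P)M_g$ is what guarantees that the same sequence transporting $\eu$ to $\ev$ simultaneously transports every $\mathbf{e}_{u+g}$ to $\mathbf{e}_{v+g}$. The involution and fixed-point-free properties are then forced, respectively, by the symmetry of $P$ and by $u\neq v$. No explicit computation with the character eigenvalues \eqref{evalue_cayley} is required for this lemma; those will be used only in the subsequent number-theoretic characterization.
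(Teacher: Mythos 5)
Your proof is correct, and it takes a noticeably cleaner route than the paper's. The paper starts from the scalar form of Lemma~\ref{dpgst} (namely $\evt T_{x_k}(P)\eu\to 1$), invokes the bound from Lemma~\ref{sh12} so that compactness yields a \emph{subsequence} $\{\tau_k\}$ along which the whole matrix $T_{\tau_k}(P)$ converges to some matrix $B$, and only afterwards identifies $B$: translation invariance of the entries of polynomials in $A$ (that is, $\mathbf{e}^t_{g+u}T_m(P)\mathbf{e}_{g+v}=\eut T_m(P)\ev$ for all $g\in\Gamma$) together with the symmetry of $T_m(P)$ and the norm bound forces $B$ to be a fixed-point-free involution. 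You instead work with the vector form of Lemma~\ref{dpgst} and use the commutation relation $M_gT_m(P)=T_m(P)M_g$ from Lemma~\ref{per1} to transport the single column limit $T_{\tau_k}(P)\eu\to\ev$ to every column simultaneously, so the \emph{original} sequence converges, with each column identified from the outset as a standard basis vector; this pins down $B=M_{v-u}$ with no compactness argument and no need for the norm estimate that the paper implicitly uses to see that the limit has exactly one nonzero entry per column. The two mechanisms are two faces of the same translation symmetry (entrywise invariance versus commutation with translation matrices), and in both proofs symmetry of $P$ gives $B^2=I$ while $u\neq v$ gives fixed-point-freeness; the converse direction is the same one-line application of Lemma~\ref{dpgst} in both. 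As a small bonus, your argument shows that any sequence witnessing PGST between a single pair already makes the full matrix converge, which is slightly stronger than what the paper's subsequence extraction delivers.
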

	\begin{proof}
		Suppose $\cay(\Gamma,S)$ exhibits PGST between the vertices $u$ and $v$. By Lemma~\ref{dpgst}, there exists a sequence of positive integers $\{x_k\}$ such that $\lim_{k\to\infty}\evt T_{x_k}(P)\eu=1.$ By Lemma~\ref{sh12},  $|\evt T_m(P)\eu|\leq1$ for any positive integer $m$, and vertices $u$ and $v$. Therefore  there exists a sub-sequence $\{\tau_k\}$ of $\{x_k\}$ such that the sequence $\{T_{\tau_k}(P)\}$ of matrices converges  to a matrix $B$ with $\evt B\eu=1$. We now show that $B$ is a permutation matrix of order $2$ with no fixed points. Since $T_m(P)$ is a polynomial in $A$ and $A$ satisfies $\mathbf{e}^t_{g+u}A\mathbf{e}_{g+v} =\mathbf{e}^t_{u}A\mathbf{e}_{v}$ for each $g\in\Gamma$, it follows that $\mathbf{e}^t_{g+u}T_m(P)\mathbf{e}_{g+v} =\mathbf{e}^t_{u}T_m(P)\mathbf{e}_{v}$. Therefore $\mathbf{e}^t_{g+u}B\mathbf{e}_{g+v}=\mathbf{e}^t_{u}B\mathbf{e}_{v}$ for each $g\in\Gamma$. This, along with the symmetry of $T_m(P)$, implies that $B$ is a permutation matrix of order $2$ with no fixed points.
		
		Conversely, if $T_{\tau_k}(P)$ converges to a permutation matrix $B$ with no fixed points, then PGST occurs between a pair of vertices $u$ and $v$, where $\evt B\eu=1$.
	\end{proof}	
	\begin{lema}\label{ord_v-u}
		If $\cay(\Gamma,S)$ exhibits PGST between the  vertices $u$ and $v$, then the order of $v-u$ in $\Gamma$ is $2$.
	\end{lema}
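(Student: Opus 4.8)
The plan is to read the order of $v-u$ directly off the limiting matrix supplied by Lemma~\ref{pgst_permutation}. Suppose $\cay(\Gamma,S)$ exhibits PGST between $u$ and $v$. By Lemma~\ref{pgst_permutation} there is a sequence $\{\tau_k\}$ of positive integers with $T_{\tau_k}(P)\to B$, where $B$ is a (symmetric) permutation matrix of order $2$ with no fixed points; moreover, as noted in the proof of that lemma, the PGST being between $u$ and $v$ forces $\evt B\eu=1$. My first step is therefore to pin down the precise combinatorial shape of $B$.

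The key observation is that $B$ is translation invariant. Indeed, the proof of Lemma~\ref{pgst_permutation} already establishes $\mathbf{e}^t_{g+x}T_m(P)\mathbf{e}_{g+y}=\mathbf{e}^t_{x}T_m(P)\mathbf{e}_{y}$ for every $g\in\Gamma$ and every $m$, so passing to the limit gives $\mathbf{e}^t_{g+x}B\,\mathbf{e}_{g+y}=\mathbf{e}^t_{x}B\,\mathbf{e}_{y}$. Hence the entry $B_{x,y}$ depends only on the difference $x-y$, say $B_{x,y}=b(x-y)$ for some function $b\colon\Gamma\to\Cl$. Since $B$ is a $0$--$1$ permutation matrix, each row contains exactly one $1$; but as $y$ ranges over $\Gamma$ the difference $x-y$ also ranges over all of $\Gamma$, so $b$ takes the value $1$ at exactly one element $z_0\in\Gamma$ and $0$ elsewhere. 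Consequently $B_{x,y}=1$ precisely when $x-y=z_0$, that is, $B$ is the permutation matrix of the translation $y\mapsto y+z_0$.

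It then remains to extract the order of $z_0$ and match it with $v-u$. That $B$ has no fixed points means $y+z_0\neq y$ for all $y$, so $z_0\neq\mathbf{0}$; that $B$ has order $2$ means $y+2z_0=y$ for all $y$, so $2z_0=\mathbf{0}$ (equivalently, symmetry of $B$ gives $b(z)=b(-z)$ and hence $-z_0=z_0$). Thus $z_0$ has order exactly $2$ in $\Gamma$. Finally, the condition $\evt B\eu=1$ reads $b(v-u)=1$, whence $v-u=z_0$, and therefore $v-u$ has order $2$, as claimed. The only delicate point I would justify most carefully is the identification of $B$ with a single group translation: it is exactly the combination of translation invariance with the permutation-matrix property that collapses the a priori arbitrary fixed-point-free involution into translation by one fixed element $z_0$, after which the order of $v-u$ is read off immediately.
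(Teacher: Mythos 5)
Your proof is correct, but it takes a different route from the paper's. The paper never touches the limiting matrix $B$: it first uses translation invariance to rewrite PGST between $u$ and $v$ as $\lim_{k\to\infty}T_{\tau_k}(P)\mathbf{e}_{\mathbf{0}}=\mathbf{e}_{v-u}$, then applies the negation automorphism $\eta(g)=-g$, which fixes $\mathbf{0}$; since $M_\eta$ commutes with $T_m(P)$ (Lemma~\ref{per1}) and a convergent sequence has a unique limit, it concludes $M_\eta\mathbf{e}_{v-u}=\mathbf{e}_{v-u}$, i.e.\ $-(v-u)=v-u$. This is the same uniqueness-of-limits trick as in Lemma~\ref{aut_pgst2}, specialized to one well-chosen automorphism, and it is shorter than your argument. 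Your route instead unpacks the combinatorial structure of the limit $B$ supplied by Lemma~\ref{pgst_permutation}: translation invariance plus the permutation property collapse $B$ to the translation $y\mapsto y+z_0$, and then $v-u=z_0$ is read off from $\mathbf{e}^t_vB\mathbf{e}_u=1$. This buys something the paper's proof does not state: it identifies the limit explicitly as translation by $v-u$, so PGST, when it occurs, occurs simultaneously between every pair $\{x,\,x+(v-u)\}$, not just between $u$ and $v$. Two small caveats: you lean on the facts that $\mathbf{e}^t_vB\mathbf{e}_u=1$ and that $T_m(P)$ is translation invariant, which live in the \emph{proof} of Lemma~\ref{pgst_permutation} rather than in its statement, so a fully self-contained write-up should rederive them (both are immediate: the first from Lemma~\ref{dpgst} and the subsequence construction, the second from $P=\frac{1}{s}A$ and the translation invariance of $A$); also note that fixed-point-freeness of $B$ is not even needed for your conclusion, since $z_0=v-u\neq\mathbf{0}$ already follows from $u\neq v$.
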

	\begin{proof}
		Note that for any positive integer $m$, we have $\evt T_m(P)\eu= \mathbf{e}_{v-u}^tT_m(P)\mathbf{e}_{\mathbf{0}}$. Thus by Lemma~\ref{dpgst}, there exists a sequence of positive integer $\{\tau_k\}$ such that
		\begin{equation}\label{invarient}
			\lim_{k\to\infty}T_{\tau_k}(P)\mathbf{e}_\mathbf{0}=\mathbf{e}_{v-u}.
		\end{equation} 
		Consider the automorphism $\eta$ of $\cay(\Gamma,S)$ such that $\eta(g)=-g$ for each $g\in\Gamma$. Then $M_\eta\mathbf{e}_\mathbf{0}=\mathbf{e}_\mathbf{0}$. Thus by \eqref{invarient}, $\lim_{k\to\infty}T_{\tau_k}(P)\mathbf{e}_\mathbf{0}=M_\eta\mathbf{e}_{v-u}.$ Since $\{T_{\tau_k}(P)\mathbf{e}_\mathbf{0}\}$ cannot have two limits, it follows that $M_\eta\mathbf{e}_{v-u}=\mathbf{e}_{v-u}$. This implies that the order of $v-u$ is $2$.
	\end{proof}
	Let $u$ and $v$ be two vertices of $\cay(\Gamma,S)$, and let $w=v-u$. Then by the previous lemma, if $\cay(\Gamma,S)$ exhibits PGST between $u$ and $v$, then the order of $w$ is $2$.  Consequently, $ |\Gamma| $ is even, and for each character $ \chi_a $ of $ \Gamma $, we have $ \chi_a(w) = \pm 1 $. Write $ \chi_a(w) = (-1)^{\alpha_a} $ for some $ \alpha_a \in \{0,1\} $, and define the subsets
	\begin{equation}\label{partition_group}
		\Gamma_{m,w} = \{ a \in \Gamma : \alpha_a=m \} \quad \text{for } m \in \{0,1\}.
	\end{equation}
	Note that $\Gamma=\Gamma_{0,w}\cup\Gamma_{1,w}$.

	\begin{theorem}\label{pgst_cayley}
		Let $u$ and $v$ be two vertices of the Cayley graph $\cay(\Gamma,S)$, and let $w=v-u$. Then $\cay(\Gamma,S)$ exhibits PGST between $u$ and $v$  if and only if the following two conditions are satisfied.
		\begin{enumerate}
			\item[(i)] The order of $w$ is $2$.
			\item[(ii)] For any set $\{\ell_a:a\in\Gamma\}$ of integers, if
			\[\sum_{a\in\Gamma}\ell_a\arccos\mu_a\equiv0\pmod{2\pi}\]
			then
			\[\sum_{a\in\Gamma_{1,w}}\ell_a\equiv0\pmod 2.\]
		\end{enumerate}
	\end{theorem}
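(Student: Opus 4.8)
The plan is to derive this characterization from the general criterion in Theorem~\ref{pgst_graphs} by translating its distinct-eigenvalue bookkeeping into the character bookkeeping natural for $\cay(\Gamma,S)$. For $\mu\in\sigma(P)$ I would write $\Gamma_\mu=\{a\in\Gamma:\mu_a=\mu\}$, so that $E_\mu=\sum_{a\in\Gamma_\mu}\psi_a\psi_a^*$ by \eqref{sd_cayley}. Since $P=\tfrac1sA$ is real symmetric, each $\mu_a$ lies in $[-1,1]$, and because $\arccos$ is injective on $[-1,1]$ one has $\mu_a=\mu_b$ if and only if $\arccos\mu_a=\arccos\mu_b$; this is what lets me pass between sums indexed by characters and sums indexed by distinct eigenvalues. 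A preliminary computation using $(\psi_a)_x=\tfrac1{\sqrt n}\chi_a(x)$ from \eqref{evector_cayley} gives $E_\mu\eu=\tfrac1{\sqrt n}\sum_{a\in\Gamma_\mu}\overline{\chi_a(u)}\,\psi_a$, which is nonzero for every $\mu$ because the $\psi_a$ are orthonormal; hence $\Theta_u=\sigma(P)$. Running the same computation with $\chi_a(v)=\chi_a(u)\chi_a(w)$ and again using orthonormality shows that $E_\mu\eu=\pm E_\mu\ev$ holds for a given $\mu$ if and only if $\chi_a(w)$ takes a single common value in $\{1,-1\}$ over all $a\in\Gamma_\mu$.

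For necessity I would assume PGST occurs between $u$ and $v$. Lemma~\ref{ord_v-u} immediately yields condition~(i), so $w$ has order $2$ and $\chi_a(w)=(-1)^{\alpha_a}\in\{1,-1\}$ for every $a$, making $\Gamma_{0,w},\Gamma_{1,w}$ well defined via \eqref{partition_group}. Theorem~\ref{pgst_graphs}(i) then forces $\chi_a(w)$ to be constant on each $\Gamma_\mu$, so every $\Gamma_\mu$ lies entirely in $\Gamma_{0,w}$ or entirely in $\Gamma_{1,w}$; consequently $\mu\in\Theta_{uv}^-$ exactly when $\Gamma_\mu\subseteq\Gamma_{1,w}$, and $\Gamma_{1,w}=\bigcup_{\mu\in\Theta_{uv}^-}\Gamma_\mu$. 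Given integers $\{\ell_a\}$ with $\sum_a\ell_a\arccos\mu_a\equiv0\pmod{2\pi}$, I set $l_\mu=\sum_{a\in\Gamma_\mu}\ell_a$, so that $\sum_\mu l_\mu\arccos\mu\equiv0\pmod{2\pi}$; Theorem~\ref{pgst_graphs}(ii) then gives $\sum_{\mu\in\Theta_{uv}^-}l_\mu\equiv0\pmod2$, which is precisely $\sum_{a\in\Gamma_{1,w}}\ell_a\equiv0\pmod2$, i.e.\ condition~(ii).

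For sufficiency I would assume (i) and (ii). Condition~(i) makes the $\alpha_a$ and the sets $\Gamma_{m,w}$ well defined. The first task is to recover Theorem~\ref{pgst_graphs}(i): if some $\Gamma_\mu$ met both parts, say $a\in\Gamma_\mu\cap\Gamma_{0,w}$ and $b\in\Gamma_\mu\cap\Gamma_{1,w}$, then taking $\ell_a=1$, $\ell_b=-1$, and all other $\ell_c=0$ gives $\sum_c\ell_c\arccos\mu_c=\arccos\mu_a-\arccos\mu_b=0$ while $\sum_{c\in\Gamma_{1,w}}\ell_c=-1$, contradicting~(ii); hence each $\Gamma_\mu$ is homogeneous and $E_\mu\eu=\pm E_\mu\ev$ for all $\mu$. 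With homogeneity established, the identity $\Gamma_{1,w}=\bigcup_{\mu\in\Theta_{uv}^-}\Gamma_\mu$ holds again, and for integers $\{l_\mu\}_{\mu\in\Theta_u}$ with $\sum_\mu l_\mu\arccos\mu\equiv0\pmod{2\pi}$ I would put $\ell_a=l_\mu$ on one chosen representative of each $\Gamma_\mu$ (and $0$ elsewhere) to transport~(ii) into Theorem~\ref{pgst_graphs}(ii). Both hypotheses of Theorem~\ref{pgst_graphs} then hold, so PGST occurs between $u$ and $v$.

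The step I expect to be the main obstacle is handling the many-to-one collapse $a\mapsto\mu_a$: condition~(i) of Theorem~\ref{pgst_graphs} is phrased over distinct eigenvalues, whereas the partition $\Gamma=\Gamma_{0,w}\cup\Gamma_{1,w}$ is generally finer, and the whole argument rests on the observation that condition~(ii) of the present theorem already forces these two partitions to be compatible (via the $\ell_a=1,\ell_b=-1$ test vector). Establishing this compatibility is what guarantees that $\Gamma_{1,w}$ is a union of full eigenspaces, so that the two number-theoretic conditions match term by term and the Kronecker-type criterion transfers cleanly in both directions.
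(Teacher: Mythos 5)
Your proof is correct, but it takes a genuinely different route from the paper's. The paper does not deduce Theorem~\ref{pgst_cayley} from Theorem~\ref{pgst_graphs}; it reruns the Kronecker argument directly in the Cayley setting: necessity of (i) comes from Lemma~\ref{ord_v-u}, then Lemma~\ref{pgst_permutation} gives a sequence with $T_{\tau_k}(P)\to B$, which applied to each eigenvector $\psi_a$ yields $\lim_k T_{\tau_k}(\mu_a)=\chi_a(w)$ for every $a\in\Gamma$, and Theorem~\ref{kronecker} is then invoked on the character-indexed family $\{\arccos\mu_a : a\in\Gamma\}$ with repetitions allowed; conversely, the paper runs Kronecker backwards to get simultaneous approximation and computes $\lim_k \evt T_{\tau_k}(P)\eu=\frac{1}{n}\sum_{a\in\Gamma}\chi_a(w)^2=1$ from \eqref{sd_cayley}, finishing with Lemma~\ref{dpgst}. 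Because Theorem~\ref{kronecker} places no distinctness hypothesis on the reals $\alpha_1,\dots,\alpha_n$, the many-to-one collapse $a\mapsto\mu_a$ that you single out as the main obstacle never arises in the paper's argument: if two characters shared an eigenvalue but disagreed in sign at $w$, condition (ii) would fail automatically, which is exactly your $\ell_a=1,\ell_b=-1$ test, but the paper never needs to make this explicit. What your reduction buys is a cleaner logical structure: the Cayley theorem becomes a literal specialization of the general criterion, you isolate the useful facts that $\Theta_u=\sigma(P)$ for abelian Cayley graphs and that $E_\mu\eu=\pm E_\mu\ev$ holds precisely when $\chi_a(w)$ is constant on your $\Gamma_\mu$, and you make explicit that conditions (i) and (ii) together already force the strong cospectrality condition of Lemma~\ref{pgst_strongly}. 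The trade-off is the index-translation bookkeeping, which the paper's direct approach avoids entirely.
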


	\begin{proof}
		By Lemma~\ref{ord_v-u}, condition (i) is necessary. By Lemma~\ref{pgst_permutation}, if PGST occurs between $u$ and $v$, then there exists a sequence of positive integers $\tau_k$ such that $\lim_{k\to \infty}T_{\tau_k}(P)=B$, where $B$ is a permutation matrix with $\evt B\eu=1$. Thus for each $a\in\Gamma$, we have $\lim_{k \to \infty}T_{\tau_k}(P)\psi_a=B\psi_a$. Equivalently, $\lim_{k \to \infty}T_{\tau_k}(\mu_a)\psi_a=B\psi_a$ for each  $a\in\Gamma$. Comparing the $u$-th entry of both sides, we have  $\lim_{k \to \infty}T_{\tau_k}(\mu_a)=\chi_a(w)$, that is,  
		\begin{equation}\label{num}
			\lim_{k \to \infty}T_{\tau_k}(\mu_a)=\cos(\alpha_a\pi)\quad\text{for each }a\in\Gamma
		\end{equation}
		Note that  \eqref{num} holds if and only if for any $\epsilon>0$, there exists $\{\tau\}\cup\{k_a:a\in\Gamma\}\subseteq\Zl$ such that 
		$$\left|\tau\frac{\arccos \mu_a}{2\pi}-\frac{\alpha_a}{2}-k_a\right|<\epsilon\quad\text{for each }a\in\Gamma.$$
		Therefore by Theorem~\ref{kronecker}, for any set of integers $\{l_a:a\in\Gamma\}$ satisfying
		$$\sum_{a\in\Gamma}l_a\arccos\mu_a\equiv 0\pmod{2\pi},$$
		we have
		\[\sum_{a\in\Gamma}l_a\alpha_a=\sum_{a\in\Gamma_{1,w}}l_a\equiv 0\pmod 2.\]

		Conversely, let conditions (i) and (ii) hold. Then by Theorem~\ref{kronecker}, for any $\epsilon>0$, there exists $\{\tau\}\cup\{k_a:a\in\Gamma\}\subseteq\Zl$ such that 
		$$\left|\tau\frac{\arccos \mu_a}{2\pi}-\frac{\alpha_a}{2}-k_a\right|<\epsilon\quad\text{for each }a\in\Gamma.$$
		This implies that for each $a\in\Gamma$, there exists a sequence of positive integers $\{\tau_k\}$ such that $$\lim_{k \to \infty}T_{\tau_k}(\mu_a)=\cos(\alpha_a\pi).$$ From \eqref{sd_cayley}, we have
		\[\evt T_{\tau_k}(P)\eu=\frac{1}{n} \sum_{a\in\Gamma} T_{\tau_k}(\mu_a)\chi_a(w).\]
		Therefore it follows that
		\[\lim_{k \to \infty}\evt T_{\tau_k}(P)\eu=1.\]
		Hence by Lemma~\ref{dpgst}, PGST occurs between $u$ and $v$.
	\end{proof}
	\begin{corollary}\label{pgst_circulant}
		The circulant graph $\cay(\Zl_n,S)$ exhibits PGST between the vertices $u$ and  $v$  if and only if the following two conditions are satisfied.
		\begin{enumerate}
			\item[(i)] $v-u=\frac{n}{2}$.
			\item[(ii)] For any set $\{\ell_r:0\leq r\leq n-1\}$ of integers, if
			\[\sum_{r=0}^{n-1}\ell_r\arccos\mu_r\equiv0\pmod{2\pi}\]
			then
			\[\sum_{\substack{r=0\\ r\text{ odd}}}^{n-1}\ell_r\equiv0\pmod 2.\]
		\end{enumerate}
	\end{corollary}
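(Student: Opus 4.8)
The plan is to obtain Corollary~\ref{pgst_circulant} as the specialization of Theorem~\ref{pgst_cayley} to the cyclic group $\Gamma=\Zl_n$. Under this specialization the two abstract ingredients of the theorem---the order-two element $w=v-u$ and the subset $\Gamma_{1,w}$---admit completely explicit descriptions, and once these are computed both conditions of the corollary fall out directly.

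First I would translate condition (i). In $\Zl_n$ an element $w$ satisfies $2w\equiv 0\pmod n$ with $w\neq 0$ exactly when $n$ is even and $w=\tfrac{n}{2}$, and this is then the unique element of order $2$. Hence the requirement ``the order of $w$ is $2$'' in Theorem~\ref{pgst_cayley} is equivalent to $v-u=\tfrac{n}{2}$, which is precisely condition (i) of the corollary and in particular forces $n$ to be even.

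Next I would pin down $\Gamma_{1,w}$. Using the character formula $\chi_a(b)=\omega_n^{ab}$ with $\omega_n=\exp(2\pi i/n)$ and $w=\tfrac{n}{2}$, a direct computation gives $\chi_a(w)=\omega_n^{an/2}=\exp(\pi i a)=(-1)^a$. Comparing this with the defining relation $\chi_a(w)=(-1)^{\alpha_a}$ from \eqref{partition_group} yields $\alpha_a\equiv a\pmod 2$, so that $\Gamma_{1,w}$ is exactly the set of odd residues in $\{0,1,\dots,n-1\}$. Substituting this description into condition (ii) of Theorem~\ref{pgst_cayley}, and retaining the indexing $\mu_r$ of the discriminant eigenvalues from \eqref{evalue_cayley}, reproduces condition (ii) of the corollary verbatim. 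The argument is thus a routine specialization with no real obstacle; the only step deserving a moment's care is the character evaluation $\chi_a(\tfrac{n}{2})=(-1)^a$, since it is this identity that identifies $\Gamma_{1,w}$ with the odd residues and hence produces the parity sum over odd indices appearing in condition (ii).
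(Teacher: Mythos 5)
Your proposal is correct and is exactly the argument the paper intends: the corollary is stated without proof as a direct specialization of Theorem~\ref{pgst_cayley} to $\Gamma=\Zl_n$, where $w=\tfrac{n}{2}$ is the unique element of order $2$ and the evaluation $\chi_a(\tfrac{n}{2})=(-1)^a$ identifies $\Gamma_{1,w}$ with the odd residues. Your write-up simply makes these routine specialization steps explicit, which is all that is needed.
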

	Using Theorem~\ref{pgst_cayley}, we construct infinite families of Cayley graphs that exhibit PGST. From condition (ii) of  Theorem~\ref{pgst_cayley}, it is evident that PGST is closely related to the linear dependence of certain angles over the rationals. Notably, such linear relations are  well-studied for geodetic angles. A real number $\theta$ is said to be a \emph{pure geodetic angle} if any of its six squared trigonometric functions are either rational or infinite. It turns out that all the six squared trigonometric functions of a pure geodetic angle are either rational or infinite.
	\begin{theorem}[{\cite[Chapter~11]{bergen}}]\label{geo3}
		If $q\in\Ql$ and $q\pi$ is a pure geodetic angle, then
		\[\tan (q\pi)\in\left\{0,\pm\sqrt{3},\pm\frac{1}{\sqrt{3}},\pm1\right\}.\]
	\end{theorem}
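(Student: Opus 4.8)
The plan is to reduce the statement to the classical theorem of Niven, which asserts that if an angle is a rational multiple of $\pi$ and its cosine is rational, then that cosine lies in $\{0,\pm\tfrac{1}{2},\pm1\}$, and then to translate this back to the tangent through the double-angle identity. Write $\theta=q\pi$. Since $q\pi$ is a pure geodetic angle, the remark preceding the statement guarantees that $\cos^2\theta$ is rational or infinite; being bounded above by $1$, it is in fact rational. Consequently $\cos(2\theta)=2\cos^2\theta-1\in\Ql$, and $2\theta=2q\pi$ is itself a rational multiple of $\pi$.

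First I would establish the Niven-type conclusion for $\cos(2\theta)$. Set $\zeta=e^{2i\theta}=e^{2\pi i q}$. Because $q\in\Ql$, the number $\zeta$ is a root of unity, hence an algebraic integer, and so is $\zeta^{-1}=\overline{\zeta}$. Therefore $2\cos(2\theta)=\zeta+\zeta^{-1}$ is an algebraic integer. But $\cos(2\theta)$ is rational, so $2\cos(2\theta)$ is a rational algebraic integer, and thus an ordinary integer. Since $|2\cos(2\theta)|\le 2$, this forces $2\cos(2\theta)\in\{-2,-1,0,1,2\}$, equivalently $\cos(2\theta)\in\{0,\pm\tfrac{1}{2},\pm1\}$.

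Finally I would recover $\tan\theta$ from the identity $\cos(2\theta)=\frac{1-\tan^2\theta}{1+\tan^2\theta}$, which rearranges to $\tan^2\theta=\frac{1-\cos(2\theta)}{1+\cos(2\theta)}$. Substituting the five admissible values of $\cos(2\theta)$ gives $\tan^2\theta\in\{0,\tfrac{1}{3},1,3\}$, together with the degenerate case $\cos(2\theta)=-1$, which corresponds to $\theta\equiv\pi/2\pmod{\pi}$ and hence to an infinite tangent. Taking square roots yields precisely $\tan(q\pi)\in\{0,\pm\sqrt{3},\pm\tfrac{1}{\sqrt{3}},\pm1\}$, as claimed.

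The main obstacle is the middle step, namely the passage from \emph{cosine rational at a rational multiple of} $\pi$ to the finite list; this rests entirely on the fact that a rational algebraic integer must be a rational integer, applied to $\zeta+\zeta^{-1}$. Some care is also required at the boundary value $\cos(2\theta)=-1$ so that the infinite tangent is recorded rather than a spurious finite value being produced, which is exactly why the hypothesis on a pure geodetic angle is stated in terms of the squared trigonometric functions being rational \emph{or infinite}.
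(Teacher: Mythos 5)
Your proof is correct, but there is nothing in the paper to compare it against: the paper does not prove this statement, it imports it wholesale from Bergen's book (the citation to Chapter~11 is the entire ``proof''). What you have supplied is a self-contained derivation via the classical algebraic-integer proof of Niven's theorem: $\cos^2\theta$ is rational because it is a bounded squared trigonometric function of a pure geodetic angle, hence $\cos(2\theta)\in\Ql$; then $\zeta=e^{2i\theta}$ is a root of unity, so $2\cos(2\theta)=\zeta+\zeta^{-1}$ is a rational algebraic integer, hence an integer of absolute value at most $2$; finally $\tan^2\theta=\frac{1-\cos(2\theta)}{1+\cos(2\theta)}$ converts the five admissible cosines into the claimed list. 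All steps check out. One observation your case analysis makes visible: the theorem as stated in the paper is slightly imprecise, since for $q=\tfrac{1}{2}$ the hypothesis holds (all six squared trigonometric functions of $\pi/2$ are rational or infinite) yet $\tan(\pi/2)$ is infinite and thus not a member of the displayed set; the statement is correct only if one either appends the infinite value to the list or tacitly restricts to angles with finite tangent. You handle this properly by isolating the degenerate case $\cos(2\theta)=-1$. This does not affect the paper downstream, because in its applications (Lemma~\ref{geo4}, where $\tan(r\pi)=\sqrt{k(k-2)}$, and Lemma~\ref{li_arccos}, where $\tan\theta_r\geq\sqrt{15}$) the tangents in question are explicitly finite real numbers, so only the finite part of the conclusion is ever invoked.
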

	The following lemma is helpful for characterization of PGST on graphs.
	\begin{lema}\label{geo4}
		Let $k$ be a positive integer with $k\geq 4$. Then $\pi$ and $\arccos\frac{1}{1-k}$ are linearly independent over $\Ql$.
	\end{lema}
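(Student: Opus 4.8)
The plan is to reduce the stated linear independence to a statement about rational multiples of $\pi$. The two numbers $\pi$ and $\theta := \arccos\frac{1}{1-k}$ are linearly dependent over $\Ql$ precisely when there exist rationals $a,b$, not both zero, with $a\pi+b\theta=0$; since $\pi\neq 0$, the case $b=0$ forces $a=0$, so we must have $b\neq 0$ and hence $\theta=-(a/b)\pi$. Thus it suffices to prove that $\theta$ is \emph{not} a rational multiple of $\pi$, and I would argue this by contradiction, assuming $\theta=q\pi$ for some $q\in\Ql$.

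First I would record the arithmetic fact that $\theta$ is a pure geodetic angle. Indeed $\cos\theta=\frac{1}{1-k}\in\Ql$, so $\cos^2\theta=\frac{1}{(k-1)^2}\in\Ql$, and having a rational squared trigonometric function is exactly the defining property of a pure geodetic angle. Consequently, under the assumption $\theta=q\pi$, the angle $q\pi$ is a pure geodetic angle, so Theorem~\ref{geo3} applies and yields $\tan(q\pi)\in\{0,\pm\sqrt 3,\pm\frac{1}{\sqrt 3},\pm1\}$, whence $\tan^2\theta\in\{0,\tfrac13,1,3\}$.

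The contradiction then comes from a direct evaluation of $\tan^2\theta$. Using $\cos^2\theta=\frac{1}{(k-1)^2}$ I compute
\[
\tan^2\theta=\frac{1}{\cos^2\theta}-1=(k-1)^2-1=k(k-2).
\]
For $k\geq 4$ this value satisfies $k(k-2)\geq 8$, so $\tan^2\theta\notin\{0,\tfrac13,1,3\}$, contradicting the conclusion of Theorem~\ref{geo3}. Hence $\theta$ is not a rational multiple of $\pi$, and the linear independence of $\pi$ and $\theta$ over $\Ql$ follows. As a sanity check, the excluded case $k=3$ gives $\tan^2\theta=3$ and indeed $\theta=\tfrac{2\pi}{3}$, which explains the hypothesis $k\geq 4$.

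I expect the only genuinely delicate point to be the bookkeeping around the ``infinite'' value in Theorem~\ref{geo3}: I must observe that $\tan\theta$ is finite, since $\cos\theta=\frac{1}{1-k}\neq 0$ for every finite $k$, so that the possibility $\tan\theta=\infty$ never interferes and the theorem is applied to a genuine finite tangent. The remaining steps—reducing independence to the non-rationality of $\theta/\pi$ and the elementary $\tan^2\theta$ computation—are routine.
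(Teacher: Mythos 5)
Your proof is correct and takes essentially the same approach as the paper: assume $\arccos\frac{1}{1-k}=q\pi$ with $q\in\Ql$, compute the tangent (the paper uses $\tan(q\pi)=\sqrt{k(k-2)}$, you use $\tan^2\theta=k(k-2)$, which neatly sidesteps the sign issue), and contradict Theorem~\ref{geo3} since $k\geq 4$. Your explicit reduction of linear independence to the non-rationality of $\theta/\pi$, and your verification that $\theta$ is a pure geodetic angle with finite tangent, are details the paper leaves implicit.
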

	\begin{proof}
		Suppose, if possible, $\arccos\frac{1}{1-k}=r\pi$ for some $r\in\Ql$.  This implies that $\tan r\pi=\sqrt{k(k-2)}$. Therefore by Theorem~\ref{geo3}, $$\sqrt{k(k-2)}\in\left\{0,\pm\sqrt{3},\pm\frac{1}{\sqrt{3}},\pm1\right\}.$$ This is not true because $k\geq 4$. Hence we have the desired result.
	\end{proof}
	Let $\Gamma=\Zl_n\times\Zl_m$ and $S=(\Zl_n\setminus\{0\})\times\Zl_m$.	By \eqref{character}, the character $\chi_{(a, b)}$ of $\Gamma$ is given by
	\[\chi_{(a, b)}(g,h)=\omega_n^{ag}\omega_m^{bh}\quad\text{for } (g,h)\in\Gamma.\] 
	For $(a,b)\in\Gamma$, the eigenvalue $\mu_{(a,b)}$ of $P(\cay(\Gamma,S))$ is given by 
	\begin{align}
		\mu_{(a,b)}&=\frac{1}{m(n-1)}\sum_{g\in\Zl_n\setminus\{0\}}\omega_n^{ag}\sum_{h\in\Zl_m}\omega_m^{bh} \nonumber\\
		&= \begin{cases}
			1&\text{ if } a=0,b=0\\
			\frac{1}{1-n}&\text{ if } a\neq 0, b=0\\
			0&\text{ if } b\neq0.
		\end{cases} \label{ev_sp_cay}
	\end{align}
	By~\eqref{evector_cayley}, the unit eigenvector of $P(\cay(\Gamma, S))$ corresponding $\mu_{(a,b)}$ is given by
	\[\psi_{(a,b)}=\frac{1}{\sqrt{m(n-1)}}\left(\omega_n^{ax}\omega_m^{bh}\right)_{(g,h)\in\Gamma}^t.\]
	
	\begin{lema}\label{inf_cayley}
		Let $S=(\Zl_n\setminus\{0\})\times\Zl_m$ with $n\geq 2$ and $m\geq2$. Then the Cayley graph $\cay(\Zl_n\times\Zl_m, S)$ exhibits PGST if and only if $m=2$.
	\end{lema}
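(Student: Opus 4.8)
The plan is to verify the two conditions of Theorem~\ref{pgst_cayley} using the explicit eigenvalue computation in \eqref{ev_sp_cay}. Recall that $P$ has only three distinct eigenvalues: $1$ (for $a=(0,0)$), $\frac{1}{1-n}$ (for $a_1\neq 0,\ a_2=0$), and $0$ (for all $a$ with $a_2\neq 0$), with corresponding arc-cosines $0$, $\theta:=\arccos\frac{1}{1-n}$, and $\frac{\pi}{2}$. Throughout I will use Lemma~\ref{ord_v-u}, which forces any candidate transfer vector $w=v-u$ to have order $2$.

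For the sufficiency direction ($m=2$), I would take $w=(0,1)$, which has order $2$, so condition~(i) of Theorem~\ref{pgst_cayley} holds. Since $\chi_a(w)=(-1)^{a_2}$, the set $\Gamma_{1,w}$ consists exactly of the $a$ with $a_2=1$, and all such $a$ lie in the eigenvalue-$0$ block. Writing $L=\sum_{a_1\neq 0}\ell_{(a_1,0)}$ and $M=\sum_{a_1}\ell_{(a_1,1)}$, any integral relation $\sum_{a}\ell_a\arccos\mu_a\equiv 0\pmod{2\pi}$ collapses to $L\theta+\frac{M}{2}\pi\equiv 0\pmod{2\pi}$, and condition~(ii) amounts to showing $M\equiv 0\pmod 2$. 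I would then split on $n$: for $n\geq 4$, Lemma~\ref{geo4} gives that $\theta$ and $\pi$ are $\Ql$-linearly independent, forcing $L=0$ and $M=4k$; for $n=2$ (where $\theta=\pi$) and $n=3$ (where $\theta=\frac{2\pi}{3}$) the relation becomes $2L+M\equiv 0\pmod 4$ and $4L+3M\equiv 0\pmod{12}$ respectively, each yielding $M$ even upon reduction modulo $2$. Hence condition~(ii) holds and PGST occurs.

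For the converse ($m\geq 3\Rightarrow$ no PGST), by Lemma~\ref{ord_v-u} it suffices to rule out every $w$ of order $2$. The key observation is that the eigenvalue-$0$ block $B=\{a:a_2\neq 0\}$ has size $n(m-1)$, which strictly exceeds $\frac{nm}{2}$ precisely when $m\geq 3$. Since $a\mapsto\chi_a(w)=\chi_w(a)$ is a nontrivial $\{\pm 1\}$-valued character (nontrivial as $w\neq\mathbf 0$, and $\{\pm1\}$-valued as $2w=\mathbf 0$) whose two fibers each have size $\frac{nm}{2}$, neither fiber can contain all of $B$. Thus there exist $a,a'\in B$ with $\chi_a(w)=-1$ and $\chi_{a'}(w)=+1$, i.e.\ $a\in\Gamma_{1,w}$ and $a'\in\Gamma_{0,w}$. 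Setting $\ell_a=1$, $\ell_{a'}=-1$, and all other coefficients $0$ gives $\sum_b\ell_b\arccos\mu_b=\frac{\pi}{2}-\frac{\pi}{2}=0\equiv 0\pmod{2\pi}$, while $\sum_{b\in\Gamma_{1,w}}\ell_b=1$ is odd. This violates condition~(ii) of Theorem~\ref{pgst_cayley}, so no PGST occurs for this $w$, and hence none at all.

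The main obstacle I anticipate is the sufficiency direction for small $n$: because $\theta=\arccos\frac{1}{1-n}$ is a rational multiple of $\pi$ exactly when $n\in\{2,3\}$ (by Theorem~\ref{geo3}), Lemma~\ref{geo4} does not apply there, and one must instead check by hand that the resulting congruences still force $M$ even. Everything else—the eigenvalue bookkeeping and the counting argument in the converse—is routine once the three-eigenvalue structure and the order-$2$ constraint on $w$ are in place.
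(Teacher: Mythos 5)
Your proof is correct, and while your sufficiency argument (for $m=2$) follows essentially the same route as the paper—fixing $w=(0,1)$, collapsing the relation to $L\arccos\frac{1}{1-n}+\frac{M}{2}\pi\equiv 0\pmod{2\pi}$, and splitting into $n\geq 4$ (via Lemma~\ref{geo4}) versus the small cases $n\in\{2,3\}$ (the paper disposes of $n=2$ by identifying the graph as $C_4$ and citing Lemma~\ref{pgst_cycle}, where you instead run the congruence directly; both work)—your necessity argument is genuinely different. The paper proves necessity by explicitly computing the projections $E_0\mathbf{e}_{(0,0)}$, $E_0\mathbf{e}_{(0,\frac{m}{2})}$, $E_0\mathbf{e}_{(\frac{n}{2},0)}$, $E_0\mathbf{e}_{(\frac{n}{2},\frac{m}{2})}$ onto the $0$-eigenspace, enumerating the possible order-$2$ elements, and invoking Lemma~\ref{pgst_strongly} to conclude $E_0\eu=\pm E_0\ev$ forces $m=2$. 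You instead exploit the arithmetic condition (ii) of Theorem~\ref{pgst_cayley} with a counting argument: the $0$-eigenvalue block $B=\{a:a_2\neq 0\}$ has size $n(m-1)>\frac{nm}{2}$ exactly when $m\geq 3$, while each fiber of the nontrivial $\{\pm1\}$-valued character $a\mapsto\chi_a(w)$ has size $\frac{nm}{2}$, so $B$ meets both $\Gamma_{0,w}$ and $\Gamma_{1,w}$; choosing coefficients $1$ and $-1$ on such a pair produces a relation $\frac{\pi}{2}-\frac{\pi}{2}\equiv 0\pmod{2\pi}$ with odd weight on $\Gamma_{1,w}$, killing PGST for every order-$2$ element $w$ at once. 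This is the natural abelian generalization of the trick in Lemma~\ref{pgst2_circ}, and it buys uniformity: no enumeration of order-$2$ elements and no eigenprojection computations are needed, and the argument shows more generally that any abelian Cayley graph whose $0$-eigenspace accounts for more than half of $\Gamma$ admits no PGST. What the paper's approach buys in exchange is slightly more information along the way—it isolates $(0,0)$ and $(0,\frac{m}{2})$ as the only candidate pair before any number theory is used, and its necessity direction rests only on the elementary Lemma~\ref{pgst_strongly} rather than on the full Kronecker-based Theorem~\ref{pgst_cayley}.
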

	\begin{proof}
		Let $\Gamma=\Zl_n\times\Zl_m$. By condition (i) of Theorem~\ref{pgst_cayley}, it is enough to check the existence of PGST between $(0,0)$ and any vertex $(a,b)$ of order  $2$. Note that the possible elements of order $2$ in $\Gamma$ are $(0,\frac{m}{2})$, $(\frac{n}{2}, 0)$ and $(\frac{n}{2},\frac{m}{2})$. Writing $\mathcal{I}_0=\Zl_n\times(\Zl_m\setminus\{0\})$,  we find that $$E_0=\sum_{(a,b)\in\mathcal{I}_0}\psi_{(a,b)}\psi_{(a,b)}^*.$$
		Using this, we compute 
		\begin{align*}
			E_0\mathbf{e}_{(0,0)}&=\left(\sum_{(a,b)\in\mathcal{I}_0}\psi_{(a,b)}\psi_{(a,b)}^*\right)\mathbf{e}_{(0,0)}\\
			&=\sum_{(a,b)\in\mathcal{I}_0}\psi_{(a,b)}\left(\psi_{(a,b)}^*\mathbf{e}_{(0,0)}\right)\\
			&=\frac{1}{\sqrt{m(n-1)}}\sum_{(a,b)\in\mathcal{I}_0}\psi_{(a,b)}.
		\end{align*}
		Similarly, we find that
		\begin{align*}
			E_0\mathbf{e}_{(0,\frac{m}{2})}&=\frac{1}{\sqrt{m(n-1)}}\sum_{(a,b)\in\mathcal{I}_0}(-1)^b\psi_{(a,b)},\\
			E_0\mathbf{e}_{(\frac{n}{2},0)}&=\frac{1}{\sqrt{m(n-1)}}\sum_{(a,b)\in\mathcal{I}_0}(-1)^a\psi_{(a,b)},\quad\text{and}\\
			E_0\mathbf{e}_{(\frac{n}{2},\frac{m}{2})}&=\frac{1}{\sqrt{m(n-1)}}\sum_{(a,b)\in\mathcal{I}_0}(-1)^{a+b}\psi_{(a,b)}.
		\end{align*}
		Note that the set $\{\psi_{(a,b)}:(a,b)\in\Gamma\}$ is linearly independent. Thus we find that $E_0\mathbf{e}_{(0,0)}\neq \pm E_0\mathbf{e}_{(a,b)}$ for $(a,b)\in\{(\frac{n}{2},0),(\frac{n}{2},\frac{m}{2})\}$. Moreover, $E_0\mathbf{e}_{(0,0)}= \pm E_0\mathbf{e}_{(0,\frac{m}{2})}$ if and only if $m=2$. Hence by Lemma~\ref{pgst_strongly}, if $\cay(\Gamma,S)$ exhibits PGST then $m=2$, and it occurs between $(0,0)$ and $(0,1)$. 
		
		Conversely, assume that $m=2$. We show that PGST occurs between the vertices $(0,0)$ and $(0,1)$. Writing  $w=(0,1)$, we find that $\Gamma_{1,w}=\Zl_n\times\{1\}$. Suppose, for any set of integers $\{\ell_{(a,b)}:(a,b)\in\Gamma\}$,
		\[\sum_{(a,b)\in\Gamma}\ell_{(a,b)}\arccos\mu_{(a,b)}\equiv0\pmod{2\pi}.\]
		Let $T=(\Zl_n\setminus\{0\})\times\{0\}$. Then by \eqref{ev_sp_cay}, we have
		\begin{equation*}
			\ell_{(0,0)}\arccos(1)+\arccos\frac{1}{1-n}\sum_{(a,b)\in T}\ell_{(a,b)}+\arccos(0)\sum_{(a,b)\in \Gamma_{1,w}} \ell_{(a,b)}\equiv0\pmod{2\pi}.
		\end{equation*}
		This implies that
		\begin{equation}\label{pgstfind}
			\arccos\frac{1}{1-n}\sum_{(a,b)\in T}\ell_{(a,b)}+\pi\left(\frac{1}{2} \sum_{(a,b)\in \Gamma_{1,w}} \ell_{(a,b)}-2k\right)=0\quad\text{for some integer } k.
		\end{equation}
		If $n=2$ then the graph is the cycle $C_4$, which exhibits PGST by Lemma~\ref{pgst_cycle}.
		If $n=3$, then from \eqref{pgstfind} we obtain
		\[ \sum_{(a,b)\in \Gamma_{1,w}} \ell_{(a,b)}=2\left(6k-2\sum_{(a,b)\in T}\ell_{(a,b)}-\sum_{(a,b)\in \Gamma_{1,w}} \ell_{(a,b)}\right)\equiv0\pmod2.\]
		If $n\geq4$, then using \eqref{pgstfind} together with Lemma~\ref{geo4}, we obtain
		\[\sum_{(a,b)\in \Gamma_{1,w}} \ell_{(a,b)}=4k\equiv0\pmod2.\]
		Hence by Theorem~\ref{pgst_cayley}, $\cay(\Gamma,S)$ exhibits PGST between $(0,0)$ and $(0,1)$.
	\end{proof}
	Lemma~\ref{inf_cayley} gives an infinite family of noncirculant abelian Cayley graphs exhibiting PGST.
	\section{PGST on unitary Cayley graphs}
	This section provides a complete characterization of PGST on unitary Cayley graphs. The \emph{unitary Cayley graph}, denoted $\Gn$, is the Cayley graph $\cay(\Zl_n,\Zl_n^\times)$, where $\Zl_n^\times$ is the multiplicative group of integers modulo $n$. For further details on the structure and properties of $\Gn$, we refer the reader to \cite{ucg}.
	
	In 1918, Srinivasa Ramanujan introduced an arithmetic function in his seminal paper \cite{ramanujan}. This function, now known as Ramanujan's sum and denoted by $c(j,n)$, is defined for integers $j$ and $n$ with $n\geq 1$ as
	\begin{equation}\label{ramanujan1}
		c(j,n)=\sum_{k\in \Zl_n^\times} \omega_n^{jk}=\mathop{\sum_{k\in \Zl_n^\times }}_{k<\frac{n}{2}} 2 \cos\left( \frac{2\pi jk }{n} \right).
	\end{equation}
	Therefore the eigenvalues $\ld_j$ of the adjacency matrix of $\Gn$ is given by $\lambda_j=c(j,n)~~\text{for}~~ 0\leq j\leq n-1.$
	The Ramanujan's sum $c(j,n)$ can also be expressed in terms of arithmetic functions (see \cite{ramanujan}). For integers $n$, $j$ with $n\geq 1$, 
	\begin{equation}\label{ramanujan2}
		c(j,n)=\sum_{r\divides \gcd(j,n)} \mu\left(\frac{n}{r}\right)= \mu (t_{n,j})\frac{\varphi(n)}{\varphi(t_{n,j})}, ~~\text{ where}~~ t_{n,j}=\frac{n}{\gcd(n,j)},
	\end{equation}
	$\mu $ is the M\"{o}bius function and $\varphi$ is the Euler totient function. We now present a simple spectral condition that rules out the possibility of PGST in circulant graphs. Recall from \eqref{evalue_cayley} that $\mu_a$ is an eigenvalue of the discriminant $P(\cay(\Zl_n,S))$ for each $a\in\Zl_n$.
	\begin{lema}\label{pgst2_circ}
		Let  $a$  be odd and $b$ be even with $0\leq a,b\leq n-1$. If $\mu_a=0=\mu_b$ in the circulant graph $\cay(\Zl_n,S)$, then the graph does not exhibit PGST.
	\end{lema}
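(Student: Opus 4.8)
The plan is to invoke the number-theoretic characterization of PGST on circulant graphs, namely Corollary~\ref{pgst_circulant}, and to exhibit a single integer relation that violates its condition~(ii). Recall that if $\cay(\Zl_n,S)$ exhibits PGST, then by condition~(i) it must occur between some pair of vertices $u,v$ with $v-u=\frac{n}{2}$; in particular the relevant parity set appearing in condition~(ii) is exactly the set of odd indices $r$. Since condition~(ii) depends only on the eigenvalues $\mu_r$ and this fixed parity set, and not on the particular pair $u,v$, it suffices to show that condition~(ii) fails.

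The key observation is that the hypotheses hand us two equal eigenvalues sitting at indices of opposite parity: $\mu_a=0$ forces $\arccos\mu_a=\frac{\pi}{2}$, and likewise $\arccos\mu_b=\frac{\pi}{2}$. I would then choose the integers $\ell_a=1$, $\ell_b=-1$, and $\ell_r=0$ for every other $r$. With this choice the weighted sum of angles cancels:
\[
\sum_{r=0}^{n-1}\ell_r\arccos\mu_r=\arccos\mu_a-\arccos\mu_b=\frac{\pi}{2}-\frac{\pi}{2}=0\equiv0\pmod{2\pi},
\]
so the hypothesis of condition~(ii) is satisfied.

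On the other hand, because $a$ is odd while $b$ is even, the only odd index carrying a nonzero coefficient is $a$, so
\[
\sum_{\substack{r=0\\ r\text{ odd}}}^{n-1}\ell_r=\ell_a=1\equiv1\pmod 2.
\]
This contradicts the conclusion required by condition~(ii) of Corollary~\ref{pgst_circulant}. Hence condition~(ii) cannot hold for the pair with difference $\frac{n}{2}$, while every other pair fails condition~(i); consequently no pair of vertices can exhibit PGST, i.e.\ $\cay(\Zl_n,S)$ does not exhibit PGST.

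The argument is essentially immediate once it is set up, so there is no genuine analytic obstacle here. The one point requiring care is simply the recognition that two coincident zero eigenvalues at indices of opposite parity are exactly what is needed to produce a null angle-combination that nevertheless carries an odd count on the odd-index side; all of the approximation-theoretic content has already been absorbed into the Kronecker-type criterion of Corollary~\ref{pgst_circulant}, so no further estimation is needed.
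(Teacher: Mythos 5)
Your proposal is correct and follows essentially the same route as the paper: both invoke Corollary~\ref{pgst_circulant} and exhibit a single integer relation $\sum_r \ell_r \arccos\mu_r \equiv 0 \pmod{2\pi}$ whose odd-index coefficient sum is odd, the only (immaterial) difference being that you take $\ell_a=1,\ \ell_b=-1$ so the angle sum is $0$, while the paper takes $\ell_a=1,\ \ell_b=3$ so the sum is $2\pi$. Your explicit remark that condition~(ii) is independent of the particular pair $(u,v)$, so its failure kills PGST for every candidate pair, is a point the paper leaves implicit.
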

	\begin{proof}
		Let $\ell_a=1$, $\ell_b=3$, and $\ell_r=0$ for $r\in\Zl_n\setminus\{a,b\}$. Then it is easy to verify that 
		\[\sum_{r=0}^{n-1}\ell_r\arccos\mu_r\equiv0\pmod{2\pi}.\]
		However,
		\[\sum_{\substack{r=0\\ r\text{ odd}}}^{n-1}\ell_r=1 \not\equiv0\pmod 2.\]
		Therefore by Corollary~\ref{pgst_circulant}, $\cay(\Zl_n,S)$ does not exhibit PGST.
	\end{proof}
	Using the preceding result, we show that if $\Gn$ exhibits PGST, then $n$ is square-free of odd primes and  $2$ is a factor of $n$ with exponent at most $2$.
	\begin{lema}\label{pgst_uc1}
		The unitary Cayley graph $\Gn$ does not exhibit PGST if one of the following holds.
		\begin{enumerate}
			\item[(i)] $p^2\mid n$ for some odd prime $p$.
			\item[(ii)] $n=2^sm$, where $s\geq3$ and  $m$ is odd. 
		\end{enumerate}
	\end{lema}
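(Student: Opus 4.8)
The plan is to reduce both cases to the spectral obstruction in Lemma~\ref{pgst2_circ} by producing, in each case, an odd index $a$ and an even index $b$ with $\mu_a=\mu_b=0$. The first step is to convert the vanishing of an eigenvalue into a divisibility condition. Since $\Gn=\cay(\Zl_n,\Zl_n^\times)$ has $|S|=\varphi(n)$, the eigenvalue in \eqref{evalue_cayley} is $\mu_a=c(a,n)/\varphi(n)$, and \eqref{ramanujan2} rewrites this as $\mu_a=\mu(t_{n,a})/\varphi(t_{n,a})$ with $t_{n,a}=n/\gcd(n,a)$. As $\varphi(t_{n,a})>0$ always and $\mu(t_{n,a})\neq 0$ precisely when $t_{n,a}$ is square-free, this yields the clean criterion
\[
\mu_a=0 \iff t_{n,a}\text{ is not square-free.}
\]

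With this criterion in hand, the plan is to verify that $a=1$ and $b=2$ work in both cases; note that $n\geq 9$ in case (i) and $n\geq 8$ in case (ii), so these indices are distinct and lie in $\{0,\dots,n-1\}$. For $a=1$ we have $t_{n,1}=n$, which is non-square-free because $p^2\mid n$ in case (i) and $4\mid n$ in case (ii); hence $\mu_1=0$ with $a=1$ odd. For $b=2$ we have $t_{n,2}=n/\gcd(n,2)$, so the remaining task is to confirm that a square factor survives this division. In case (i), if $n$ is odd then $t_{n,2}=n$ is still divisible by $p^2$, while if $n$ is even then $t_{n,2}=n/2$ and, since $p$ is odd, dividing by $2$ leaves $p^2\mid t_{n,2}$ untouched. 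In case (ii), $t_{n,2}=n/2$ has $2$-adic valuation $s-1\geq 2$, so $4\mid t_{n,2}$. In every subcase $t_{n,2}$ is non-square-free, giving $\mu_2=0$ with $b=2$ even.

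Having exhibited an odd $a=1$ and an even $b=2$ with $\mu_1=\mu_2=0$, Lemma~\ref{pgst2_circ} immediately forces that $\Gn$ does not exhibit PGST, completing both cases. The only genuine subtlety — and hence the main point to be careful about — is ensuring the square factor is not destroyed when passing from $a=1$ to $b=2$, i.e.\ under division by $\gcd(n,2)$; this is exactly where the oddness of $p$ in case (i) and the strict bound $s\geq 3$ (rather than merely $s\geq 2$) in case (ii) are used, so I would flag these two hypotheses as the load-bearing ingredients.
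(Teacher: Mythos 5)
Your proof is correct and follows essentially the same route as the paper: both arguments reduce to Lemma~\ref{pgst2_circ} by exhibiting an odd index and an even index whose discriminant eigenvalues vanish, via the criterion from \eqref{ramanujan2} that $\mu_a=0$ if and only if $n/\gcd(n,a)$ fails to be square-free. The only (harmless) differences are your choice of the even index ($b=2$ in all cases, versus $b=k$ where $n=p^2k$, or $b=2m$, in the paper) and that you treat the odd-$n$ subcase of (i) by the same eigenvalue argument, whereas the paper dispatches it by noting that condition (i) of Corollary~\ref{pgst_circulant} cannot hold when $n$ is odd.
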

	\begin{proof}
		In both cases, we show that there exists an odd integer $a$ and an even integer $b$ with $0\leq a,b\leq n-1$ such that $\mu_a=0=\mu_b$. 
		
		\noindent\textbf{Case (i).} Suppose $p^2\mid n$ for some odd prime $p$. If $n$ is odd, then by Corollary~\ref{pgst_circulant} the graph does not exhibit PGST. Now, assume that $n = p^2 k$ for some even integer $k$. Using~\eqref{ramanujan2}, we find that $\mu_1=0=\mu_k$.

		\noindent\textbf{Case (ii)}. Suppose $n=2^sm$, where $s\geq3$ and  $m$ is odd. In this case, $\mu_1=0=\mu_{2m}$.
		
		By Lemma~\ref{pgst2_circ}, we find that PGST does not occur in $\Gn$.
	\end{proof}
	Thus, if $\Gn$ exhibits PGST,  then $n$ must be of the form  $n=2m$ or $n=4m$ for some square-free odd positive integer $m$. For each positive divisor $ r $ of $ n $, define the index set
	\[
	I_r = \{ j:0 \leq j < n,~ \gcd(j, n) = r \}.
	\]
	
	First consider $n=2m$, where $m=p_1\cdots p_k$ for distinct odd primes $p_1,\hdots, p_k$. 
	For any $S\subseteq\{1,\hdots,k\}$, define $P_S:=\prod_{i \in S} p_i$ with the convention that $P_\emptyset=1$.
	Then the positive divisors of $n$ are precisely the numbers of the form $P_S$ or $2P_S$ for some $S\subseteq\{1,\hdots,k\}$.

	By \eqref{ramanujan1} and \eqref{ramanujan2}, the discriminant eigenvalues of $G_{\Zl_n}$ are given by
	\[\mu_j=\frac{\mu(\frac{n}{r})}{\varphi(\frac{n}{r})},\quad\text{where }r=\gcd(j,n). \]
	The eigenvalues $\mu_j$ can also be expressed as follows:
	\[
	\mu_j=
	\begin{cases}
		\frac{(-1)^{|\overline{S}|+1}}{\varphi(P_{\overline{S}})}&\text{ if }\gcd(j,n)=P_S \\
		\frac{(-1)^{|\overline{S}|}}{\varphi(P_{\overline{S}})}& \text{ if }\gcd(j,n)=2P_S
	\end{cases}
	\]
	for some $S\subseteq\{1,\hdots,k\}$ and $\overline{S}:=\{1,\hdots,k\}\setminus S$.
	
	We establish the linear independence of certain arccosine values to characterize PGST on unitary Cayley graphs. To this end, we first recall a key result concerning rational linear combinations of geodetic angles.
	\begin{theorem}[{\cite[Theorem~3]{conway_angles}}]\label{geo2}
		If the value of a rational linear combination of some  pure geodetic angles is a rational multiple of $\pi$, then so is the value of its restriction to those angles whose tangents are rational multiple of any given square root. 
	\end{theorem}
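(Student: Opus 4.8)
The plan is to convert the additive identity among angles into a multiplicative identity among algebraic numbers of modulus one, and then to separate that identity according to the imaginary quadratic field carrying each factor. First I would record the arithmetic content of a pure geodetic angle $\theta$: since all squared trigonometric functions are rational, $\tan^2\theta\in\Ql$, so $\tan\theta=c\sqrt d$ for some $c\in\Ql$ and a squarefree positive integer $d$ (the degenerate right-angle case aside). A direct computation then gives
\[
\cos^2\theta=\frac{1}{1+c^2d}\in\Ql,\qquad \cos2\theta=\frac{1-c^2d}{1+c^2d}\in\Ql,\qquad \sin2\theta=\frac{2c\sqrt d}{1+c^2d},
\]
and therefore $e^{2i\theta}=\cos2\theta+i\sin2\theta\in\Ql(\sqrt{-d})$ with $|e^{2i\theta}|=1$. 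Thus to each geodetic angle I attach its \emph{type} $d$ together with a modulus-one element $\alpha_\theta=e^{2i\theta}$ of the imaginary quadratic field $\Ql(\sqrt{-d})$; two angles have the same type precisely when their tangents are rational multiples of the same $\sqrt d$.

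Next I would transform the hypothesis. Suppose $\sum_j q_j\theta_j=q_0\pi$ with $q_j\in\Ql$. Writing $\phi_j=2\theta_j$ and clearing denominators with a common multiplier $M$, this becomes $\sum_j m_j\phi_j\in 2\pi\Zl$ with $m_j=Mq_j\in\Zl$, and exponentiating produces the purely multiplicative relation
\[
\prod_j \alpha_{\theta_j}^{\,m_j}=1
\]
inside the compositum $K=\Ql(\sqrt{-d_1},\dots,\sqrt{-d_r})$. Grouping the factors by type, the conclusion I must reach is exactly that each homogeneous sub-product $\prod_{d_j=d}\alpha_{\theta_j}^{\,m_j}$ is a root of unity: by the same exponential correspondence (run backwards, dividing by $2M$) this is equivalent to $\sum_{d_j=d}q_j\theta_j\in\pi\Ql$, which is the asserted restricted statement.

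The heart of the argument is the structure of the modulus-one elements of an imaginary quadratic field. For $\alpha\in\Ql(\sqrt{-d})^\times$ one has $|\alpha|=1$ iff $\alpha\overline\alpha=1$, so by Hilbert~90 such $\alpha$ are exactly the ratios $\beta/\overline\beta$; factoring into prime ideals, every such element is, up to a root of unity, a finite product of ratios $\pi/\overline\pi$ over split rational primes, and modulo roots of unity these ratios are multiplicatively free. The decisive point is that, inside $K$, the generators coming from different types $d$ are multiplicatively independent modulo roots of unity, since a rational prime contributes genuinely distinct valuation data in $\Ql(\sqrt{-d})$ for different $d$; hence no cross-type cancellation can occur. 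Reading the relation $\prod_j\alpha_{\theta_j}^{m_j}=1$ through the valuations of $K$ then splits it into one relation per type, each forcing the corresponding homogeneous sub-product to be a root of unity.

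I expect the main obstacle to be precisely this separation step: rigorously proving multiplicative independence of the ratios $\pi/\overline\pi$ across the fields $\Ql(\sqrt{-d_j})$ modulo roots of unity requires a careful prime-by-prime analysis in the compositum $K$ and a uniform handling of ramified primes and class-group contributions. This rigidity for unimodular (equivalently, vanishing) products of algebraic numbers is exactly the technical core classified in \cite{conway_angles}; the reductions in the first two paragraphs are elementary, but it is this last independence statement that makes the theorem work and is the reason the result is invoked rather than reproved here.
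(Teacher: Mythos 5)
The first thing to note is that the paper does not prove this statement at all: it is imported verbatim as Theorem~3 of \cite{conway_angles}, so there is no in-paper argument to compare against, and the only fair comparison is with the argument in that reference. Your reductions are faithful to its strategy: attaching to each pure geodetic angle $\theta$ with $\tan\theta=c\sqrt d$ the unimodular element $e^{2i\theta}\in\Ql(\sqrt{-d})$, clearing denominators, exponentiating the hypothesis into a multiplicative relation $\prod_j\alpha_j^{m_j}=1$ in the compositum $K$, and observing that the desired conclusion is exactly that each homogeneous sub-product $P_d=\prod_{d_j=d}\alpha_j^{m_j}$ is a root of unity. All of that is correct and complete.

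The problem is that your third paragraph asserts, rather than proves, the one statement that carries the entire theorem: the separation of the relation by type. Two concrete remarks. First, the machinery you anticipate needing (Hilbert~90, factorization into split-prime ratios $\pi/\overline{\pi}$, class-group and ramification bookkeeping) is both heavier than necessary and adjacent to a genuine trap: the ``obvious'' automorphism of $K$ negating $\sqrt{-d}$ while fixing every other $\sqrt{-d'}$ need not exist, because the classes of the $-d_j$ in $\Ql^\times/(\Ql^\times)^2$ can be dependent --- e.g.\ $(-2)(-3)(-5)(-30)$ is a square, so $\sqrt{-30}\in\Ql(\sqrt{-2},\sqrt{-3},\sqrt{-5})$ --- so a one-shot conjugation cannot separate types. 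Second, the step closes in a few lines of character theory, with no valuations at all: each type $d$ occurring gives a character $\varepsilon_d$ of the abelian group $G=\mathrm{Gal}(K/\Ql)$ via $\sigma(\sqrt{-d})=\varepsilon_d(\sigma)\sqrt{-d}$, distinct squarefree $d$ give distinct characters, and since every unimodular element of $\Ql(\sqrt{-d})$ is inverted by conjugation we have $\sigma(P_d)=P_d^{\varepsilon_d(\sigma)}$. Applying each $\sigma\in G$ to the relation $\prod_d P_d=1$, raising the resulting identity to the power $\varepsilon_{d_0}(\sigma)$ for a fixed type $d_0$, and multiplying over all $\sigma\in G$, orthogonality of characters yields $P_{d_0}^{|G|}=1$; hence every $P_d$ is a root of unity, which is what you needed. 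So the gap you flagged is real as your text stands --- the ``decisive point'' is exactly where the proof lives --- but it is closable by this elementary Galois averaging rather than by the prime-by-prime analysis you defer to \cite{conway_angles}.
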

	\begin{lema}\label{li_arccos}
		Let $m$ be a product of distinct odd primes. Then the set 
		\[\{\pi\}\cup \left\{\arccos \frac{1}{\varphi(r)}: \text{$r\mid m$ with $r>3$}\right\}\]
		is linearly independent over $\Ql$.
	\end{lema}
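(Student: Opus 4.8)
The plan is to treat each $\theta_r:=\arccos\frac1{\varphi(r)}$ (for $r\mid m$, $r>3$) as a pure geodetic angle and to split a hypothetical rational relation by its square‑root content via Theorem~\ref{geo2}. Since $\cos^2\theta_r=1/\varphi(r)^2\in\Ql$, each $\theta_r$ is a pure geodetic angle with $\tan\theta_r=\sqrt{\varphi(r)^2-1}$. Suppose, for contradiction, that some nontrivial relation $c_0\pi+\sum_{r\mid m,\,r>3}c_r\theta_r=0$ holds with rational coefficients; clearing denominators we may take all $c_i\in\Zl$. Rewriting this as $\sum_r c_r\theta_r=-c_0\pi$, the left-hand side is a rational combination of pure geodetic angles whose value is a rational multiple of $\pi$, which is precisely the hypothesis of the Conway--Jones reduction.

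For each $r>3$ write $\varphi(r)^2-1=a_r^2 d_r$ with $d_r=\sq(\varphi(r)^2-1)$ square-free, so that $\tan\theta_r=a_r\sqrt{d_r}$ is a rational multiple of $\sqrt{d_r}$. Applying Theorem~\ref{geo2} with the square root $\sqrt d$, for every square-free $d$ the restricted sum $\sum_{r:\,d_r=d}c_r\theta_r$ is again a rational multiple of $\pi$. Thus it suffices to analyse one square-root class at a time. Because $\cos\theta_r=1/\varphi(r)$ recovers $\varphi(r)$, two divisors contribute the \emph{same} element of the set exactly when $\varphi(r)=\varphi(r')$; hence the set is genuinely indexed by the distinct (even, $\ge 4$) totient values $k=\varphi(r)$.

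The heart of the argument is to show that each square-root class is a singleton, i.e. that distinct totient values $k\neq k'$ coming from divisors of $m$ yield distinct square-free parts, equivalently that $(k^2-1)(k'^2-1)=(k-1)(k+1)(k'-1)(k'+1)$ is never a perfect square. This is the main obstacle. I would exploit that $k$ is even, so $k-1$ and $k+1$ are coprime odd numbers, $k^2-1$ is odd, and $\sq(k^2-1)=\sq(k-1)\,\sq(k+1)$; combined with the constrained $2$-adic valuation and multiplicative shape of totient values $k=\prod_{p\mid r}(p-1)$, this is meant to rule out the Pell-type coincidences that would otherwise let two values share a square-free part. Equivalently, in the imaginary quadratic field $\Ql(\sqrt{-d})$ the unit-modulus number $e^{\iu\theta_r}=\frac{1+a_r\sqrt{-d}}{\varphi(r)}$ has norm $1$ and factors through the split primes dividing $\varphi(r)$, and the distinctness claim is the statement that these elements are multiplicatively independent modulo roots of unity. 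Proving this cleanly, rather than verifying it in examples, is the delicate point.

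Once the classes are singletons, a class relation reads $c_r\theta_r=q\pi$ for some $q\in\Ql$. For $r>3$ we have $\varphi(r)\ge 4$, so $\tan\theta_r=\sqrt{\varphi(r)^2-1}\ge\sqrt{15}\notin\{0,\pm1,\pm\sqrt3,\pm\tfrac1{\sqrt3}\}$; by Theorem~\ref{geo3} (as in Lemma~\ref{geo4}) $\theta_r$ is not a rational multiple of $\pi$, which forces $c_r=0$. As this holds for every surviving $r$, the original relation collapses to $c_0\pi=0$, whence $c_0=0$, contradicting nontriviality. Therefore $\{\pi\}\cup\{\arccos\frac1{\varphi(r)}:r\mid m,\ r>3\}$ is linearly independent over $\Ql$, with the decisive step being the distinctness of the square-free parts of $\varphi(r)^2-1$.
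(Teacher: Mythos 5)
Your overall strategy is the same as the paper's: interpret each $\theta_r=\arccos\frac{1}{\varphi(r)}$ as a pure geodetic angle with $\tan\theta_r=\sqrt{\varphi(r)^2-1}\ge\sqrt{15}$, use Theorem~\ref{geo2} to split a putative rational relation into square-root classes, and use Theorem~\ref{geo3} to kill any class that is a singleton. That part of your write-up is correct, and your observation that the set is really indexed by distinct totient values (so two divisors with equal totients cause no harm) is a point the paper glosses over. But the lemma lives or dies with the statement you yourself call ``the main obstacle'': that distinct even totient values $k\neq k'$ arising from divisors of $m$ satisfy $\sq(k^2-1)\neq\sq(k'^2-1)$. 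You do not prove it; you list ingredients you hope would prove it ($2$-adic valuations, the multiplicative shape of totients, multiplicative independence in $\Ql(\sqrt{-d})$) and concede that carrying this out ``is the delicate point.'' Since every other step is routine, this is not a minor omission: the unproven claim \emph{is} the lemma, and your proposal therefore has a genuine gap.

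For comparison, the paper closes this step with an elementary argument: writing $a=k+1$, $b=k-1$, $c=k'+1$, $d=k'-1$ (all odd), equality of the square-free parts makes $abcd$ a perfect square, whence $\sq(ac)=\sq(bd)=:t_1$ and $\sq(ad)=\sq(bc)=:t_2$; since $t_1\mid ac+bd$, $t_2\mid ad+bc$, and $(ac+bd)-(ad+bc)=(a-b)(c-d)=4$, oddness forces $\gcd(t_1,t_2)=1$, from which the paper concludes that $a=\gcd(ac,ad)$ and likewise $b$ are perfect squares --- impossible, as $a-b=2$. You should also know that your instinct about delicacy is vindicated at precisely this point: the paper's final inference (that $\gcd(t_1,t_2)=1$ forces $a$ to be a square) uses only that $k,k'$ are even, and for arbitrary even values that inference is false --- take $k=4$, $k'=244$, where $4^2-1=15$ and $244^2-1=15\cdot 63^2$ share square-free part $15$, so $t_1=1$, $t_2=15$, and yet $a=5$ is not a square. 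This does not contradict the lemma itself, since $244$ is not the totient of any odd square-free integer, but it shows that a correct proof must exploit more than parity of totients, i.e.\ the coincidence you flagged as ``Pell-type'' cannot be ruled out by the stated bookkeeping alone. So the gap you left open is real, it is exactly where the mathematical content of the lemma sits, and it is not as easily filled as the paper's own terse argument suggests.
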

	\begin{proof}
		Let $\theta_r=\arccos\frac{1}{\varphi(r)}$ for each divisors $r$ of $m$ with $r>3$. Then $\tan\theta_r=\sqrt{\phi(r)^2-1}\geq \sqrt{15}$ as $\varphi(r)\geq 4$. 
		We first show that for any distinct divisors  $r$ and $s$ of $m$ with $r>3$ and $s>3$, the square-free parts of $\varphi(r)^2-1$ and $\varphi(s)^2-1$ are distinct. On the contrary, suppose the square-free parts of $\varphi(r)^2-1$ and $\varphi(s)^2-1$ are equal. Define  
		\[a=\varphi(r)+1,\quad b=\varphi(r)-1,\quad c=\varphi(s)+1\quad\text{and}~~ d=\varphi(s)-1.\]
		Note that $a,b,c$ and $d$ are odd. Now $abcd=\left(\varphi(r)^2-1\right)\left(\varphi(s)^2-1\right)$, and hence $abcd$ is a perfect square. Therefore $ac$ and $bd$ have the same square-free part, say $t_1$. Similarly, $ad$ and $bc$ also have the same square-free part, say $t_2$. 
		Since $t_1\mid ac+bd$ and $t_2\mid ad+bc$, it follows that 
		\[\gcd(t_1,t_2)\mid (ac+bd)-(ad+bc). \]
		Note that
		\[4=(a-b)(c-d)=(ac+bd)-(ad+bc).\]
		Therefore $\gcd(t_1,t_2)\in\{1,2,4\}$. Suppose $ \gcd(t_1, t_2) \in \{2, 4\} $. Then $ 2 \mid t_1 $, implying that $ac$ is even, which is not possible. Hence $ \gcd(t_1, t_2) = 1 $.  This implies that $\gcd(ac,ad)$ is a perfect square. Note that $\gcd(c,d)=\gcd(c-d,d)=\gcd(2,d)=1$, and so $\gcd(ac,ad)=a$. Therefore $a$ is a perfect square. Similarly, $b$ is also a perfect square. But $a-b=2$, which contradicts the fact that the difference of two perfect squares cannot be exactly $2$. Thus for any distinct divisors  $r$ and $s$ of $m$ with $r>3$ and $s>3$, the square-free parts of $\varphi(r)^2-1$ and $\varphi(s)^2-1$ are distinct.
		
		Suppose for  $\{0\}\neq \{q\}\cup\{q_r: r\mid m~\text{with}~r>3\}\subset\Ql$, we have
		\begin{equation}\label{geo1}
			\sum_{\substack{r\mid m\\ r>3}} q_r\theta_r=q\pi.
		\end{equation}
		Then by Theorem~\ref{geo2}, each $\theta_r$ is a rational multiple of $\pi$. Therefore by Theorem~\ref{geo3}, we have 
		\begin{equation*}\label{nn}
			\tan\theta_r\in\left\{0,\pm\sqrt{3},\pm\frac{1}{\sqrt{3}},\pm1\right\},
		\end{equation*}
		which is not possible as $\tan\theta_r\geq\sqrt{15}$. Thus \eqref{geo1} cannot hold. This completes the proof.
	\end{proof}
	
	\begin{lema}\label{pgst_uc2}
		Let $ n = 2m $, where $m$ is a square-free odd integer. Then the unitary Cayley graph $G_{\Zl_n}$ exhibits PGST.
	\end{lema}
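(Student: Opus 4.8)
The plan is to verify the circulant criterion of Corollary~\ref{pgst_circulant} for the pair $u=0$ and $v=m$. Since $n=2m$ is even, $w:=v-u=m=\tfrac n2$ is the unique element of order $2$ in $\Zl_n$, so condition~(i) of the corollary holds automatically; it therefore suffices to establish PGST between $0$ and $m$. Everything then reduces to the Kronecker-type condition~(ii): for every integer tuple $\{\ell_r:0\le r\le n-1\}$ with $\sum_{r=0}^{n-1}\ell_r\arccos\mu_r\equiv 0\pmod{2\pi}$, one must force $\sum_{r\text{ odd}}\ell_r\equiv 0\pmod 2$.

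First I would exploit that $\mu_r$ depends only on $\gcd(r,n)$, grouping the coefficients by divisor class: set $L_d=\sum_{\gcd(r,n)=d}\ell_r$ for each $d\mid n$. Because $n=2m$ with $m$ odd, a residue $r$ is odd exactly when $\gcd(r,n)$ is odd, and the odd divisors of $n$ are precisely the divisors of $m$; hence $\sum_{r\text{ odd}}\ell_r\equiv\sum_{d\mid m}L_d\pmod 2$, and the whole of condition~(ii) descends to a statement about the integers $L_d$. Next I would use the explicit eigenvalues recorded before the lemma: every divisor of $n$ is either $m/f$ (odd) or $2m/f$ (even) for a divisor $f\mid m$, and these two give eigenvalues $-\mu(f)/\varphi(f)$ and $\mu(f)/\varphi(f)$ (with $\mu$ the M\"obius function), which are negatives of one another. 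Writing $L^{\mathrm{odd}}_f,L^{\mathrm{even}}_f$ for the corresponding coefficients and $\theta_f:=\arccos\frac1{\varphi(f)}$, the two $\arccos$ values of such a pair sum to $\pi$ via $\arccos(-x)=\pi-\arccos x$.

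The core step is to expand $\sum_{d\mid n}L_d\arccos\mu^{(d)}$ in the system $\{\pi\}\cup\{\theta_f:f\mid m\}$. Separating the rational-angle contributions $\theta_1=0$ and $\theta_3=\pi/3$ (present when $3\mid m$) from the remaining $\theta_f$ with $f>3$, the hypothesis becomes
\[
\pi\Bigl(B-\tfrac13\Delta-2N\Bigr)+\sum_{\substack{f\mid m\\ f>3}}\mu(f)\,(L^{\mathrm{even}}_f-L^{\mathrm{odd}}_f)\,\theta_f=0,
\]
where $B\in\Zl$ collects the integer $\pi$-coefficients, $\Delta=L^{\mathrm{even}}_3-L^{\mathrm{odd}}_3$, and $N$ absorbs the right-hand $2\pi$. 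Lemma~\ref{li_arccos} asserts exactly that $\{\pi\}\cup\{\theta_f:f\mid m,\ f>3\}$ is linearly independent over $\Ql$, so every coefficient must vanish: $L^{\mathrm{even}}_f=L^{\mathrm{odd}}_f$ for all $f>3$, and $B-\tfrac13\Delta=2N$. A short bookkeeping computation then gives $\sum_{d\mid m}L_d=\sum_{f\mid m}L^{\mathrm{odd}}_f=B-\Delta=2N-\tfrac23\Delta$, which I would conclude is even, thereby verifying condition~(ii) and hence PGST.

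The main obstacle I anticipate is the careful treatment of the prime $3$. Its angle $\theta_3=\pi/3$ is a rational multiple of $\pi$ and so is not protected by the independence Lemma~\ref{li_arccos}; it must instead be folded into the $\pi$-coefficient, where the requirement that this coefficient be a genuine integer forces $3\mid\Delta$. This divisibility is precisely what cancels the fractional term $-\tfrac23\Delta$ and keeps the target sum even. Getting the parity bookkeeping and the signs $\mu(f)=\pm1$ to align correctly is the delicate part, while the rest is routine divisor counting together with the identity $\arccos(-x)=\pi-\arccos x$.
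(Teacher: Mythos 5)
Your proposal is correct and follows essentially the same route as the paper's proof: verify condition (ii) of Corollary~\ref{pgst_circulant} by grouping the coefficients $\ell_j$ according to $\gcd(j,n)$, rewrite the arccosines of negative eigenvalues via $\arccos(-x)=\pi-\arccos x$, apply the linear independence of Lemma~\ref{li_arccos} to kill the coefficients of the $\theta_f$ with $f>3$, and fold the exceptional divisor $3$ (angle $\pi/3$) into the $\pi$-coefficient, whose integrality forces $3\mid\Delta$ and hence the required parity. The only difference is organizational: your M\"obius-function parametrization by $f\mid m$ treats $3\mid m$ and $3\nmid m$ (and the paper's further sub-cases on the parity of $k$) in one uniform computation, whereas the paper carries out four separate case analyses with subset notation $P_S$, $(-1)^{|S|}$.
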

	\begin{proof}
		Suppose $\ell_0,\hdots,\ell_{n-1}\in\Zl$ such that
		\begin{equation}\label{marc}
			\sum_{j=0}^{n-1} \ell_j \arccos \mu_j \equiv 0 \pmod{2\pi}.
		\end{equation}
		Let the prime factorization of $m$ be given by $m=p_1\cdots p_k$. We consider two cases according as $3$ is a factor of $m$ or not.
		
		\noindent \textbf{Case 1.} Let  $3$ be not a factor of $m$, that is,  $ p_i > 3 $ for each $i$. First assume that $k$ is even. Then
		\begin{align*}
			\sum_{j=0}^{n-1} \ell_j \arccos \mu_j 
			&= \sum_{S \subseteq \{1,\dots,k\}} 
			\left( \sum_{j \in I_{P_S}} \ell_j \arccos \mu_j 
			+ \sum_{j \in I_{2P_S}} \ell_j \arccos \mu_j \right) \\
			&= \sum_{\substack{S \subseteq \{1,\dots,k\} \\ |S| < k \\ |S| \text{ even}}}
			\left( \sum_{j \in I_{P_S}} \ell_j \left(\pi - \arccos\frac{1}{\varphi(P_{\overline{S}})} \right)
			+ \sum_{j \in I_{2P_S}} \ell_j \arccos\frac{1}{\varphi(P_{\overline{S}})} \right) \\
			&\quad + \sum_{\substack{S \subseteq \{1,\dots,k\} \\ |S| < k \\ |S| \text{ odd}}}
			\left( \sum_{j \in I_{P_S}} \ell_j \arccos\frac{1}{\varphi(P_{\overline{S}})}
			+ \sum_{j \in I_{2P_S}} \ell_j \left(\pi - \arccos\frac{1}{\varphi(P_{\overline{S}})} \right) \right) \\
			&\quad +  \ell_m\arccos(-1)+ \ell_{0}\arccos(1).
		\end{align*}
		
		Now \eqref{marc} implies
		\begin{align*}
			&\sum_{\substack{S \subseteq \{1,\dots,k\} \\ |S| < k \\ |S| \text{ even}}}
			\left( \sum_{j \in I_{P_S}} \ell_j \left(\pi - \arccos\frac{1}{\varphi(P_{\overline{S}})} \right)
			+ \sum_{j \in I_{2P_S}} \ell_j \arccos\frac{1}{\varphi(P_{\overline{S}})} \right) \\
			&\quad + \sum_{\substack{S \subseteq \{1,\dots,k\} \\ |S| < k \\ |S| \text{ odd}}}
			\left( \sum_{j \in I_{P_S}} \ell_j \arccos\frac{1}{\varphi(P_{\overline{S}})}
			+ \sum_{j \in I_{2P_S}} \ell_j \left(\pi - \arccos\frac{1}{\varphi(P_{\overline{S}})} \right) \right) \\
			&\quad +  \ell_m \pi = 2t\pi \quad \text{for some } t \in \mathbb{Z}.
		\end{align*}
		
		Rearranging the terms, we have
		\begin{align*}
			&\sum_{\substack{S \subseteq \{1,\dots,k\} \\ |S| < k \\ |S| \text{ even}}}
			\left( \sum_{j \in I_{2P_S}} \ell_j - \sum_{j \in I_{P_S}} \ell_j \right) \arccos\frac{1}{\varphi(P_{\overline{S}})}\\
			&\quad + \sum_{\substack{S \subseteq \{1,\dots,k\} \\ |S| < k \\ |S| \text{ odd}}}
			\left( \sum_{j \in I_{P_S}} \ell_j - \sum_{j \in I_{2P_S}} \ell_j \right) 	\arccos\frac{1}{\varphi(P_{\overline{S}})} \\
			&\quad+ \pi \left( 
			\sum_{\substack{S \subseteq \{1,\dots,k\} \\ |S| < k \\ |S| \text{ even}}} 
			\sum_{j \in I_{P_S}} \ell_j 
			+ \sum_{\substack{S \subseteq \{1,\dots,k\} \\ |S| < k \\ |S| \text{ odd}}} 
			\sum_{j \in I_{2P_S}} \ell_j 
			+ \ell_m - 2t \right) = 0.
		\end{align*}
		
		Then Lemma~\ref{li_arccos} gives that
		\begin{equation}\label{lj1}
			\sum_{j \in I_{2P_S}} \ell_j = \sum_{j \in I_{P_S}} \ell_j 
			\quad \text{for each proper subset } S ~\text{of}~ \{1,\dots,k\}
		\end{equation}
		and
		\begin{equation}\label{lj2}
			\sum_{\substack{S \subseteq \{1,\dots,k\} \\ |S| < k \\ |S| \text{ even}}} \sum_{j \in I_{P_S}} \ell_j 
			+ \sum_{\substack{S \subseteq \{1,\dots,k\} \\ |S| < k \\ |S| \text{ odd}}} \sum_{j \in I_{2P_S}} \ell_j 
			+  \ell_m = 2t.
		\end{equation}
		
		Using \eqref{lj1} and \eqref{lj2}, we conclude that
		\[
		\sum_{\substack{j=0 \\ j \text{ odd}}}^{n-1} \ell_j 
		= \sum_{S \subseteq \{1,\dots,k\}} \sum_{j \in I_{P_S}} \ell_j = 2t\equiv0\pmod 2.
		\]
		
		Now suppose that $k$ is odd. Then
		\begin{align*}
			\sum_{j=0}^{n-1} \ell_j \arccos \mu_j 
			&= \sum_{\substack{S \subseteq \{1,\dots,k\} \\ |S| < k \\ |S| \text{ even}}}
			\left( \sum_{j \in I_{P_S}} \ell_j \arccos\frac{1}{\varphi(P_{\overline{S}})} 
			+ \sum_{j \in I_{2P_S}} \ell_j \left( \pi - \arccos\frac{1}{\varphi(P_{\overline{S}})} \right) \right) \\
			&\quad + \sum_{\substack{S \subseteq \{1,\dots,k\} \\ |S| < k \\ |S| \text{ odd}}}
			\left( \sum_{j \in I_{P_S}} \ell_j \left( \pi - \arccos\frac{1}{\varphi(P_{\overline{S}})} \right) 
			+ \sum_{j \in I_{2P_S}} \ell_j \arccos\frac{1}{\varphi(P_{\overline{S}})} \right) \\
			&\quad +  \ell_m \arccos(-1)+ \ell_{0}\arccos(1).
		\end{align*}
		Proceeding analogously as in the even case, we again obtain
		\[
		\sum_{\substack{j=0 \\ j \text{ odd}}}^{n-1} \ell_j \equiv 0 \pmod{2}.
		\]
		\noindent \textbf{Case 2.} Let $ p_1 = 3 $. 	First assume $k$ is even. Then
		\begin{align*}
			\sum_{j=0}^{n-1} \ell_j \arccos \mu_j 
			&= \sum_{S \subseteq \{1,\dots,k\}} 
			\left( \sum_{j \in I_{P_S}} \ell_j \arccos \mu_j 
			+ \sum_{j \in I_{2P_S}} \ell_j \arccos \mu_j \right) \\
			&= \sum_{\substack{S \subseteq \{1,\dots,k\} \\ |S| \text{ even and } |S| < k \\ S\neq\{2,\hdots,k\}}}
			\left( \sum_{j \in I_{P_S}} \ell_j \left(\pi - \arccos\frac{1}{\varphi(P_{\overline{S}})} \right)
			+ \sum_{j \in I_{2P_S}} \ell_j \arccos\frac{1}{\varphi(P_{\overline{S}})} \right) \\
			&\quad + \sum_{\substack{S \subseteq \{1,\dots,k\} \\ |S| \text{ odd and } |S| < k \\ S\neq\{2,\hdots,k\}}}
			\left( \sum_{j \in I_{P_S}} \ell_j \arccos\frac{1}{\varphi(P_{\overline{S}})}
			+ \sum_{j \in I_{2P_S}} \ell_j \left(\pi - \arccos\frac{1}{\varphi(P_{\overline{S}})} \right) \right) \\
			&\quad +  \sum_{S=\{2,\hdots,k\}} \left(\sum_{j\in I_{P_S}}\ell_j \arccos\left(\frac{1}{2}\right) + \sum_{j\in I_{2P_S}}\ell_j\arccos\left(-\frac{1}{2}\right)  \right)\\
			&\quad + \ell_m\arccos(-1) +  \ell_{0}\arccos(1).
		\end{align*}
		Now equation~\eqref{marc} implies that
		\begin{align*}
			&\sum_{\substack{S \subseteq \{1,\dots,k\} \\ |S| \text{ even and } |S| < k \\ S\neq\{2,\hdots,k\}}}
			\left( \sum_{j \in I_{2P_S}} \ell_j - \sum_{j \in I_{P_S}} \ell_j \right) \arccos\frac{1}{\varphi(P_{\overline{S}})}
			+  \sum_{\substack{S \subseteq \{1,\dots,k\} \\ |S| \text{ odd and } |S| < k \\ S\neq\{2,\hdots,k\}}}
			\left( \sum_{j \in I_{P_S}} \ell_j - \sum_{j \in I_{2P_S}} \ell_j \right) 	\arccos\frac{1}{\varphi(P_{\overline{S}})} \\
			&\quad+ \pi \left( 
			\sum_{\substack{S \subseteq \{1,\dots,k\} \\ |S| \text{ even and } |S| < k \\ S\neq\{2,\hdots,k\}}}
			\sum_{j \in I_{P_S}} \ell_j 
			+  \sum_{\substack{S \subseteq \{1,\dots,k\} \\ |S| \text{ odd and } |S| < k \\ S\neq\{2,\hdots,k\}}}
			\sum_{j \in I_{2P_S}} \ell_j 
			+\sum_{S=\{2,\hdots,k\}} \left(\sum_{j\in I_{P_S}} \frac{\ell_j}{3} + \sum_{j\in I_{2P_S}}\frac{2\ell_j}{3}  \right) + \ell_m - 2t \right)\\
			&= 0 \quad \text{for some } t \in \mathbb{Z}.
		\end{align*}
		Therefore Lemma~\ref{li_arccos} gives
		\begin{equation}\label{lj3}
			\sum_{j \in I_{2P_S}} \ell_j = \sum_{j \in I_{P_S}} \ell_j 
			\quad \text{for each proper subset } S ~\text{of}~ \{1,\dots,k\}~\text{with} ~S\neq \{2,\hdots,k\}
		\end{equation}
		and
		\begin{equation}\label{lj4}
			\sum_{\substack{S \subseteq \{1,\dots,k\} \\ |S| \text{ even and } |S| < k \\ S\neq\{2,\hdots,k\}}}
			\sum_{j \in I_{P_S}} 3\ell_j 
			+  \sum_{\substack{S \subseteq \{1,\dots,k\} \\ |S| \text{ odd and } |S| < k \\ S\neq\{2,\hdots,k\}}}
			\sum_{j \in I_{2P_S}} 3\ell_j 
			+\sum_{S=\{2,\hdots,k\}} \left(\sum_{j\in I_{P_S}} \ell_j + \sum_{j\in I_{2P_S}}2\ell_j  \right) + 3\ell_m= 6t.
		\end{equation}	
		From \eqref{lj3} and \eqref{lj4}, we conclude that
		\[
		\sum_{\substack{j=0 \\ j \text{ odd}}}^{n-1} \ell_j 
		= \sum_{S \subseteq \{1,\dots,k\}} \sum_{j \in I_{P_S}} \ell_j = 2\left[3t-\sum_{\substack{S\subseteq \{1,\hdots,k\}\\ |S|\neq \{2,\hdots,k\}}}\sum_{j\in P_S}\ell_j-\sum_{S=\{2,\hdots,k\}}\sum_{j\in I_{2P_S}}\ell_j  \right],
		\]
		and hence
		\[
		\sum_{\substack{j=0 \\ j \text{ odd}}}^{n-1} \ell_j \equiv 0 \pmod{2}.
		\]
		The same argument holds for odd $k$.
		Thus in all cases
		\[
		\sum_{\substack{j=0 \\ j \text{ odd}}}^{n-1} \ell_j \equiv 0 \pmod{2}.
		\]
		Hence the proof is complete by Corollary~\ref{pgst_circulant}.
	\end{proof}
	Let $ n = 4m$, where $m=p_1 \cdots p_k$ for distinct odd primes $p_1,\hdots, p_k$. Then the positive divisors of $n$ are precisely the numbers of the form $P_S$ or $2P_S$ or $4P_S$ for some $S\subseteq\{1,\hdots,k\}$.
	
	Therefore for $j\in\{0,\hdots,n-1\}$, the discriminant eigenvalues of $G_{\Zl_n}$ are  given by
	\[
	\mu_j=
	\begin{cases}
		0&\text{ if } \gcd(j,n)=P_S\\
		\frac{(-1)^{|\overline{S}|+1}}{\varphi(P_{\overline{S}})}&\text{ if }\gcd(j,n)=2P_S \\
		\frac{(-1)^{|\overline{S}|}}{\varphi(P_{\overline{S}})}& \text{ if }\gcd(j,n)=4P_S
	\end{cases}
	\]
	for some $S\subseteq\{1,\hdots,k\}$.
	\begin{lema}\label{pgst_uc3}
		Let $ n = 4m $, where $m$ is a square-free odd integer. Then the unitary Cayley graph $G_{\Zl_n}$ exhibits PGST.
	\end{lema}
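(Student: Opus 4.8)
The plan is to follow the same route as in Lemma~\ref{pgst_uc2} and verify the two conditions of Corollary~\ref{pgst_circulant}. Since $n/2 = 2m$ has order $2$ in $\Zl_n$, condition (i) singles out the pair $(0,2m)$, so it suffices to establish the number-theoretic condition (ii) for PGST between $0$ and $2m$. Accordingly, I would start from an arbitrary integer relation $\sum_{j=0}^{n-1}\ell_j\arccos\mu_j\equiv 0\pmod{2\pi}$ and aim to deduce $\sum_{j\text{ odd}}\ell_j\equiv 0\pmod 2$.

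The key structural simplification, and what makes the $4m$ case cleaner than the $2m$ case, is that an index $j$ is odd if and only if $\gcd(j,n)=P_S$ for some $S\subseteq\{1,\dots,k\}$ (because $2\mid n$ forces $\gcd(j,n)$ to be even whenever $j$ is even), and for every such $j$ the corresponding eigenvalue is $\mu_j=0$, whence $\arccos\mu_j=\tfrac{\pi}{2}$. Thus the odd indices contribute exactly $\tfrac{\pi}{2}\sum_{j\text{ odd}}\ell_j$ to the relation, a pure rational multiple of $\pi$. The remaining (even) indices split into the classes $\gcd(j,n)=2P_S$ and $\gcd(j,n)=4P_S$, whose eigenvalues are negatives of one another; using $\arccos(-x)=\pi-\arccos x$ I would pair these two classes for each fixed $S$, collecting the coefficient of the single angle $\theta_S:=\arccos\frac{1}{\varphi(P_{\overline S})}$ together with an integer multiple of $\pi$, while keeping track of the sign coming from the parity of $|\overline S|$.

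Next I would invoke the linear independence of $\{\pi\}\cup\{\theta_S: P_{\overline S}\mid m,\ P_{\overline S}>3\}$ over $\Ql$ from Lemma~\ref{li_arccos}. For the degenerate value $P_{\overline S}=1$ (where $\theta_S=0$) the term drops out, so when $3\nmid m$ (Case 1) every surviving $\theta_S$ is a genuine independent angle; linear independence then forces the total coefficient of $\pi$ — which contains the contribution $\tfrac12\sum_{j\text{ odd}}\ell_j$ together with integer terms from the even classes — to be an even integer. Since those integer terms are integers, this immediately yields that $\tfrac12\sum_{j\text{ odd}}\ell_j$ is an integer, i.e. $\sum_{j\text{ odd}}\ell_j\equiv 0\pmod 2$. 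When $3\mid m$ (Case 2), the divisor $P_{\overline S}=3$ produces $\theta_S=\arccos\tfrac12=\tfrac{\pi}{3}$, which is itself a rational multiple of $\pi$ and must be folded into the $\pi$-coefficient; clearing the resulting denominators (multiplying through by $6$) and using that $3$ is odd recovers the same parity conclusion.

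Having verified condition (ii) of Corollary~\ref{pgst_circulant} in all cases, PGST occurs between $0$ and $2m$. I expect the main obstacle to be the bookkeeping in the pairing step: correctly assembling, for each $S$, the net coefficient of $\theta_S$ from the $2P_S$ and $4P_S$ classes with the $(-1)^{|\overline S|}$ sign conventions, and isolating the degenerate angles $P_{\overline S}\in\{1,3\}$ so that Lemma~\ref{li_arccos} applies to exactly the genuinely independent angles. The $\tfrac{\pi}{2}$ contribution of the odd (zero-eigenvalue) indices is what ultimately supplies the half-integer whose forced integrality delivers the required even sum.
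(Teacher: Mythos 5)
Your proposal is correct and follows essentially the same route as the paper's proof: reduce to condition (ii) of Corollary~\ref{pgst_circulant} for the pair $(0,2m)$, observe that odd indices are exactly those with odd $\gcd(j,n)=P_S$ and eigenvalue $0$ (contributing $\tfrac{\pi}{2}$ each), pair the $2P_S$ and $4P_S$ classes via $\arccos(-x)=\pi-\arccos x$, and apply Lemma~\ref{li_arccos} so that the $\pi$-coefficient forces $\tfrac12\sum_{j\text{ odd}}\ell_j$ to be an integer, with the $\arccos(\pm\tfrac12)$ angles folded into the $\pi$-part and denominators cleared when $3\mid m$. The only difference is presentational: you track signs uniformly through $(-1)^{|\overline S|}$ where the paper writes out the subcases $k$ even and $k$ odd explicitly.
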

	\begin{proof}
		Let $\ell_0,\hdots,\ell_{n-1}\in\Zl$ such that
		\begin{equation}\label{marc1}
			\sum_{j=0}^{n-1} \ell_j \arccos \mu_j \equiv 0 \pmod{2\pi}.
		\end{equation}
		Let the prime factorization of $m$ be given by $m=p_1\cdots p_k$. We consider two cases according as $3$ is a factor of $m$ or not.
		
		\noindent \textbf{Case 1.} $3$ is not a factor of $m$, that is, $ p_i > 3 $ for each $i$. First assume that $k$ is even. Then
		\begin{align*}
			\sum_{j=0}^{n-1} \ell_j \arccos \mu_j 
			&= \sum_{S \subseteq \{1,\dots,k\}} 
			\left( \sum_{j \in I_{P_S}} \ell_j \arccos \mu_j 
			+ \sum_{j \in I_{2P_S}} \ell_j \arccos \mu_j + \sum_{j \in I_{4P_S}} \ell_j \arccos \mu_j\right)  \\
			&=\sum_{S \subseteq \{1,\dots,k\}}  \sum_{j \in I_{P_S}} \ell_j \arccos(0) +\ell_{2m}\arccos(-1) 
			+  \ell_{0}\arccos(1)\\
			&\quad+\sum_{\substack{S \subseteq \{1,\dots,k\} \\ |S| < k \\ |S| \text{ even}}}
			\left( \sum_{j \in I_{2P_S}} \ell_j \left(\pi-\arccos\frac{1}{\varphi(P_{\overline{S}})}\right) + \sum_{j \in I_{4P_S}} \ell_j \arccos\frac{1}{\varphi(P_{\overline{S}})} \right) \\
			&\quad + \sum_{\substack{S \subseteq \{1,\dots,k\} \\ |S| < k \\ |S| \text{ odd}}}
			\left( \sum_{j \in I_{2P_S}} \ell_j \arccos\frac{1}{\varphi(P_{\overline{S}})}
			+ \sum_{j \in I_{4P_S}} \ell_j \left(\pi - \arccos\frac{1}{\varphi(P_{\overline{S}})} \right) \right).
		\end{align*}
		Then from~\eqref{marc1}, it follows that
		\begin{align*}
			&\sum_{\substack{S \subseteq \{1,\dots,k\} \\ |S| < k \\ |S| \text{ even}}}
			\left( \sum_{j \in I_{4P_S}} \ell_j - \sum_{j \in I_{2P_S}} \ell_j \right) \arccos\frac{1}{\varphi(P_{\overline{S}})}+ \sum_{\substack{S \subseteq \{1,\dots,k\} \\ |S| < k \\ |S| \text{ odd}}}
			\left( \sum_{j \in I_{2P_S}} \ell_j - \sum_{j \in I_{4P_S}} \ell_j \right) 	\arccos\frac{1}{\varphi(P_{\overline{S}})}\\
			&\quad+ \pi \left( \sum_{S \subseteq \{1,\dots,k\}}  \sum_{j \in I_{P_S}} \frac{\ell_j}{2} +
			\sum_{\substack{S \subseteq \{1,\dots,k\} \\ |S| < k \\ |S| \text{ even}}} 
			\sum_{j \in I_{2P_S}} \ell_j 
			+ \sum_{\substack{S \subseteq \{1,\dots,k\} \\ |S| < k \\ |S| \text{ odd}}} 
			\sum_{j \in I_{4P_S}} \ell_j 
			+ \ell_{2m}- 2t \right)=0,
		\end{align*}
		for some integer $t$.
		
		Therefore by Lemma~\ref{li_arccos}, we have
		\begin{equation*}\label{lj22}
			\sum_{S \subseteq \{1,\dots,k\}}  \sum_{j \in I_{P_S}} \ell_j +
			\sum_{\substack{S \subseteq \{1,\dots,k\} \\ |S| < k \\ |S| \text{ even}}} 
			\sum_{j \in I_{2P_S}} 2\ell_j 
			+ \sum_{\substack{S \subseteq \{1,\dots,k\} \\ |S| < k \\ |S| \text{ odd}}} 
			\sum_{j \in I_{4P_S}} 2\ell_j 
			+ 2\ell_{2m}= 4t.
		\end{equation*}
		Hence
		\[
		\sum_{\substack{j=0 \\ j \text{ odd}}}^{n-1} \ell_j 
		= \sum_{S \subseteq \{1,\dots,k\}} \sum_{j \in I_{P_S}} \ell_j \equiv0\pmod2.
		\]
		A similar argument holds for odd $k$.

		\noindent \textbf{Case 2.} Let $ p_1 = 3 $. First assume $k$ is even. Then
		\begin{align}
			\sum_{j=0}^{n-1} \ell_j \arccos \mu_j 
			&= \sum_{S \subseteq \{1,\dots,k\}} 
			\left( \sum_{j \in I_{P_S}} \ell_j \arccos \mu_j 
			+ \sum_{j \in I_{2P_S}} \ell_j \arccos \mu_j +\sum_{j \in I_{4P_S}} \ell_j \arccos \mu_j \right) \nonumber\\
			&=\sum_{S \subseteq \{1,\dots,k\}}  \sum_{j \in I_{P_S}} \ell_j \arccos(0)+ \ell_{2m}\arccos(-1)	+  \ell_{0} \arccos(1) \nonumber\\ 
			&\quad+\sum_{\substack{S \subseteq \{1,\dots,k\} \\ |S| \text{ even and } |S| < k \\ S\neq\{2,\hdots,k\}}}
			\left( \sum_{j \in I_{2P_S}} \ell_j \left(\pi - \arccos\frac{1}{\varphi(P_{\overline{S}})} \right)
			+ \sum_{j \in I_{4P_S}} \ell_j \arccos\frac{1}{\varphi(P_{\overline{S}})} \right)\nonumber \\
			&\quad + \sum_{\substack{S \subseteq \{1,\dots,k\} \\ |S| \text{ odd and } |S| < k \\ S\neq\{2,\hdots,k\}}}
			\left( \sum_{j \in I_{2P_S}} \ell_j \arccos\frac{1}{\varphi(P_{\overline{S}})}
			+ \sum_{j \in I_{4P_S}} \ell_j \left(\pi - \arccos\frac{1}{\varphi(P_{\overline{S}})} \right) \right)\nonumber\\
			&\quad+\sum_{S=\{2,\hdots,k\}} \left(\sum_{j\in I_{2P_S}}\ell_j \arccos\left(\frac{1}{2}\right) + \sum_{j\in I_{4P_S}}\ell_j\arccos\left(-\frac{1}{2}\right) \right) \label{nnnn}.
		\end{align}
		Using Lemma~\ref{li_arccos} in  \eqref{marc1} and \eqref{nnnn}, we have
		\begin{align}\label{lj6}
			&\sum_{S \subseteq \{1,\dots,k\}}  \sum_{j \in I_{P_S}} 3\ell_j +\sum_{S=\{2,\hdots,k\}} \left(\sum_{j\in I_{2P_S}} 2\ell_j + \sum_{j\in I_{4P_S}}4\ell_j  \right) + 6\ell_{2m}\nonumber\\
			&\quad+\sum_{\substack{S \subseteq \{1,\dots,k\} \\ |S| \text{ even and } |S| < k \\ S\neq\{2,\hdots,k\}}}
			\sum_{j \in I_{2P_S}} 6\ell_j 
			+  \sum_{\substack{S \subseteq \{1,\dots,k\} \\ |S| \text{ odd and } |S| < k \\ S\neq\{2,\hdots,k\}}}
			\sum_{j \in I_{4P_S}} 6\ell_j = 12t,
		\end{align}
		for some integer $t$.
		
		From \eqref{lj6}, we have
		\[
		\sum_{\substack{j=0 \\ j \text{ odd}}}^{n-1} \ell_j 
		= \sum_{S \subseteq \{1,\dots,k\}} \sum_{j \in I_{P_S}} \ell_j \equiv 0\pmod2.
		\]
		The same argument also holds for odd $k$.
		Thus in all cases
		\[
		\sum_{\substack{j=0 \\ j \text{ odd}}}^{n-1} \ell_j \equiv 0 \pmod{2}.
		\]
		Hence the result from Corollary~\ref{pgst_circulant}.
	\end{proof}
	Combining  Lemmas~\ref{pgst_uc1}, \ref{pgst_uc2} and \ref{pgst_uc3}, we now state the main result of this section.
	\begin{theorem}\label{pgst_unitary}
		The unitary Cayley graph $G_{\mathbb{Z}_n}$ exhibits PGST if and only if $n=2^\alpha m$ for some $\alpha\in\{1,2\}$ and some square-free odd positive integer $m$.
	\end{theorem}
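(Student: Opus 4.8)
The plan is to assemble the three preceding lemmas into a single biconditional, since all of the analytic and number-theoretic content has already been established; the work here is organizational. The key is to rewrite the target condition ``$n=2^\alpha m$ with $\alpha\in\{1,2\}$ and $m$ odd square-free'' in the equivalent arithmetic form ``$n$ is even, $v_2(n)\leq 2$, and the odd part of $n$ is square-free,'' and then match each clause against the available results. I would split the argument into the two directions in the usual way.

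For the backward direction, suppose $n=2^\alpha m$ with $\alpha\in\{1,2\}$ and $m$ odd square-free. If $\alpha=1$, then $n=2m$ and Lemma~\ref{pgst_uc2} yields PGST; if $\alpha=2$, then $n=4m$ and Lemma~\ref{pgst_uc3} yields PGST. These two subcases exhaust the hypothesis, so $G_{\Zl_n}$ exhibits PGST.

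For the forward direction, suppose $G_{\Zl_n}$ exhibits PGST, and I would rule out every $n$ not of the stated form. First, by Corollary~\ref{pgst_circulant}(i), PGST forces the transferring vertices to satisfy $v-u=\frac{n}{2}$, so $\frac{n}{2}$ must be an element of $\Zl_n$ and hence $n$ is even; this excludes all odd $n$. Next, Lemma~\ref{pgst_uc1}(i) shows that no square of an odd prime divides $n$, so the odd part of $n$ is square-free. Finally, Lemma~\ref{pgst_uc1}(ii) shows that the exponent $s=v_2(n)$ cannot satisfy $s\geq 3$. Combining this with evenness ($s\geq 1$) gives $s\in\{1,2\}$, and writing $m$ for the odd part of $n$ produces exactly $n=2^\alpha m$ with $\alpha\in\{1,2\}$ and $m$ odd square-free.

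The only delicate point—and the closest thing to an obstacle—is verifying that the case split is genuinely exhaustive: every positive integer is uniquely $2^{s}\cdot(\text{odd part})$, and the three exclusion results (odd $n$; an odd prime square dividing $n$; $v_2(n)\geq 3$) together cover precisely the complement of the target family, so nothing is missed and nothing is double-counted. Once this bookkeeping is confirmed, the equivalence is immediate.
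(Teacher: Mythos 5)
Your proposal is correct and takes essentially the same route as the paper, which proves the theorem simply by combining Lemmas~\ref{pgst_uc1}, \ref{pgst_uc2} and \ref{pgst_uc3}. In fact, your explicit exclusion of odd $n$ via Corollary~\ref{pgst_circulant} is a worthwhile addition: Lemma~\ref{pgst_uc1} by itself does not rule out odd square-free $n$, and the paper leaves that step implicit.
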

	It is known from \cite[Theorem~4.5]{bhakta1} that the graph $\Gn$ exhibits PST if and only if $n\in\{2,4,6,12\}$.  Thus Theorem~\ref{pgst_unitary} produces an infinite family of nontrivial unitary Cayley graphs exhibiting PGST.
	\section*{Acknowledgments}
	Koushik Bhakta gratefully acknowledges the support of the Prime Minister's Research Fellowship (PMRF), Government of India (PMRF-ID: 1903298).


\begin{thebibliography}{10}
		\bibitem{random}
		D.~Aharonov, L.~Davidvich, and N.~Zagury.
		\newblock Quantum random walks.
		\newblock {\em Physical Review A}. 48:1680--1690, 1993.
		
		\bibitem{algorithm}
		A.~Ambainis.
		\newblock Quantum walk algorithm for element distinctness.
		\newblock {\em SIAM Journal on Computing}. 37(1):210--239, 2007.
		
		
		\bibitem{bergen}
		J.~Bergen.
		\newblock A Concrete Approach to Abstract Algebra.
		\newblock {\em Academic Press}. 2009.
		
		\bibitem{bhakta1}
		K.~Bhakta and B.~Bhattacharjya.
		\newblock Grover walks on unitary Cayley graphs and integral regular graphs.
		\newblock {\em arXiv:2405.01020}. 2024.
		
		\bibitem{bhakta4}
		K.~Bhakta and B.~Bhattacharjya.
		\newblock Perfect state transfer in Grover walks on association schemes and distance-regular graphs.
		\newblock {\em arXiv:2405.01020}. 2025.
		
		\bibitem{bhakta3}
		K.~Bhakta and B.~Bhattacharjya.
		\newblock State transfer in Grover walks on unitary and quadratic unitary Cayley graphs over finite commutative rings.
		\newblock {\em arXiv:2502.10217}. 2025.
		
		\bibitem{pgst1}
		A.~Chan and H.~Zhan.
		\newblock Pretty good state transfer in discrete-time quantum walks.
		\newblock {\em Journal of Physics A: Mathematical and Theoretical}. 56:165305, 2023.
		
		\bibitem{chen}
		Q.~Chen, C.~Godsil, M.~Sobchuk, and H.~Zhan.
		\newblock Hamiltonians of bipartite walks.
		\newblock {\em The Electronic Journal of Combinatorics}. 31:\#P4.10, 2024.
		
		
		\bibitem{information}
		M.~Christandl, N.~Datta, A.~Ekert, and A.J.~Landahl.
		\newblock Perfect state transfer in quantum spin networks.
		\newblock {\em Physical Review Letters}. 92(18):187902, 2004.
		
		
		\bibitem{conway_angles}
		J.H.~Conway, C.~Radin, and L.~Sadun.
		\newblock On angles whose squared trigonometric functions are rational.
		\newblock {\em  Discrete $\&$ Computational Geometry}. 22:321--332, 1999.
		
		\bibitem{state}
		C.~Godsil.
		\newblock State transfer on graphs.
		\newblock {\em Discrete Mathematics}. 312(1):129--147, 2012.
		
		
		\bibitem{godsildct}
		C.~Godsil and H.~Zhan.
		\newblock Discrete-time quantum walks and graph structures.
		\newblock {\em Journal of Combinatorial Theory, Series A}. 167:181--212, 2019.
		
		
		\bibitem{kronecker}
		S.M.~Gonek and H.L.~Montgomery.
		\newblock Kronecker’s approximation theorem.
		\newblock {\em Indagationes Mathematicae}. 27(2):506--523, 2016.
		
		\bibitem{guo_orient}
		K.~Guo and V.~Schmeits.
		\newblock Perfect state transfer in quantum walks on orientable maps.
		\newblock {\em Algebraic Combinatorics}. 7(3):713--747, 2024.
		
		\bibitem{guo_peak}
		K.~Guo and V.~Schmeits.
		\newblock State transfer in discrete-time quantum walks via projected transition matrices.
		\newblock {\em arXiv:2411.05560}, 2025.
		
		\bibitem{spec}
		Y.~Higuchi, N.~Konno, I.~Sato, and E.~Segawa.
		\newblock Spectral and asymptotic properties of Grover walks on crystal lattices.
		\newblock {\em Journal of Functional Analysis}. 267:4197–4235, 2014.
		
		
		\bibitem{ito}
		N.~Ito, T.~Matsuyama, and T.~Tsurii.
		\newblock Periodicity of Grover walks on complete graphs with self-loops.
		\newblock {\em Linear Algebra and its Applications}. 599:121--132, 2020.
		
		
		\bibitem{ucg}
		W.~Klotz and T.~Sander.
		\newblock Some properties of unitary Cayley graphs.
		\newblock {\em The Electronic Journal of Combinatorics}. 14:\#R45, 2007.
		
		\bibitem{kubota_pst1}
		S.~Kubota and E.~Segawa.
		\newblock Perefct state transfer in Grover walks between states associated to vertices of a graph.
		\newblock {\em Linear Algebra and its Applications}. 646:238--251, 2022.
		
		\bibitem{mixed1}
		S.~Kubota, E.~Segawa, and T.~Taniguchi.
		\newblock Quantum walks defined by digraphs and generalized Hermitian adjacency matrices.
		\newblock {\em Quantum Information Processing}. 20:95, 2021.
		
		\bibitem{mixedpaths}
		S.~Kubota, H.~Seikdo, and H.~Yata.
		\newblock Periodicity of quantum walks defined by mixed paths and mixed cycles.
		\newblock {\em Linear Algebra and its Applications}. 630:15--38, 2021.
		
		\bibitem{kubota_pst3}
		S.~Kubota, H.~Seikdo, H.~Yata, and K.~Yoshino.
		\newblock Strongly regular and strongly walk-regular graphs that admit perfect state transfer.
		\newblock {\em arXiv:2506.02530}, 2025.
		
		
		\bibitem{regular}
		S.~Kubota, H.~Sekido, and H.~Yoshino.
		\newblock Regular graphs to induce even periodic Grover walks.
		\newblock {\em Discrete Mathematics}. 348(3):114345, 2025.	
		
		\bibitem{kubota_pst2}
		S.~Kubota and K.~Yoshino.
		\newblock Circulant graphs with valency up to $4$ that admit perfect state transfer in Grover walks.
		\newblock {\em Journal of Combinatorial Theory,	Series A}. 216:106064, 2025.
		
		\bibitem{localization}
		A.~Mandal, R.S.~Sarkar, and B.~Adhikari.
		\newblock Localization of two dimensional quantum walks defined by generalized Grover coins.
		\newblock {\em Journal of Physics A: Mathematical and Theoretical}. 56:025303, 2023.
		
		\bibitem{pal_pgst2}
		H.~Pal.
		\newblock More circulant graphs exhibiting pretty good state transfer.
		\newblock {\em Discrete Mathematics}. 341(4):889--895, 2018.
		
		
		
		\bibitem{pal_pgst1}
		H.~Pal and B.~Bhattacharjya.
		\newblock Pretty good state transfer on circulant graphs.
		\newblock {\em The Electronic Journal of Combinatorics}. 24(2):\#P2.23, 2017.
		
		
		\bibitem{ramanujan}
		S.~Ramanujan.
		\newblock On certain trigonometrical sums and their applications in the theory of numbers.
		\newblock {\em Transactions of the Cambridge Philosophical Society}. 22(13):259--276, 1918.
		
		\bibitem{dqw3}
		M. \v{S}tefa\v{n}\'ak and S. Skoup\'y.
		\newblock Perfect state transfer by means of discrete-time quantum walk on complete bipartite graphs.
		\newblock {\em Quantum Information Processing}. 16(3):72, 2017.
		
		\bibitem{rep}
		B.~Steinberg.
		\newblock Representation Theory of Finite Groups: An Introductory Approach.
		\newblock {\em Springer, New York}. 2009.
		
		\bibitem{crypto}
		C.~Vlachou, J.~Rodrigues, P.~Mateus, N.~Paunković, and A.~Souto.
		\newblock Quantum walk public-key cryptographic system.
		\newblock {\em International Journal of Quantum Information}. 13(07):1550050, 2015.
		
		
		\bibitem{odd-grover}
		Y.~Yoshie.
		\newblock Odd-periodic Grover walks.
		\newblock {\em Quantum Information Processing}. 22:316, 2023.
		
		\bibitem{distance}
		Y.~Yoshie.
		\newblock Periodicity of Grover walks on distance-regular graphs.
		\newblock {\em Graphs and Combinatorics}. 35:1305--1321, 2019.
		
		\bibitem{zhan1}
		H.~Zhan.
		\newblock An infinite family of circulant graphs with perfect state transfer in discrete quantum walks.
		\newblock {\em Quantum Information Processing}. 18(12):1--26, 2019.
		
		
		\bibitem{zhan4}
		H.~Zhan.
		\newblock Quantum walks on embeddings.
		\newblock {\em 	Journal of Algebraic Combinatorics}. 53:1187-1213, 2021.
		
		\bibitem{pgst2}
		H.~Zhan.
		\newblock Simple quantum coins enable pretty good state transfer on every hypercube.
		\newblock {\em 	arXiv:2412.20753}. 2024.
		
		
		\bibitem{zhan5}
		H.~Zhan.
		\newblock $\epsilon$-Uniform mixing in discrete quantum walks.
		\newblock {\em 	Journal of Algebraic Combinatorics}. 61:46, 2025.
		
		
		
	\end{thebibliography}
\end{document}